\definecolor{codegreen}{rgb}{0,0.6,0}
\definecolor{codegray}{rgb}{0.5,0.5,0.5}
\definecolor{codepurple}{rgb}{0.58,0,0.82}
\definecolor{backcolour}{rgb}{0.95,0.95,0.92}
\lstdefinestyle{mystyle}{
    backgroundcolor=\color{backcolour},   
    commentstyle=\color{codegreen},
    keywordstyle=\color{magenta},
    numberstyle=\tiny\color{codegray},
    stringstyle=\color{codepurple},
    basicstyle=\ttfamily\footnotesize,
    breakatwhitespace=false,         
    breaklines=true,                 
    captionpos=b,                    
    keepspaces=true,                 
    numbers=left,                    
    numbersep=5pt,                  
    showspaces=false,                
    showstringspaces=false,
    showtabs=false,                  
    tabsize=2
}
\numberwithin{equation}{section}
\newtheorem{theorem}{Theorem}[section]
\newtheorem{lemma}[theorem]{Lemma}
\newtheorem{proposition}[theorem]{Proposition}
\theoremstyle{definition}
\newtheorem{definition}[theorem]{Definition}
\newtheorem{remark}[theorem]{Remark}
\newtheorem{example}[theorem]{Example}
\newtheorem{assumption}[theorem]{Assumption}
\newcommand{\ind}{1\hspace{-2.1mm}{1}}
\newcommand{\eps}{\varepsilon}
\newcommand{\Cc}{\mathcal{C}}
\newcommand{\Ee}{\mathcal{E}}
\newcommand{\Hh}{\mathcal{H}}
\newcommand{\Hht}{\overline{\mathcal{H}}}
\newcommand{\Mm}{\mathcal{M}}
\newcommand{\Oo}{\mathcal{O}}
\newcommand{\Ss}{\mathcal{S}}
\newcommand{\Xx}{\mathcal{X}}
\newcommand{\Ig}{\mathtt{I}}
\newcommand{\Cg}{\mathtt{C}}
\newcommand{\Kg}{\mathtt{K}}
\newcommand{\Sg}{\mathtt{S}}
\newcommand{\Ag}{\mathtt{A}}
\newcommand{\Ug}{\mathtt{U}}
\newcommand{\Xg}{\mathtt{X}}
\newcommand{\Yg}{\mathtt{Y}}
\newcommand{\Zg}{\mathtt{Z}}
\newcommand{\Hg}{\mathtt{H}}
\newcommand{\Lg}{\mathtt{L}}
\newcommand{\Pg}{\mathtt{P}}
\newcommand{\Vg}{\mathtt{V}}
\newcommand{\half}{\frac{1}{2}}
\newcommand{\rhopm}{\rho}
\newcommand{\CC}{\mathbb{C}}
\newcommand{\DD}{\mathbb{D}}
\newcommand{\NN}{\mathbb{N}}
\newcommand{\RR}{\mathbb{R}}
\newcommand{\PP}{\mathbb{P}}
\newcommand{\VV}{\mathbb{V}}
\newcommand{\ZZ}{\mathbb{Z}}
\newcommand{\D}{\mathrm{d}}
\newcommand{\E}{\mathrm{e}}
\newcommand{\I}{\mathrm{i}}
\newcommand{\Tr}{\mathrm{Tr}}
\newcommand{\mf}{\mathfrak{m}}
\newcommand{\df}{\mathfrak{d}}
\newcommand{\pf}{\mathfrak{p}}
\newcommand{\qf}{\mathfrak{q}}
\newcommand{\Ff}{\mathcal{F}}
\newcommand{\omb}{\boldsymbol{\omega}}
\let\oldtocsection=\tocsection
\let\oldtocsubsection=\tocsubsection
\let\oldtocsubsubsection=\tocsubsubsection
\renewcommand{\tocsection}[2]{\hspace{0em}\oldtocsection{#1}{#2}}
\renewcommand{\tocsubsection}[2]{\hspace{2em}\oldtocsubsection{#1}{#2}}
\renewcommand{\tocsubsubsection}[2]{\hspace{4em}\oldtocsubsubsection{#1}{#2}}
\newcommand{\EE}{\mathbb{E}}
\newcommand{\oon}{o\left(\frac{1}{n}\right)}
\newcommand{\oonsq}{o\left(\frac{1}{\sqrt{n}}\right)}
\newcommand{\rhob}{\overline{\rho}}
\newcommand{\rhokz}{\rho_{k}^{0}}
\newcommand{\rhoko}{\rho_{k}^{1}}
\newcommand{\rhoieps}{\rho^{i,\eps}}
\newcommand{\rhokzeps}{\rho^{0,\eps}_{k}}
\begin{document}
\title{Stochastic limits of Quantum repeated measurements}
\author{Adeline Viot}
\address{Department of Mathematics, ENS Paris-Saclay}
\email{adeline.viot@ens-paris-saclay.fr}
\author{Antoine Jacquier}
\address{Department of Mathematics, Imperial College London}
\email{a.jacquier@imperial.ac.uk}
\author{Kostas Kardaras}
\address{Department of Statistics, London School of Economics}
\email{k.kardaras@lse.ac.uk}

\date{\today}
\thanks{We would like to thank Francesco Petruccione for enlightening discussions on collision models.
AJ is supported by the EPSRC Grant EP/W032643/1.}
\keywords{Belavkin equation, quantum measurement, stochastic Schr\"odinger, weak convergence}
\subjclass[2010]{45D05, 60F05, 60F17, 60H10, 81P68}
\maketitle

\begin{abstract}
We investigate quantum systems perturbed by noise in the form of repeated interactions between the system and the environment.
As the number of interactions (aka time steps) tends to infinity, we show, following the works by Pellegrini~\cite{pellegrini2008existence}
that this system converges to the solution of a Volterra stochastic differential equation.
This development sets  interesting future research paths at the intersection of quantum algorithms, stochastic differential equations, weak convergence and large deviations.
\end{abstract}
\tableofcontents


\section{Introduction}
The stochastic modelling of open quantum systems under continuous measurement has led to the development of a rich mathematical framework involving stochastic differential equations. Among these, the so-called Belavkin equations also known as stochastic Schrödinger equations~\cite{attal2006from,pellegrini_poisson2007,pellegrini2008existence} describe the random evolution of the quantum state of an open system undergoing continuous measurement. These equations capture the intrinsic randomness induced by quantum measurement and give rise to what are called \emph{quantum trajectories}.
An intuitive way to obtain such equations is through the \emph{repeated interaction model} in which the environment is modelled as a sequence of identical quantum systems, 
each of which interacts successively with the  system under consideration. 
After each interaction, a measurement is performed on the corresponding environmental unit, and the outcome influences the state of the small system. This setup leads to a discrete stochastic evolution of the system’s state. Under suitable scaling assumptions, one can rigorously prove that this discrete-time model converges, in the limit of infinitesimal interaction times, to a continuous stochastic equation of Belavkin type. This convergence relies on techniques from stochastic analysis—such as weak convergence of stochastic integrals—and functional analysis~\cite{kurtz1991wong, whitt2007martingale, dembozeitouni1998LDP}.

Open quantum systems, 
as quantum systems perturbed by external noise, are of fundamental importance in the current context of NISQ (Noisy Intermediate State Quantum Computing) environment, where quantum hardware suffer from noise in the form of partial decoherence of qubits and imperfect quantum gates.
Understanding this noise is of fundamental importance.
However, discrete formulations of quantum systems, akin to discrete stochastic systems, are difficult to grasp, and we hope that an accurate formulation of their continuous limit will help in understand their properties.

We develop here extensions of this standard repeated interaction model in several directions, accounting for more general -- and seemingly more realistic -- versions of noise. 
We first consider discrete dynamics in which the interaction between system and environment alternates between two distinct interaction operators, derive the corresponding continuous limits equations and analyse how a perturbation in the interaction operator affects the resulting quantum trajectories.
We then study memory effects in quantum dynamics (where the evolution depends not only on the current state of the system, but also on its past states) and show that the continuous limit turns out to be a stochastic Volterra equation.

The paper is organised as follows:
Section~\ref{sec:Background} recalls the discrete repeated interaction model and its convergence to a continuous-time stochastic differential equation. It also serves as a reminder of the essential tools of quantum computing calculus and sets the notations for the rest of the analysis.
We develop our first extension in Section~\ref{sec:Perturb}, modifying the unitary coupling at every time step and perturbing it using the machinery of quantum channels, 
and prove the continuous-time limit of this extension.
We further show that the perturbation can be analysis through the lens of large deviations techniques, providing precise estimates, useful to control the error estimates.
Finally, in Section~\ref{sec:memory}, we introduce a version with memory and prove its convergence to a stochastic Volterra equation.

\section{Background on convergence of discrete quantum trajectories}\label{sec:Background}
We first provide a self-contained overview of classical convergence results of discrete quantum trajectories to continuous-time stochastic models
and introduce the main notations and tools for the subsequent analysis.
We shall consider the \emph{discrete repeated interaction model}, where a quantum system interacts sequentially with an infinite chain of identical environment units.
After each interaction, a measurement is performed on the environment, resulting in a discrete-time stochastic evolution of the system's state.
We then recall the diffusive Belavkin stochastic differential equation describing the evolution of a quantum system under continuous indirect measurement, and discuss the convergence of the discrete model towards its continuous version.
This setup is based on and borrowed from~\cite{pellegrini2008existence} and related papers~\cite{attal2006from,gough2004stochastic}.

\subsection{Notations and setup}
Given a (complex) Hilbert space~$\Hh$,
a quantum state is represented via its density operator, namely a positive semi-definite Hermitian matrix with unit trace acting on~$\Hh$, and we shall write~$\Ss(\Hh)$
the set of all such states.
Quantum measurements are performed via \emph{observables}, that is self-adjoint operators on~$\Hh$ whose (real) eigenvalues correspond to the outcomes of the measurement.
We refer the reader to~\cite[Chapter~2]{nielsen_chuang_2000} for general terminology and background on this formalism for quantum computing.

\subsection{The quantum repeated measurement model}\label{sec:rep_measurement_model}
We consider a quantum system with Hilbert space~$\Hh_0$ interacting sequentially with an infinite chain of identical and independent quantum systems modelling the environment.
Each element~$k$ of the environment is described by a Hilbert space $\Hh_k=\Hh$ and the full space is 
$\Hht := \Hh_0 \otimes \bigotimes_{k \geq 1} \Hh_k$.
Each interaction lasts for a time length~$\tau>0$ and is governed by the total Hamiltonian
$$
\Hg_{\text{tot}} = \Hg_{0} \otimes \Ig + \Ig \otimes \Hg + \Hg_{\text{int}},
$$
acting on~$\Hh_0\otimes\Hh$,
leading to the unitary evolution operator $\Ug = \E^{-\I \tau \Hg_{\text{tot}}}$.
The $k$-th interaction is implemented by the unitary operator~$\Ug_k$, acting as $\Ug$ on the bipartite system $\Hh_0 \otimes \Hh_k$ and as the identity elsewhere.
After each interaction, a measurement is performed on~$\Hh_k$ via a fixed observable~$\Ag$ with spectral decomposition $\Ag = \sum_{j=1}^{\mf} \lambda_j \Pg_j$, where $\{\Pg_j\}_{j=1,\ldots,\mf}$ are orthogonal projectors.
The probability of observing  the eigenvalue~$\lambda_j$ for the
quantum state $\sigma \in \Ss(\Hht)$
is $\Tr[\sigma \Pg_j^k]$ by Born's rule and the post-measurement state becomes
$\sigma_j = \frac{\Pg_j^k \sigma \Pg_j^k}{\Tr[\sigma \Pg_j^k]}$,
where we denote~$\Pg^k_j$ the operator on~$\Hht$ which acts as~$\Pg_j$ on~$\Hh_k$ and as the identity on~$\Hh_i$ for $i\ne k$.
We initialise the system as
$\sigma = \rho \otimes \bigotimes_{j \geq 1}\beta$,
with $\rho\in\Ss(\Hh_0)$ and $\beta\in\Ss(\Hh)$.
After~$k$ interactions, the state then becomes
$\sigma_k = \Vg_k \sigma \Vg_k^\dagger$,
where 
we define recursively 
$\Vg_{k+1} = \Ug_{k+1} \Vg_k$,
for $k\geq 0$,
starting from 
$\Vg_0 = \Ig$.

Measurement outcomes are modelled by a sequence $\omb = (\omega_k)_{k \geq 1} \in \Omega^{\NN^*} = \{1, \dots, \mf\}^{\NN^*}$ and the cylinder $\sigma$-algebra generated by
$\Lambda_{i_1, \dots, i_k} = \{ \omb \in \Omega^{\NN^*} \mid \omega_1 = i_1, \dots, \omega_k = i_k \}$.
Because $\Ug_j$ commutes with $\Pg^k$ for each $k < j$, the (non-normalised) post-measurement state after outcomes $(i_1, \dots, i_k)$ reads
$
\widetilde{\sigma}(i_1, \dots, i_k)
= \left(\Pg_{i_k}^k \dots \Pg_{i_1}^1\right) \sigma_k \left(\Pg_{i_1}^1 \dots \Pg_{i_k}^k\right)$,
with associated probability
$
\PP[\Lambda_{i_1, \dots, i_k}] = \Tr[\widetilde{\sigma}(i_1, \dots, i_k)],
$
which defines a probability measure on $\Omega^{\NN^*}$.
The resulting discrete quantum trajectory is the normalised sequence
\begin{equation}\label{eq:pellegSequence}
\sigma_k(\omb) := \frac{\widetilde{\sigma}(\omega_1, \dots, \omega_k)}{\Tr[\widetilde{\sigma}(\omega_1, \dots, \omega_k)]}.
\end{equation}
The following proposition summarises the key results from~\cite[Proposition~2.1, Theorem~2.2, Theorem~ 2.3]{pellegrini2008existence} and characterises this sequence:

\begin{theorem}
\label{thm:QuantumTraj_Disc}
The trajectory $(\sigma_k)_{k\geq 0}$ is an $\Hht$-valued Markov chain
and satisfies
\begin{equation}\label{eq : sequence rho_k with a singular U}
\sigma_{k+1}(\omb) = \frac{\Pg^{k+1}_{\omega_{k+1}} \Ug_{k+1}
\sigma_k(\omb)
\Ug_{k+1}^\dagger \Pg^{k+1}_{\omega_{k+1}}}{\Tr[\sigma_k(\omb) \Ug_{k+1}^\dagger \Pg^{k+1}_{\omega_{k+1}} \Ug_{k+1}]},
\qquad\text{for all }k\geq 0,
\omb \in \Omega^{\NN^\ast}.
\end{equation}
Conditionally on 
$\{\sigma_{k} = \theta_k\}$, 
$\displaystyle 
\PP\left[\sigma_{k+1} = \theta_{k+1}^{i} \mid \sigma_k = \theta_k\right]
 = \Tr\left[\Ug_{k+1}(\theta_k \otimes \beta) \Ug_{k+1}^\dagger \Pg^{k+1}_i\right]$
where 
$$
\left\{\theta_{k+1}^{i} = \frac{\Pg^{k+1}_i \Ug_{k+1}(\theta_k \otimes \beta) \Ug_{k+1}^\dagger \Pg^{k+1}_i}{\Tr[\Ug_{k+1}(\theta_k \otimes \beta) \Ug_{k+1}^\dagger \Pg^{k+1}_i]}\right\}_{1, \dots, \mf},
\qquad\text{for all } k\geq 0.
$$
\end{theorem}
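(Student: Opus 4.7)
The plan is to unravel the definition of the non-normalised joint state and exploit the locality of each interaction unitary. The $(k+1)$-st unitary $\Ug_{k+1}$ acts as the identity on every Hilbert factor except $\Hh_0\otimes\Hh_{k+1}$, so it commutes with each projector $\Pg^{j}_{\omega_j}$ for $j\le k$. Writing $\sigma_{k+1}=\Ug_{k+1}\sigma_k\Ug_{k+1}^{\dagger}$ and using this commutation, I first derive
$$
\widetilde{\sigma}(\omega_1,\ldots,\omega_{k+1})
=\Pg^{k+1}_{\omega_{k+1}}\,\Ug_{k+1}\,\widetilde{\sigma}(\omega_1,\ldots,\omega_k)\,\Ug_{k+1}^{\dagger}\,\Pg^{k+1}_{\omega_{k+1}}.
$$
Taking the trace (via cyclicity and the idempotence $(\Pg^{k+1}_{\omega_{k+1}})^2=\Pg^{k+1}_{\omega_{k+1}}$) and dividing, the factor $\Tr[\widetilde{\sigma}(\omega_1,\ldots,\omega_k)]$ cancels between numerator and denominator, yielding the recursion \eqref{eq : sequence rho_k with a singular U}.

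For the transition law I apply Born's rule at the $(k{+}1)$-st step. Conditionally on $\sigma_k=\theta_k$, the fresh environmental unit $\Hh_{k+1}$ is in state $\beta$ and independent of the already-measured registers (since $\sigma=\rho\otimes\bigotimes_{j\ge 1}\beta$ is a product state and the earlier $\Ug_j$ do not touch $\Hh_{k+1}$). After partial tracing out those previous registers, the joint state on $\Hh_0\otimes\Hh_{k+1}$ just before the interaction is $\theta_k\otimes\beta$; applying $\Ug_{k+1}$ and projecting with $\Pg^{k+1}_i$ then gives outcome probability $\Tr[\Ug_{k+1}(\theta_k\otimes\beta)\Ug_{k+1}^{\dagger}\Pg^{k+1}_i]$ and the stated post-measurement states $\theta_{k+1}^i$. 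The Markov property is then immediate: $\sigma_{k+1}$ is a deterministic function of $\sigma_k$ and of the fresh outcome $\omega_{k+1}$, whose conditional distribution depends on the past only through $\sigma_k$.

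The main obstacle, really the only subtlety, is the bookkeeping around the tensor factors: one must carefully argue that each $\Ug_{k+1}$ commutes with every earlier projector $\Pg^{j}_{\omega_j}$ because they act on disjoint factors, and that the $\Hh_{k+1}$-component of $\widetilde{\sigma}(\omega_1,\ldots,\omega_k)$ is still $\beta$ since the interaction has not yet occurred there. Once these locality facts are cleanly recorded, the three claims follow by direct algebraic manipulation.
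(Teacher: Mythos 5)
The paper does not prove this theorem itself --- it is quoted directly from Pellegrini (Proposition~2.1, Theorems~2.2--2.3 of the cited reference) --- but your argument is correct and is exactly the standard one used there: locality of $\Ug_{k+1}$ gives the commutation with the earlier projectors and hence the recursion for $\widetilde{\sigma}$, cancellation of $\Tr[\widetilde{\sigma}(\omega_1,\dots,\omega_k)]$ yields \eqref{eq : sequence rho_k with a singular U}, and Born's rule on the fresh factor (still in state $\beta$) gives the transition law and the Markov property. No gaps.
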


The evolution of the couple (system, environment) is unitary, as required by the axioms of quantum mechanics. However, we are ultimately interested by the evolution of the system itself, which can be seen through \emph{partial trace} operations:
\begin{definition}[Partial trace]
Given two Hilbert spaces~$\Hh_1$ and~$\Hh_2$ and $\rho \in \Ss(\Hh_1 \otimes \Hh_2)$, the unique state~$\rho_1\in\Ss(\Hh_1)$ such that $\Tr_{\Hh_1}[\mathtt{D}\rho_1]
= \Tr_{\Hh_1 \otimes \Hh_2} [(\mathtt{D} \otimes \Ig_{\Hh_2}) \rho]$
for all bounded operators~$\mathtt{D}$ on~$\Hh_1$
is called the \emph{partial trace} of $\rho$ with respect to $\Hh_2$.
\end{definition}

Probabilistically, the partial trace corresponds to a conditional expectation
and we write~$\EE_0[\cdot]$ as the partial trace over~$\Hh_0$ with respect to $\bigotimes_{k \geq 1} \Hh_k$ of any state on~$\Hht$.  
For $\omb \in \Omega^{\NN^\ast}$, 
the discrete quantum trajectory defined by
\begin{equation} \label{eq:discrete_trajectory_single_U}
\rho_k(\omb) := \EE_0[\sigma_k(\omb)], \qquad\text{for }k\geq 0,
\end{equation}
on $\Hh_0$ represents the conditional evolution of the quantum system given the outcomes of the repeated indirect measurements.

We shall from now on consider $\Hh_0=\Hh_k=\CC^2$ for all $k\geq 1$ 
(and therefore set $\mf=2$)
and denote $\{e_{0}, e_{1}\}$ an orthonormal basis of~$\CC^2$. 
We choose $\beta= e_{0} e_{0}^\dagger$ and
use the  basis 
$\{
e_{0}\otimes e_{0},
e_{1}\otimes e_{0},
e_{0}\otimes e_{1},
e_{1}\otimes e_{1}\}$ 
for $\Hh_0 \otimes \Hh$.
Typically in Quantum Computing $e_{0} = \ket{0}=(1,0)^\top$ and $e_{1} = \ket{1}=(0,1)^\top$ denote the standard basis elements of~$\CC^2$ in Dirac's notations
and therefore $\beta=\ket{0}\bra{0}$.
In this basis, the unitary operator~$\Ug$ acting on $\Hh_0 \otimes \Hh$ can be written in block matrix form
\begin{equation}\label{eq:unitaryU}
\Ug = 
\begin{pmatrix}
\Ug_{00} & \Ug_{01} \\
\Ug_{10} & \Ug_{11}
\end{pmatrix},
\end{equation}
where each~$\Ug_{ij}$ is a $2\times 2$ matrix.
The choice $\beta = e_{0} e_{0}^\dagger$ 
ensures that only the blocks~$\Ug_{00}$ and~$\Ug_{01}$ influence the evolution of the discrete trajectory $(\rho_k)_{k\geq 0}$, and indeed we have
\begin{equation}\label{eq : og mu k+1}
\Ug (\rho_k\otimes \beta)\Ug^\dagger
=\sum_{i,j=0}^1 \Ug_{i0} \rho_{k} \Ug_{j0}^\dagger\otimes e_i e_j^\dagger
=\sum_{i,j=0}^1 \Ug_{i0} \rho_{k} \Ug_{j0}^\dagger\otimes 
\ket{i}\bra{j}.
\end{equation}
Thanks to Theorem~\ref{thm:QuantumTraj_Disc}, we define the two possible non-normalised states as 
\begin{equation} \label{eq:L0 and L1}
\rho_{k}^{j}  := \EE_0\Big[ (\Ig \otimes \Pg_{j}) \Ug (\rho_k\otimes \beta)\Ug^\dagger (\Ig \otimes \Pg_{j}) \Big], 
\qquad\text{for }j=0,1.
\end{equation}
The former appears with probability  
$p_{k+1} = \Tr[\rhokz]$ and the latter with probability  
$q_{k+1} =\Tr[\rhoko] = 1-p_{k+1}$.
For any $k\geq 1$, introduce the random variable 
$$
\nu_{k} :=
\begin{cases}
0, & \text{with probability } p_{k}, \\
1, & \text{with probability } q_{k}.
\end{cases}
$$
Using this notation and provided that both probabilities are not equal to zero, the discrete quantum trajectory is described by
\begin{equation}
\label{eq:discrete_evolution}
\rho_{k+1}  = \frac{\rhokz}{p_{k+1}} (1 - \nu_{k+1}) + \frac{\rhoko}{q_{k+1}} \nu_{k+1}
 = \rhokz + \rhoko - \left(\sqrt{\frac{q_{k+1}}{p_{k+1}}} \rhokz - \sqrt{\frac{p_{k+1}}{q_{k+1}}} \rhoko \right) X_{k+1},
\end{equation}
where we introduced the random variable
\begin{equation}\label{eq:def_X_k}
X_{k} := \frac{\nu_{k} - q_{k}}{\sqrt{q_{k} p_{k}}},
\qquad\text{for any }k\geq 1,
\end{equation}
with associated filtration 
$
\Ff_k := \sigma(X_i,\; i \leq k)$ on $\{0, 1\}^{\NN^\ast}$,
so that $(X_k)_{k\geq 1}$ forms a sequence satisfying
$\EE[X_{k+1} \mid \Ff_k] = 0$
and
$\EE[X_{k+1}^2 \mid \Ff_k] = 1$ for all $k\geq 0$.
\begin{remark}\label{rem:Proba01}
In the case where $p_{k+1} = 0$, then $\nu_{k+1} = 1$ almost surely
and~\eqref{eq:discrete_evolution} needs to be amended as
$\rho_{k+1} = \rho_{k}^{1}$ almost surely.
If $p_{k+1} = 1$, then $\nu_{k+1} = 0$ and $X_{k+1} = $ almost surely
and hence
$\rho_{k+1} = \rho_{k}^{0}$ almost surely.
We will discard these (trivial) limiting cases in the rest of the analysis.
\end{remark}

\subsection{Belavkin equations}

We now consider a two-level system in $\Hh_0=\CC^2$ interacting with an environment, whose global evolution is governed by a unitary process $(\Ug_t)_{t\geq 0}$ solving a quantum stochastic differential equation. Without measurement, the reduced evolution of the system is described by a semigroup with Lindblad generator~$\Lg$, satisfying the \emph{Master equation} (also called Gorini-Kossakowski-Sudarshan-Lindblad equation)~\cite[Chapter~8.4.1]{nielsen_chuang_2000}:
\begin{equation} \label{eq:master_equation}
\frac{\D\rho_t}{\D t} = \Lg(\rho_t) = -\I[\Hg_{0}, \rho_t] - \frac{1}{2} 
\{\Cg \Cg^\dagger, \rho_t \} + \Cg \rho_t \Cg^\dagger,
\end{equation}
starting from $\rho_0\in\Ss(\Hh_0)$ with $\Hg_{0}$ the system Hamiltonian and~$\Cg$ a given operator on~$\Hh_0$.
Recall that $[\cdot,\cdot]$ denotes the Lie bracket (matrix commutator) $[A,B] := AB - BA$ and $\{\cdot,\cdot\}$ the anticommutator
$\{A,B\} := AB + BA$.
We consider a noisy version of~\eqref{eq:master_equation} called 
the \emph{diffusive Belavkin equation}~\cite{belavkin1994nondemolition} that accounts for continuous monitoring:
\begin{equation}\label{eq:Belavkin_equation_og_form}
\D \rho_t = \Lg(\rho_t) \D t
+ \Big( \rho_t \Cg^\dagger + \Cg \rho_t - \Tr[\rho_t (\Cg + \Cg^\dagger)] \rho_t \Big) \D W_t, 
\qquad \rho_0\in\Ss(\Hh_0),
\end{equation}
for some Brownian motion~$W$
supported on a filtered probability space $(\Omega, \Ff, \{\Ff_t\}_{t \geq 0}, \PP)$.

\begin{remark}
A jump-version of the Belavkin equation, the \emph{Poisson Belavkin equation}, also exists, to model jump-type measurements, and reads~\cite{pellegrini_poisson2007}
\begin{equation}\label{poisson SDE}
\D\rho_t = \Lg(\rho_t)\D t + \left( \frac{\Cg\rho_t \Cg^\dagger}{\Tr[\Cg\rho_t \Cg^\dagger]} - \rho_t \right) 
\left(\D \widetilde{N}_t - \Tr[\Cg\rho_t \Cg^\dagger]\D t\right),
\end{equation}
where $\widetilde{N}$ is a counting process.
We shall not use it here though.
\end{remark}
Existence and well-posedness of~\eqref{eq:Belavkin_equation_og_form}
is guaranteed by the following ~\cite[Theorem~3.2]{pellegrini2008existence}:

\begin{theorem}\label{thm:SDE}
The SDE~\eqref{eq:Belavkin_equation_og_form} 
admits a unique strong solution in $\Ss(\Hh_0)$.
\end{theorem}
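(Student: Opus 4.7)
\bigskip

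\noindent\textbf{Proof proposal.}

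The plan is to reduce the nonlinear SDE~\eqref{eq:Belavkin_equation_og_form} to a linear one via a change of measure, solve the linear version by classical Lipschitz theory, transfer the solution back, and separately verify that $\Ss(\Hh_0)$ is invariant so that the nonlinearity never blows up. Since $\Hh_0 = \CC^2$, the state space is a compact convex subset of the finite-dimensional real vector space of Hermitian $2\times 2$ matrices, so the entire argument takes place in a finite-dimensional It\^o framework.

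First I would introduce, on an auxiliary reference probability space carrying a Brownian motion $\widetilde{W}$, the \emph{linear Belavkin equation}
\begin{equation*}
\D \widetilde{\rho}_t = \Lg(\widetilde{\rho}_t)\,\D t + \bigl(\widetilde{\rho}_t \Cg^\dagger + \Cg\,\widetilde{\rho}_t\bigr)\D \widetilde{W}_t,
\qquad \widetilde{\rho}_0 = \rho_0.
\end{equation*}
Its drift and diffusion are linear in $\widetilde{\rho}$, hence globally Lipschitz on the space of complex matrices, and so standard theory yields a unique strong solution. Hermiticity is preserved because both $\Lg$ and $\widetilde{\rho}\mapsto \widetilde{\rho}\Cg^\dagger + \Cg\widetilde{\rho}$ send Hermitian matrices to Hermitian matrices, hence $\widetilde{\rho}_t - \widetilde{\rho}_t^{\dagger}$ solves a linear SDE with zero initial data, to which uniqueness applies.

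The most delicate point, and the main obstacle, is positivity. The cleanest route is to represent $\widetilde{\rho}_t = \Mm_t \rho_0 \Mm_t^{\dagger}$ via the matrix-valued ``stochastic propagator'' SDE
\begin{equation*}
\D \Mm_t = \bigl(-\I \Hg_0 - \tfrac{1}{2}\Cg \Cg^{\dagger}\bigr)\Mm_t\,\D t + \Cg\,\Mm_t\,\D \widetilde{W}_t,
\qquad \Mm_0 = \Id,
\end{equation*}
and to verify by It\^o's formula that this factorisation indeed satisfies the linear Belavkin equation. Positivity of $\widetilde{\rho}_t$ then follows for free, and Hermiticity is re-confirmed. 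Next, applying It\^o to the linear trace functional yields $\D Z_t = \Tr[\widetilde{\rho}_t(\Cg + \Cg^{\dagger})]\D \widetilde{W}_t$ with $Z_t := \Tr[\widetilde{\rho}_t]$, so $Z$ is a non-negative local martingale with $Z_0 = 1$; the coefficient is bounded on bounded sets, and using finite-dimensional moment estimates for the linear SDE one upgrades $Z$ to a true martingale.

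With $Z$ in hand I would define a new probability measure $\PP$ by $\D\PP = Z_T\,\D\widetilde{\PP}$ on $[0,T]$, and set $\rho_t := \widetilde{\rho}_t / Z_t$ on the event $\{Z_t > 0\}$. Girsanov's theorem turns $W_t := \widetilde{W}_t - \int_0^t \Tr[\widetilde{\rho}_s(\Cg+\Cg^{\dagger})]/Z_s\,\D s$ into a $\PP$-Brownian motion, and a direct It\^o computation on the quotient $\widetilde{\rho}_t/Z_t$ recovers exactly~\eqref{eq:Belavkin_equation_og_form} driven by $W$, producing a strong solution taking values in $\Ss(\Hh_0)$ by Hermiticity, positivity, and the identity $\Tr[\rho_t] = Z_t/Z_t = 1$. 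For uniqueness, I would note that on the compact set $\Ss(\Hh_0)$ the coefficients of~\eqref{eq:Belavkin_equation_og_form} are locally, hence globally, Lipschitz, so pathwise uniqueness follows from a standard Gr\"onwall argument applied to the squared Frobenius distance between two solutions; the same Girsanov transformation also transfers uniqueness from the linear SDE to the nonlinear one, giving a second independent verification.
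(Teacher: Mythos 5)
Your argument is correct in outline, but it is not the route the paper takes. The paper does not prove Theorem~\ref{thm:SDE} at all: it invokes \cite[Theorem~3.2]{pellegrini2008existence}, and in the well-posedness lemmas it does prove itself (Lemma~\ref{lemma : uniqueness of solution with alter U} and its analogue for~\eqref{eq : SDE with U noise op2}) the strategy is purely analytic --- the coefficients are $\Cc^\infty$, hence locally Lipschitz; one truncates, or uses compactness of $\Ss(\Hh_0)$, to make them globally Lipschitz; and one quotes \cite[Theorem~5.2.1]{oksendal2003sde}. Your route is the classical ``linear stochastic master equation plus Girsanov'' construction from quantum filtering. What it buys is a transparent proof that the solution genuinely lives in $\Ss(\Hh_0)$: positivity comes for free from the factorisation $\widetilde{\rho}_t=\Mm_t\rho_0\Mm_t^\dagger$ and normalisation from dividing by $Z_t$, whereas the truncation route must argue separately that the state set is invariant --- a point the paper's own lemmas gloss over. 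What the paper's route buys is brevity and portability: it applies verbatim to the other SDEs of Sections~\ref{sec:Perturb} and~\ref{sec:memory}, where no linear/Girsanov structure is available.

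Two points need tightening. First, your identity $\D Z_t=\Tr[\widetilde{\rho}_t(\Cg+\Cg^\dagger)]\,\D\widetilde{W}_t$ requires $\Tr[\Lg(\rho)]=0$, which holds for the standard Lindblad form with anticommutator $\{\Cg^\dagger\Cg,\rho\}$ but not for the paper's literal $\{\Cg\Cg^\dagger,\rho\}$ unless $\Cg$ is normal; since the same cancellation is needed for any solution of~\eqref{eq:Belavkin_equation_og_form} to have unit trace at all, this is a sign convention the paper must intend, and your proof silently corrects it rather than containing an error --- but you should say so explicitly, and your propagator drift would then be $-\I\Hg_0-\tfrac12\Cg^\dagger\Cg$. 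Second, you divide by $Z_t$ without justifying that this is licit: record that a non-negative martingale is absorbed at $0$, so $\{\inf_{t\le T}Z_t=0\}\subseteq\{Z_T=0\}$ and $\PP[Z_T=0]=\EE_{\widetilde{\PP}}[Z_T\ind_{\{Z_T=0\}}]=0$, whence $\rho_t=\widetilde{\rho}_t/Z_t$ is defined for all $t\in[0,T]$ $\PP$-almost surely. With these two additions the proof is complete.
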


\subsection{Convergence of the discrete model to the solution of the Belavkin equation}

We now discretise the interval $[0,1]$ into~$n$ sub-intervals of size $\frac{1}{n}$, so that the unitary~$\Ug = \Ug(n)$ reads, in block form,
\begin{equation}\label{eq:unitaryU2}
\Ug(n) =
\begin{pmatrix}
\Ug_{00}(n) & \Ug_{01}(n) \\
\Ug_{10}(n) & \Ug_{11}(n)
\end{pmatrix},
\end{equation}
where each $\Ug_{ij}(n)$ acts on the system Hilbert space $\Hh_0$.
The following links the asymptotic expansions of the blocks~$\Ug_{ij}(n)$ with the corresponding generator of the unitary $\Ug(n)$:

\begin{proposition}\label{prop:corresponding_Hamiltonian}
Assume that, for $i,j=0,1$,
as $n$ tends to infinity,
$$
\Ug_{ij}(n) = \Ug_{ij}^{(0)} + \frac{1}{\sqrt{n}}\Ug_{ij}^{(1)} + \frac{1}{n}\Ug_{ij}^{(2)} 
+ o\left(\frac{1}{n}\right).
$$
Then there exists a Hermitian matrix~$\Hg(n)$ such that $\Ug(n) = \exp\big\{ \frac{\I}{n} \Hg(n) \big\}$ with
$\Hg(n) = nD +\sqrt{n}E +F + o(1)$,
as $n$ tends to infinity, for some matrices $D$, $E$, $F$.
\end{proposition}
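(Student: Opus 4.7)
The plan is to invert the relation $\Ug(n) = \exp\!\left(\frac{\I}{n}\Hg(n)\right)$ via the matrix logarithm and then Taylor-expand at the limit unitary in powers of $1/\sqrt{n}$. Assembling the blocks into matrices $\Ug^{(k)}$ for $k=0,1,2$, the hypothesis reads
\[
\Ug(n) \;=\; \Ug^{(0)} + \tfrac{1}{\sqrt{n}}\,\Ug^{(1)} + \tfrac{1}{n}\,\Ug^{(2)} + o\!\left(\tfrac{1}{n}\right)
\]
in operator norm. Since the unitary group is closed under norm convergence, $\Ug^{(0)}$ is itself unitary, and by the spectral theorem there exists a Hermitian $D$ with $\Ug^{(0)}=\exp(\I D)$, obtained by lifting each eigenvalue $\E^{\I\theta_j}$ of $\Ug^{(0)}$ to a real $\theta_j$ in a fundamental domain of $\RR/2\pi\ZZ$ avoiding a chosen logarithmic branch cut.

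This choice determines a holomorphic branch of the matrix logarithm, defined by holomorphic functional calculus on an open neighbourhood $\Vv$ of $\Ug^{(0)}$ and sending $\Ug^{(0)}$ to $\I D$. For $n$ large enough, $\Ug(n)\in\Vv$, and we set
\[
\Hg(n) \;:=\; -\I\, n\, \log \Ug(n).
\]
Because $\log$ of a unitary is skew-Hermitian on this branch, $\Hg(n)$ is Hermitian and satisfies $\Ug(n) = \exp\!\left(\frac{\I}{n}\Hg(n)\right)$ by construction. Taylor-expanding $\log$ at $\Ug^{(0)}$ to second order, using linearity (resp.\ bilinearity) of the first (resp.\ second) Fréchet derivative $\mathrm{D}\log$ (resp.\ $\mathrm{D}^2\log$), and absorbing cubic and higher contributions into $o(1/n)$ (the perturbation being $O(1/\sqrt n)$, such terms are $O(1/n^{3/2})$), one gets
\[
\log \Ug(n) = \I D + \tfrac{1}{\sqrt n}\,\mathrm{D}\log|_{\Ug^{(0)}}\!\!\left(\Ug^{(1)}\right) + \tfrac{1}{n}\!\left[\mathrm{D}\log|_{\Ug^{(0)}}\!\!\left(\Ug^{(2)}\right) + \tfrac{1}{2}\,\mathrm{D}^2\log|_{\Ug^{(0)}}\!\!\left(\Ug^{(1)},\Ug^{(1)}\right)\right] + o\!\left(\tfrac{1}{n}\right).
\]
Calling $\I E$ and $\I F$ the coefficients of $1/\sqrt n$ and $1/n$ respectively, and multiplying by $-\I n$, we arrive at $\Hg(n) = nD + \sqrt n\, E + F + o(1)$. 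Hermiticity of $E$ and $F$ (not strictly required by the statement) is automatic: $\Hg(n)$ is Hermitian, and extracting the coefficients of $\sqrt n$ and $1$ in $\Hg(n) - nD$ as $n\to\infty$ preserves Hermiticity.

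The main obstacle is the well-posedness of the logarithm: the matrix log is multi-valued, and one has to verify that a single holomorphic branch accommodates $\Ug^{(0)}$ together with all $\Ug(n)$ for $n$ sufficiently large. Norm convergence $\Ug(n)\to\Ug^{(0)}$ combined with spectral continuity guarantees that the eigenvalues of $\Ug(n)$ stay within a small neighbourhood of those of $\Ug^{(0)}$, hence away from any preselected branch cut disjoint from the spectrum of $\Ug^{(0)}$; the remainder of the argument is a direct analytic expansion, with no further non-trivial step.
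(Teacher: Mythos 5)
Your proof is correct, but it runs in the opposite direction from the paper's. The paper starts from the ansatz $\Hg(n) = nD + \sqrt{n}E + F + o(1)$, expands $\exp\{\tfrac{\I}{n}\Hg(n)\}$ through the explicit integral representation of Lemma~\ref{lemma : matrix exponential} (the operators $\phi_{\I D}$ and $\psi_{\I D}$), matches coefficients with the given expansion of $\Ug(n)$, and then solves for $E$ and $F$ by inverting $\phi_{\I D}$, whose invertibility (Lemma~\ref{lem:invertphi_{iD}}) forces the eigenvalue phases $\theta_j$ of $\Ug^{(0)}$ to be chosen so that no two differ by a nonzero multiple of $2\pi$. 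You instead construct $\Hg(n) := -\I\, n \log \Ug(n)$ outright from a holomorphic branch of the matrix logarithm and Taylor-expand the logarithm; the invertibility of $\phi_{\I D}$ is then absorbed into the differentiability of that branch, since $\mathrm{D}\log|_{\Ug^{(0)}}$ is precisely the inverse of $Y \mapsto \E^{\I D}\phi_{\I D}(Y)$, so your $E = -\I\,\mathrm{D}\log|_{\Ug^{(0)}}(\Ug^{(1)})$ coincides with the paper's $-\I\phi_{\I D}^{-1}(\Ug^{(0)\dagger}\Ug^{(1)})$. Your route has two advantages: it genuinely produces a Hermitian $\Hg(n)$ with $\Ug(n) = \exp\{\tfrac{\I}{n}\Hg(n)\}$ exactly for each large $n$ (the paper's argument only identifies what $D$, $E$, $F$ must be under the assumed form of $\Hg(n)$), and your requirement that all $\theta_j$ lie in a single fundamental domain of length $2\pi$ automatically rules out the $2\pi$-resonances that the paper has to patch by hand by resetting the $\theta_j$. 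What the paper's route buys is the closed-form integral expression for $\phi_{\I D}$ and its inverse as a Hadamard multiplier, hence fully explicit formulas for $E$ and $F$; your $E$ and $F$ are characterised only as Fr\'echet derivatives of the logarithm. Since the proposition asserts only the existence of $D$, $E$, $F$, both arguments suffice.
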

\begin{proof}
Consider the ansatz $\Ug(n) = \Ug^{(0)}+\frac{1}{\sqrt{n}}\Ug^{(1)}+\frac{1}{n}\Ug^{(2)}+\oon$. 
Since~$\Ug(n)$ is unitary,
identifying the terms of order~1 shows that~$\Ug^{(0)}$ is also unitary. 
Assume that $H(n) = nD +\sqrt{n}E +F +o(1)$ for some matrices $D,E,F$.
By Lemma~\ref{lemma : matrix exponential} 
(with~$\phi$ defined therein),
$$
\exp\left\{\frac{\I}{n}H(n)\right\}
= \E^{\I D} \left[\Ig+\frac{\I}{\sqrt{n}}\phi_{\I D}(E) + \frac{\I}{n}\left(\phi_{\I D}(F)-\half \phi_{\I D}^{(2)}(E)\right)\right] + \oon.
$$
Hence
$\Ug^{(0)}=\E^{\I D}$,
$\Ug^{(1)}=\I \E^{\I D} \phi_{\I D}(E)$
and 
$\Ug^{(2)}=\E^{\I D}
\left(\I \phi_{\I D}(F)-\half \phi_{\I D}^{(2)}(E)\right)$.
Since~$\Ug^{(0)}$ is unitary,
it admits the spectral decomposition
$\Ug^{(0)} = \Vg\,\text{Diag}(\E^{\I \theta_1},\dots, \E^{\I \theta_4})\,\Vg^\dagger$ with~$\Vg$ unitary,
and~$D$ is Hermitian. 
We choose $\theta_j$ so that if $k< l$ exists with $\theta_k-\theta_l \in 2\pi \ZZ\setminus\{0\}$, $\theta_l$ is reset to $\theta_k$.
and hence $\phi_{\I D}$ is invertible via Lemma~\ref{lem:invertphi_{iD}}.
Then
$$
E=-\I\phi_{\I D}^{-1}\left(\Ug^{(0)\dagger} \Ug^{(1)}\right)
\qquad\text{and}\qquad
F=-\I \phi^{-1}_{\I D}
\left[\Ug^{(0)\dagger}\Ug^{(2)}+\half\phi_{\I D}^{(2)}\left(\phi_{\I D}^{-1}(\Ug^{(0)\dagger} \Ug^{(1)})\right)\right],
$$ 
and the proposition follows.
\end{proof}

Returning to the quantum trajectory $(\rho_k(n))_k$, we consider the asymptotic expansions
\begin{equation}\label{eq:L00}
\Ug_{00}(n) = \Ig - \frac{1}{n} \left(\I \Hg_{0} + \frac{\Cg \Cg^\dagger}{2} \right) + o\left( \frac{1}{n} \right)
\qquad\text{and}\qquad
\Ug_{10}(n) = \frac{\Cg}{\sqrt{n}} + o\left( \frac{1}{n} \right),
\end{equation}
as~$n$ tends to infinity.
Note that only~$\Ug_{00}$ and~$\Ug_{10}$ need to be approximated by~\eqref{eq : og mu k+1}, since the dynamics of~$(\rho_k)$ depend only on them.
From now on, we write $\rho_k$ for $\rho_k(n)$ when~$n$ is fixed.
The following assumption will be key throughout the whole paper:

\begin{assumption}\label{assu:NonDiag}
The observable $\Ag=\lambda_0\Pg_0 + \lambda_1 \Pg_1$ is non-diagonal in the basis $\{e_0,e_1\}$.
\end{assumption}

By \emph{non-diagonal}, we mean that at least one spectral projector of~$\Ag$ (either~$\Pg_0$ or~$\Pg_1$) is not diagonal in the basis $\{e_0,e_1\}$, equivalently
$\{e_0,e_1\}$ is not an eigenbasis of~$\Ag$. This is also equivalent to $\langle e_i,\Pg_j e_i\rangle \notin \{i,1\}$ for all $i,j$. 
Here, since there are only two projectors, assuming \emph{for either} is equivalent to assuming \emph{for all}.
Later, in higher dimensions, \emph{for all} will be the correct assumption.

If both $\Pg_j$ are not diagonal in the basis $(e_0,e_1)$, since their eigenvalues are $0$ and $1$, this implies that $\langle e_0,\Pg_j e_0\rangle \notin \{0,1\}$ for all $j$. Conversely, let us assume that $\langle e_0,\Pg_j e_0\rangle \notin \{0,1\}$. Note that $\Tr[\Pg_j]=1$ because the $\Pg_j$ are not equal to $0$ or $\Ig$ and the dimension of $\Hh$ is $2$. Then $\langle e_1,\Pg_j e_1\rangle \notin \{0,1\}$. Hence the $\Pg_j$s are not diagonal in the basis.
The following proposition, proved in Appendix~\ref{proof:prop__increment_of_rho_k}, provides a small-step expansion for the evolution of~$(\rho_k)$.

\begin{proposition}\label{prop:increment_of_rho_k}
Under Assumption~\ref{assu:NonDiag}, 
the sequence~\eqref{eq:discrete_evolution} satisfies
\begin{equation}\label{increment of rho_k}
\rho_{k+1} - \rho_{k} = \frac{1}{n} \Lg(\rho_{k})
+ \Big( \Cg_{\gamma} \rho_{k} + \rho_{k} \Cg_{\gamma}^\dagger - \Tr\left[\rho_{k}(\Cg_{\gamma} + \Cg_{\gamma}^\dagger)\right]
\rho_{k} + o(1) \Big) \frac{X_{k+1}}{\sqrt{n}} + \oon,
\end{equation}
where $\Cg_{\gamma}=\gamma \Cg$ with $\gamma\in\CC\setminus\{0\}$ depending on~$\Ag$ and on the basis $\{e_0,e_1\}$, and
$$
\Lg(\rho_k) := 
\Cg \rho_k \Cg^\dagger -\I[\Hg_{0}, \rho_k] - \frac{1}{2} 
\{\Cg \Cg^\dagger, \rho_k \}.
$$
\end{proposition}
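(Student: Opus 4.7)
The plan is to substitute the expansions~\eqref{eq:L00} into the stochastic recursion~\eqref{eq:discrete_evolution} and to track orders $1$, $n^{-1/2}$, and $n^{-1}$ carefully. The calculation is purely algebraic, but the cancellations hinge on the projection structure of $\Pg_0,\Pg_1$ enforced by Assumption~\ref{assu:NonDiag}.

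First, using the block form~\eqref{eq:unitaryU} together with identity~\eqref{eq : og mu k+1}, the definition~\eqref{eq:L0 and L1} combined with the partial trace gives
\begin{equation*}
\rho_{k}^{j} = \sum_{i_1,i_2=0}^1 \langle i_2|\Pg_j|i_1\rangle\,\Ug_{i_10}\rho_k\Ug_{i_20}^\dagger,\qquad j=0,1.
\end{equation*}
Write $\Pg_0=\bigl(\begin{smallmatrix}a&b\\ \bar b & 1-a\end{smallmatrix}\bigr)$ in the basis $\{e_0,e_1\}$; since $\Pg_0$ is a rank-one projector, $|b|^2=a(1-a)$, and Assumption~\ref{assu:NonDiag} forces $a\in(0,1)$ and $b\ne 0$; by $\Pg_0+\Pg_1=\Ig$ the matrix $\Pg_1$ has complementary entries. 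Substituting~\eqref{eq:L00} into each of the four block products then yields
\begin{equation*}
\rho_k^0 = a\rho_k + \tfrac{1}{\sqrt n}\bigl(b\Cg\rho_k + \bar b\rho_k\Cg^\dagger\bigr) + \tfrac{1}{n}R_k^{0} + o\!\left(\tfrac1n\right),\quad \rho_k^1 = (1-a)\rho_k - \tfrac{1}{\sqrt n}\bigl(b\Cg\rho_k + \bar b\rho_k\Cg^\dagger\bigr) + \tfrac{1}{n}R_k^{1} + o\!\left(\tfrac1n\right),
\end{equation*}
where the explicit $R_k^{j}$ satisfy $R_k^0+R_k^1 = \Lg(\rho_k)$, a direct consequence of $\Pg_0+\Pg_1=\Ig$. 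Taking traces and setting $\theta_k:=\Tr[(b\Cg+\bar b\Cg^\dagger)\rho_k]$, one reads off $p_{k+1}=a+\theta_k/\sqrt n + O(1/n)$ and $q_{k+1}=1-a-\theta_k/\sqrt n + O(1/n)$; both stay uniformly bounded away from $0$ and $1$, securing both the ratios appearing in~\eqref{eq:discrete_evolution} and the uniform boundedness of $X_{k+1}$ in $n$.

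Next I would split~\eqref{eq:discrete_evolution} as $\rho_{k+1} = (\rho_k^0+\rho_k^1) - \bigl(\sqrt{q_{k+1}/p_{k+1}}\,\rho_k^0 - \sqrt{p_{k+1}/q_{k+1}}\,\rho_k^1\bigr)X_{k+1}$. The deterministic part gives $\rho_k^0+\rho_k^1 - \rho_k = \tfrac{1}{n}\Lg(\rho_k)+o(1/n)$. For the stochastic coefficient, the leading $\rho_k$-contributions cancel by symmetry ($\sqrt{a(1-a)}\rho_k-\sqrt{a(1-a)}\rho_k=0$). Expanding $\sqrt{q_{k+1}/p_{k+1}}=\sqrt{(1-a)/a}\bigl(1-\theta_k/(2a(1-a)\sqrt n)\bigr)+O(1/n)$ and its reciprocal, and using the identity $\sqrt{(1-a)/a}+\sqrt{a/(1-a)}=1/\sqrt{a(1-a)}$, the $n^{-1/2}$ contributions assemble into
\begin{equation*}
\sqrt{\tfrac{q_{k+1}}{p_{k+1}}}\,\rho_k^0 - \sqrt{\tfrac{p_{k+1}}{q_{k+1}}}\,\rho_k^1 = \frac{1}{\sqrt n\,\sqrt{a(1-a)}}\Bigl[b\Cg\rho_k+\bar b\rho_k\Cg^\dagger - \theta_k\rho_k\Bigr] + O\!\left(\tfrac1n\right).
\end{equation*}
Setting $\gamma:=-b/\sqrt{a(1-a)}$ (nonzero by Assumption~\ref{assu:NonDiag}) and $\Cg_\gamma:=\gamma\Cg$, the bracket equals $-\bigl(\Cg_\gamma\rho_k+\rho_k\Cg_\gamma^\dagger-\Tr[\rho_k(\Cg_\gamma+\Cg_\gamma^\dagger)]\rho_k\bigr)$, and the minus sign is absorbed by the leading minus in~\eqref{eq:discrete_evolution}. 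Absorbing the $O(1/n)$ residual into an $o(1)$ factor multiplied by $X_{k+1}/\sqrt n$ (legitimate since $X_{k+1}$ is uniformly bounded in $n$) produces~\eqref{increment of rho_k}.

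The principal obstacle is the coupled bookkeeping of the $n^{-1/2}$ corrections to the two square roots and of the $n^{-1/2}$ pieces of $\rho_k^{0}, \rho_k^{1}$: it is precisely their recombination that produces the nonlinear normalisation $-\Tr[\rho_k(\Cg_\gamma+\Cg_\gamma^\dagger)]\rho_k$ with the correct prefactor $\gamma$, and any oversight at this level would spoil the Belavkin structure. The role of Assumption~\ref{assu:NonDiag} is equally critical: without $a\in(0,1)$ and $b\ne 0$, the prefactor $1/\sqrt{a(1-a)}$ would blow up or $\gamma$ would vanish, and the dynamics would degenerate into the trivial cases of Remark~\ref{rem:Proba01}.
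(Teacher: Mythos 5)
Your proposal is correct and follows essentially the same route as the paper's own proof: expand $\rho_k^0,\rho_k^1$, $p_{k+1}$, $q_{k+1}$ via~\eqref{eq:L00} and the projector entries, observe that the $n^{-1/2}$ terms cancel in the sum $\rho_k^0+\rho_k^1$ while the square-root ratios recombine the $n^{-1/2}$ pieces into the Belavkin coefficient, with the same $\gamma=-\pf_{01}/\sqrt{\pf_{00}(1-\pf_{00})}$ as the paper obtains. The only difference is cosmetic (parametrising $\Pg_0$ by $a,b$ and using $\Pg_1=\Ig-\Pg_0$ explicitly, rather than the paper's $\pf_{ij},\qf_{ij}$ bookkeeping).
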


\begin{remark}\label{rem:gamma1}
The precise value of~$\gamma$ is of no particular importance for the rest (as long as it is non zero), and we shall hence take $\gamma=1$.
\end{remark}
\begin{lemma}\label{lemma : A diagonal poisson case}
If $\Ag=\lambda_0\begin{pmatrix}
1&0\\0&0
\end{pmatrix}+\lambda_1\begin{pmatrix}
0&0\\0&1
\end{pmatrix}$ is diagonal, then 
$$
\rho_{k+1} - \rho_{k} 
 = \frac{1}{n}\Lg(\rho_{k})
+ \left[ \frac{\Cg\rho_{k} \Cg^\dagger}{\Tr[\Cg\rho_{k}\Cg^\dagger]}-\rho_{k} +o(1)\right]\sqrt{p_{k+1}q_{k+1}}X_{k+1}
+\oon.
$$
\end{lemma}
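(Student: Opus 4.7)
The plan is to parallel the proof of Proposition~\ref{prop:increment_of_rho_k}, exploiting the diagonal structure of $\Ag$ to simplify the projection step and then tracking carefully the asymmetric scaling of $p_{k+1}$ and $q_{k+1}$.

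Since $\Pg_0 = e_0 e_0^\dagger$ and $\Pg_1 = e_1 e_1^\dagger$ under the diagonal hypothesis, the sandwich $(\Ig\otimes\Pg_j)\,\Ug(\rho_k\otimes\beta)\Ug^\dagger(\Ig\otimes\Pg_j)$ collapses the double sum in~\eqref{eq : og mu k+1} to the single diagonal entry $\Ug_{j0}\rho_k\Ug_{j0}^\dagger \otimes e_j e_j^\dagger$; the off-diagonal cross terms that generated the diffusive noise in Proposition~\ref{prop:increment_of_rho_k} are therefore absent here. Taking the partial trace through~\eqref{eq:L0 and L1} yields the clean identities $\rhokz = \Ug_{00}\rho_k\Ug_{00}^\dagger$ and $\rhoko = \Ug_{10}\rho_k\Ug_{10}^\dagger$. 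Inserting the expansions~\eqref{eq:L00} to order~$1/n$ gives
\[
\rhokz = \rho_k + \frac{1}{n}\Bigl(-\I[\Hg_{0},\rho_k] - \tfrac{1}{2}\{\Cg\Cg^\dagger,\rho_k\}\Bigr) + o(1/n) \qquad\text{and}\qquad \rhoko = \frac{1}{n}\Cg\rho_k\Cg^\dagger + o(1/n),
\]
whose sum satisfies $\rhokz + \rhoko - \rho_k = \tfrac{1}{n}\Lg(\rho_k) + o(1/n)$, accounting for the Lindbladian drift in the target formula.

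Taking traces in the above yields $q_{k+1} = \tfrac{1}{n}\Tr[\Cg\rho_k\Cg^\dagger] + o(1/n)$ and $p_{k+1} = 1 - q_{k+1}$. The qualitative novelty is that $q_{k+1} = O(1/n)$ rather than $O(1)$, so $\sqrt{q_{k+1}/p_{k+1}} \sim n^{-1/2}$ while $\sqrt{p_{k+1}/q_{k+1}} \sim n^{1/2}$. At leading order,
\[
\sqrt{\tfrac{q_{k+1}}{p_{k+1}}}\,\rhokz = \frac{1}{\sqrt n}\sqrt{\Tr[\Cg\rho_k\Cg^\dagger]}\,\rho_k + o(1/\sqrt n), \qquad \sqrt{\tfrac{p_{k+1}}{q_{k+1}}}\,\rhoko = \frac{1}{\sqrt n}\,\frac{\Cg\rho_k\Cg^\dagger}{\sqrt{\Tr[\Cg\rho_k\Cg^\dagger]}} + o(1/\sqrt n).
\]
Factoring out $\sqrt{p_{k+1}q_{k+1}} = \tfrac{1}{\sqrt n}\sqrt{\Tr[\Cg\rho_k\Cg^\dagger]} + o(1/\sqrt n)$ then rewrites the noise bracket in~\eqref{eq:discrete_evolution} as $-\sqrt{p_{k+1}q_{k+1}}\bigl[\tfrac{\Cg\rho_k\Cg^\dagger}{\Tr[\Cg\rho_k\Cg^\dagger]} - \rho_k\bigr] + o(1/\sqrt n)$. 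Substituting back into~\eqref{eq:discrete_evolution}, the leading minus sign in front of the bracket flips this into the claimed positive coefficient, and the residual $o(1/\sqrt n)X_{k+1}$ is absorbed into the $o(1)\sqrt{p_{k+1}q_{k+1}}X_{k+1}$ of the statement.

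The main delicate point is the order bookkeeping around the divergent factor $\sqrt{p_{k+1}/q_{k+1}} \sim n^{1/2}$: one must check that the $o(1/n)$ remainders produced by~\eqref{eq:L00} still yield $o(1/\sqrt n)$ contributions after division by $\sqrt{q_{k+1}}$, which is licit precisely because $\rhoko$ itself is $O(1/n)$ and the two divergences compensate. As noted in Remark~\ref{rem:Proba01}, the trivial case $\Tr[\Cg\rho_k\Cg^\dagger] = 0$ (for which $q_{k+1}=0$ and both the division and the Poisson bracket in the statement are ill-defined) must be excluded.
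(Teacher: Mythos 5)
Your proposal is correct and follows essentially the same route as the paper's proof: identify $\rhokz=\Ug_{00}\rho_k\Ug_{00}^\dagger$ and $\rhoko=\Ug_{10}\rho_k\Ug_{10}^\dagger$ from the diagonal projectors, expand via~\eqref{eq:L00} to get the Lindbladian drift, note $q_{k+1}=\tfrac1n\Tr[\Cg\rho_k\Cg^\dagger]+\oon$, and factor $\sqrt{p_{k+1}q_{k+1}}$ out of the noise bracket. Your bookkeeping of the compensating divergences and your explicit exclusion of the degenerate case $\Tr[\Cg\rho_k\Cg^\dagger]=0$ are in fact slightly more careful than the paper's write-up (which, e.g., omits the $\tfrac1n$ in its displayed expression for $q_{k+1}$ and the trace normalisation in $\rhoko/q_{k+1}$, evidently typos given the lemma's statement).
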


\begin{proof}    With~\eqref{eq:discrete_evolution}, note that the only term for which the proof of Proposition~\ref{prop:increment_of_rho_k} must be adapted is $-\sqrt{\frac{q_{k+1}}{p_{k+1}}} \rhokz + \sqrt{\frac{p_{k+1}}{q_{k+1}}} \rhoko$. By factoring by $\sqrt{p_{k+1}q_{k+1}}$, let us calculate $-\frac{\rhokz}{p_{k+1}}+\frac{\rhoko}{q_{k+1}}$.
With the proof of Proposition~\ref{prop:increment_of_rho_k}, recall that 
$$
\rhokz=\displaystyle \rho_{k}-\frac{\I}{n}[\Hg_{0},\rho_{k}]-\frac{1}{2n}\{\Cg \Cg^\dagger,\rho_{k}\} 
 + \oon
 \qquad\text{and}\qquad \rhoko=\displaystyle \frac{1}{n}\Cg\rho_{k}\Cg^\dagger + \oon.
 $$
Then 
$p_{k+1}=1-\frac{1}{n}\Tr[\Cg\rho_{k}\Cg^\dagger]+\oon$
and $q_{k+1}=1-p_{k+1} = \Tr[\Cg\rho_{k}\Cg^\dagger]+\oon$,
hence 
$$
\frac{\rhokz}{p_{k+1}}=\rho_{k}+o(1)
 \quad\text{and}\quad \frac{\rhoko}{q_{k+1}} = \Cg\rho_{k}\Cg^\dagger + o(1),
$$
and the proposition follows.
\end{proof}

In the non-diagonal case in Proposition~\ref{prop:increment_of_rho_k},
two types of convergence were established in~\cite{pellegrini2008existence}: 
in expectation~\cite[Theorem~4.1]{pellegrini2008existence} 
and in distribution~\cite[Theorem~4.4]{pellegrini2008existence}. 
The following two theorems summarise the results of the  previous subsections. 
Note that the convergence in distribution is also established if~$\Ag$ is diagonal (as in Lemma~\ref{lemma : A diagonal poisson case}) in \cite[Theorem~4]{pellegrini_poisson2007}.

\begin{theorem}\label{thm:Convergence_in_E}
Under Assumption~\ref{assu:NonDiag}, 
the map
$t \mapsto \EE[\rho_{\lfloor nt \rfloor}(n)]$  
converges in $L^\infty([0, 1])$ as~$n$ tends to infinity to
the unique solution to
$\frac{\D\nu_t}{\D t} = \Lg(\nu_t)$,
starting from $\nu_0=\rho_0$.
\end{theorem}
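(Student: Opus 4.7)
The plan is to take expectations in the discrete increment formula~\eqref{increment of rho_k} of Proposition~\ref{prop:increment_of_rho_k}, recognise the resulting recursion as a perturbed explicit Euler scheme for the linear ODE $\dot\nu = \Lg(\nu)$, and conclude via a discrete Gr\"onwall estimate. Conditioning on $\Ff_k$, the coefficient of $X_{k+1}/\sqrt{n}$ in~\eqref{increment of rho_k} is $\Ff_k$-measurable, while $\EE[X_{k+1}\mid\Ff_k]=0$; hence the martingale term vanishes in the unconditional expectation. Setting $\mu^n_k := \EE[\rho_k(n)]$ and using the linearity of $\Lg$, this yields
$$
\mu^n_{k+1} = \mu^n_k + \frac{1}{n}\Lg(\mu^n_k) + r_n(k),
\qquad \|r_n(k)\| = o(1/n),
$$
where the $o(1/n)$ control is \emph{uniform} in $k\in\{0,\ldots,n-1\}$. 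This uniformity is essential and follows from the fact that $\rho_k(n)\in\Ss(\Hh_0)$ for every $k,n$ and that $\Ss(\Hh_0)$ is compact: the Taylor remainders entering Proposition~\ref{prop:increment_of_rho_k} are continuous in $\rho_k$ and therefore bounded uniformly over the state space.

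Since $\Lg$ is a bounded linear operator on the finite-dimensional space of Hermitian matrices on $\Hh_0$, the Cauchy problem $\dot\nu_t=\Lg(\nu_t)$, $\nu_0=\rho_0$, admits the unique global solution $\nu_t = \E^{t\Lg}\rho_0$, which handles the existence/uniqueness part of the statement. Taylor-expanding this semigroup gives $\nu_{(k+1)/n}-\nu_{k/n} = \frac{1}{n}\Lg(\nu_{k/n}) + \widetilde r_n(k)$ with $\|\widetilde r_n(k)\|=O(1/n^2)$ uniformly in $k\le n$. Subtracting from the previous display, the error $e^n_k := \mu^n_k - \nu_{k/n}$ satisfies $e^n_0=0$ and
$$
e^n_{k+1} = \Big(\Ig + \tfrac{1}{n}\Lg\Big) e^n_k + \delta_n(k),
\qquad \|\delta_n(k)\|=o(1/n).
$$
Using $\|\Ig+\Lg/n\|\le 1+\|\Lg\|/n$, a discrete Gr\"onwall inequality yields $\max_{0\le k\le n}\|e^n_k\|\le \E^{\|\Lg\|}\sum_{j=0}^{n-1}\|\delta_n(j)\| = n\cdot o(1/n) = o(1)$. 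Finally, since $\nu$ is Lipschitz on $[0,1]$, for $t\in[k/n,(k+1)/n)$ we have $\|\mu^n_{\lfloor nt\rfloor}-\nu_t\|\le\|e^n_k\|+\|\nu_{k/n}-\nu_t\|= o(1)+O(1/n)$ uniformly in $t$, which is exactly convergence in $L^\infty([0,1])$.

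The main obstacle is securing the \emph{uniform} $o(1/n)$ bound on $r_n(k)$ in the first step; once this uniformity is established from the compactness of $\Ss(\Hh_0)$, the remainder is a textbook consistency/stability analysis for an explicit Euler scheme applied to a bounded linear ODE, where the martingale structure of $(X_k)$ does all the work needed to eliminate the $1/\sqrt{n}$ fluctuations at the expectation level.
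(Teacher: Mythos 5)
Your proof is correct and follows essentially the same route as the paper: the paper defers this particular statement to \cite[Theorem~4.1]{pellegrini2008existence}, but its own proof of the directly analogous Theorem~\ref{Convergence in E with two Cs} uses exactly your argument --- kill the $X_{k+1}/\sqrt{n}$ term by conditioning on $\Ff_k$, read the resulting recursion for $\EE[\rho_k(n)]$ as a perturbed Euler scheme for the linear ODE, and close with a discrete Gr\"onwall estimate driven by a uniform $o(1/n)$ remainder. Your explicit justification of that uniformity via compactness of $\Ss(\Hh_0)$ is the right (and only nontrivial) point, and it is sound.
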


Recall~\cite[Chapter~3]{billingsley2013convergence} that the Skorokhod space $\DD([0,1],\RR^d)$ is the set of $\RR^d$-valued c\`adl\`ag processes
and is particularly suited for quantum trajectories as their evolution typically features small jumps induced by discrete measurement events.
For $f,g\in \DD([0,1],\RR^d)$, 
let 
\begin{equation*}
\df(f,g) :=
\inf\left\{\eps>0: 
\begin{array}{l}
\sup_{0\leq t \leq 1} |t-F(t)|\leq \eps,\\
\sup_{0\leq t \leq 1} \lVert f(t)-g(F(t))\rVert \leq \eps, \end{array}
\text{ for some }F\in\Lambda
\right\},
\end{equation*}
where $\Lambda$ is the space of  continuous and increasing functions from $[0,1]$ to $[0,1]$.
A sequence $(x_n)_{n\in\NN}\in\Cc([0,1],\RR^d)^{\NN}$ is said to converge to  $x\in\Cc([0,1],\RR^d)$ in $(\DD([0,1],\RR^d),\df)$ if and only if it converges to~$x$ in $(\Cc([0,1],\RR^d),\|\cdot\|_\infty)$.
We denote $X_n \Rightarrow X$ for the weak convergence of a sequence
$(X_n)$, either in 
$\DD([0,1],\RR^d)$ or in $\Cc([0,1],\RR^d)$,
and by $X_n(t) \Rightarrow X(t)$ the convergence in law at some fixed time $t\in [0,1]$.
Obviously weak convergence in $\DD([0,1],\RR^d)$ implies 
convergence in law in~$\RR^d$ for all $t\in [0,1]$.

\begin{theorem}
Under Assumption~\ref{assu:NonDiag}, 
for $\rho_0\in\Ss(\Hh_0)$,
the discrete quantum trajectory $(\rho_{\lfloor n\cdot \rfloor}(n))_n$ converges in distribution to the solution to~\eqref{eq:Belavkin_equation_og_form} as~$n$ to infinity.
\end{theorem}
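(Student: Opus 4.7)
The plan is to rewrite the one-step expansion of Proposition~\ref{prop:increment_of_rho_k} as an Euler-type scheme and to invoke a weak convergence theorem for sequences of semimartingales. I would define the interpolated process $Y^n(t) := \rho_{\lfloor nt\rfloor}(n)$ in $\DD([0,1],\Ss(\Hh_0))$, the piecewise-constant martingale $M^n(t) := \frac{1}{\sqrt{n}}\sum_{k=1}^{\lfloor nt\rfloor} X_k$ adapted to $\Ff^n_t := \Ff_{\lfloor nt \rfloor}$, and the diffusion coefficient $h(\rho) := \Cg\rho + \rho\Cg^\dagger - \Tr[\rho(\Cg+\Cg^\dagger)]\rho$. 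Telescoping~\eqref{increment of rho_k} then yields the stochastic integral representation
$$
Y^n(t) = \rho_0 + \int_0^t \Lg(Y^n(s))\,\D s + \int_0^t h(Y^n(s^-))\,\D M^n(s) + R^n(t),
$$
where $R^n$ gathers the $\oon$ drift residuals and the $o(1)/\sqrt{n}$ perturbation of the noise coefficient.

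First I would establish that $M^n \Rightarrow W$ in $\DD([0,1],\RR)$ for a standard Brownian motion~$W$. Since $(X_k)$ is an $(\Ff_k)$-martingale difference sequence with $\EE[X_{k+1}^2 \mid \Ff_k] = 1$ by~\eqref{eq:def_X_k}, the predictable quadratic variation satisfies $\langle M^n\rangle_t = \lfloor nt\rfloor/n \to t$; under Assumption~\ref{assu:NonDiag} the probabilities $p_{k+1}$ and $q_{k+1}$ remain bounded away from~$0$ and~$1$, so $|X_k|$ is uniformly bounded and Lindeberg's condition is trivially satisfied, yielding the functional martingale central limit theorem. Next I would prove tightness of $(Y^n)$ in $\DD([0,1],\Ss(\Hh_0))$ via Aldous's criterion: $\Ss(\Hh_0)$ is compact, and for stopping times $\tau_n\leq \tau_n+\delta\leq 1$ a direct computation using the orthogonality of martingale increments yields $\EE[\|Y^n(\tau_n+\delta)-Y^n(\tau_n)\|^2]=O(\delta)$, vanishing uniformly in~$n$ as $\delta\downarrow 0$.

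Third, I would identify the limit via the Kurtz-Protter theorem on weak convergence of stochastic integrals~\cite{kurtz1991wong}. Extracting a jointly convergent subsequence from the tight family $(Y^n, M^n, R^n)$ with limit $(Y, W, 0)$, the uniform bound on $\langle M^n\rangle$ makes $(M^n)$ a \emph{good} sequence of semimartingales, and the continuity of $h$ on the compact set $\Ss(\Hh_0)$ allows passage to the limit inside the stochastic integral, so that
$$
\int_0^\cdot h(Y^n(s^-))\,\D M^n(s) \;\Longrightarrow\; \int_0^\cdot h(Y(s))\,\D W(s).
$$
Combined with the trivial convergence of the Riemann-sum drift term, this shows that $Y$ is a weak solution of~\eqref{eq:Belavkin_equation_og_form}, and the strong (hence weak) uniqueness provided by Theorem~\ref{thm:SDE} identifies the limit and forces the whole sequence to converge in distribution. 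The main obstacle I expect is controlling $R^n$ uniformly on $[0,1]$: its noise-type residual is pointwise $o(1)/\sqrt{n}$ but summed $O(n)$ times, so I must combine the martingale structure with Doob's maximal inequality and a uniform, deterministic $o(1)$ estimate on the coefficient error (traceable back to the expansions~\eqref{eq:L00}) in order to convert the pointwise bound into a global one.
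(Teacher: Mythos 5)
Your proposal is correct and follows essentially the same route as the paper: the paper does not prove this theorem itself (it cites \cite[Theorem~4.4]{pellegrini2008existence}), but its proofs of the analogous results (Theorem~\ref{th: cv SDE with alternating U}, Theorem~\ref{cv in d with U nosie op2}, Theorem~\ref{th : cv in d of X n}) use exactly your scheme — a functional martingale CLT for $M^n$, the \emph{goodness} of the semimartingale sequence, Proposition~\ref{prop:KurtzWong} to pass to the limit in the stochastic integral with the residual $R^n$ absorbed into the $U_n$ term, and strong uniqueness from Theorem~\ref{thm:SDE} to identify the limit. Your separate Aldous tightness step is redundant (Proposition~\ref{prop:KurtzWong} already yields tightness of the solution sequence), and the paper verifies the martingale FCLT via Whitt's jump condition rather than Lindeberg, but these are cosmetic differences.
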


\section{Perturbing the unitary coupling}\label{sec:Perturb}
Until now, we have studied the behaviour of a sequence of quantum trajectories arising from repeated interactions with an environment (governed by a fixed unitary operator~$\Ug$) and measurements associated with a fixed observable~$\Ag$.
This model yields convergence, in a suitable scaling regime, towards a solution of the diffusive Belavkin equation~\eqref{eq:master_equation}.
We now aim to understand how changes in the interaction mechanism (specifically in the choice of unitary operators) affect the limiting stochastic evolution of the system.
We first consider a modified model in which the $k$-th interaction is not governed by the same unitary for all $k$, but alternates between two unitaries $\Ug^+$ and $\Ug^-$ depending on the parity of $k$. We analyse how this periodic structure in the dynamics influences the resulting continuous-time limit.
We then turn to a stability question,  comparing the limiting trajectories associated with two sequences of repeated interactions governed by the same observable~$\Ag$, but with two slightly different interaction unitaries. One of them represents an ideal unitary evolution, while the other corresponds to a perturbed version. Our goal is to quantify the deviation between the two limiting dynamics and to assess the sensitivity of the model to imperfections in the unitary coupling,
as is the case in current NISQ (Noisy Intermediate State Quantum Computing) hardware.

\subsection{Alternating unitary operators}\label{sec:section2a}

We again consider $\Hh_0=\Hh=\CC^2$, 
a Hamiltonian~$\Hg_{0}$ on~$\Hh_0$ and 
an observable~$\Ag$ on~$\Hh$ satisfying Assumption~\ref{assu:NonDiag}.
The initial state~$\sigma_0$ lives in~$\Ss(\Hht)$ and 
we consider  the random sequences $(\sigma_k)_{k \in \NN}$ and $(\widetilde{\sigma}_k)_{k \in \NN}$ as in Section~\ref{sec:rep_measurement_model},
but where the unitary operator now alternates according to the parity of~$k$:
$$
\Ug_k :=
\begin{cases}
\Ug^+, & \text{if } k \text{ is even}, \\
\Ug^-, & \text{if } k \text{ is odd},
\end{cases}
$$
for two fixed unitaries $\Ug^-, \Ug^+$.
The following theorem is the main result of this alternating-operator mechanism and echoes
Theorem~\ref{thm:QuantumTraj_Disc}.

\begin{theorem}\label{thm:QuantumTraj_Modified}
For all $k \geq 0$, conditionally on $\{\widetilde{\sigma}_k = \theta_k\}$, 
the probability of observing the eigenvalue~$\lambda_j$ at step $k+1$ is
$$
\PP[\widetilde{\sigma}_{k+1} = \theta_j \mid \widetilde{\sigma}_k = \theta_k] = \Tr\left[(\Ig \otimes \Pg_j^{k+1}) \Ug_{k} (\theta_k \otimes \beta) \Ug_{k}^\dagger (\Ig \otimes \Pg_j^{k+1})\right],
$$
where the post-measurement state is given by
$$
\theta_j = \frac{\Pg_j^{k+1} \Ug_{k+1} (\theta_k \otimes \beta) \Ug_{k+1}^\dagger  \Pg_j^{k+1}}
{\Tr\left[\Pg_j^{k+1} \Ug_{k+1} (\theta_k \otimes \beta) \Ug_{k+1}^\dagger  \Pg_j^{k+1}\right]},
\qquad\text{for each }j=1,\ldots, \mf.
$$
\end{theorem}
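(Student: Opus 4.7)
The plan is to mirror the derivation in Pellegrini~\cite{pellegrini2008existence} that underlies Theorem~\ref{thm:QuantumTraj_Disc}, observing at the outset that the original argument nowhere uses the fact that the coupling unitary is \emph{the same} at every step. What it does use is that each $\Ug_k$ acts non-trivially only on the bipartite space $\Hh_0 \otimes \Hh_k$ and as the identity on $\Hh_i$ for $i\neq k$. This property is preserved in our alternating model, since both $\Ug^+$ and $\Ug^-$ are unitaries on $\Hh_0\otimes \Hh$ that are extended in the same fashion.

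First I would set up the analogue of the Pellegrini recursion for the alternating case: define $\Vg_0 = \Ig$ and $\Vg_{k+1} = \Ug_{k+1}\Vg_k$, with $\Ug_{k+1}$ now equal to the ampliation of $\Ug^+$ or $\Ug^-$ according to the parity of $k+1$, then set $\sigma_k = \Vg_k \sigma_0 \Vg_k^\dagger$. Because $\Ug_j$ acts trivially on $\Hh_k$ whenever $j\neq k$, the projector $\Pg^{k+1}_{i_{k+1}}$ commutes with every $\Ug_j$ for $j\le k$, exactly as in the fixed-unitary case; this is the single algebraic ingredient required. Using this, one writes the non-normalised post-measurement state $\widetilde{\sigma}(i_1,\dots,i_{k+1})$ by pulling the projectors through the $\Ug_j$'s and rearranging, obtaining
\begin{equation*}
\widetilde{\sigma}(i_1,\dots,i_{k+1})
= \Pg^{k+1}_{i_{k+1}}\Ug_{k+1}\bigl(\widetilde{\sigma}(i_1,\dots,i_k)\otimes\beta\bigr)\Ug_{k+1}^\dagger \Pg^{k+1}_{i_{k+1}},
\end{equation*}
after invoking the initial product structure of $\sigma_0$ over the fresh environmental unit $\Hh_{k+1}$.

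Second, I would apply Born's rule to compute the conditional probability. By definition, $\PP[\Lambda_{i_1,\dots,i_{k+1}}\mid \Lambda_{i_1,\dots,i_k}] = \Tr[\widetilde{\sigma}(i_1,\dots,i_{k+1})]/\Tr[\widetilde{\sigma}(i_1,\dots,i_k)]$. The normalised state at step $k$ is $\theta_k = \widetilde{\sigma}(i_1,\dots,i_k)/\Tr[\widetilde{\sigma}(i_1,\dots,i_k)]$, so dividing the displayed identity by $\Tr[\widetilde{\sigma}(i_1,\dots,i_k)]$ yields the desired trace formula and, upon normalising, the stated expression for $\theta_j$. The Markov property in disguise is that the right-hand sides depend on the history only through $\theta_k$, which is why conditioning on $\{\widetilde{\sigma}_k=\theta_k\}$ is equivalent to conditioning on the full cylinder event.

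There is essentially no hard step: the proof is a straight transcription of Pellegrini's argument with $\Ug_{k+1}$ standing for either $\Ug^+$ or $\Ug^-$. The only point requiring a brief verification is the commutation $[\Ug_j,\Pg^{k+1}_\cdot]=0$ for $j\le k$, since one must check it separately for $\Ug^+$ and $\Ug^-$; but this is immediate from the tensor factorisation of their ampliations. In particular, periodicity plays no role at the level of this theorem — it will only become relevant later when taking continuous-time limits, where the scaling of $\Ug^\pm(n)$ and the averaging over consecutive pairs of steps will matter.
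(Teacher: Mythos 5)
Your proof is correct and is precisely the argument the paper implicitly relies on: Theorem~\ref{thm:QuantumTraj_Modified} is stated without proof as an ``echo'' of Theorem~\ref{thm:QuantumTraj_Disc}, and your observation that Pellegrini's derivation only uses the tensor factorisation of each ampliated $\Ug_k$ (hence its commutation with the projectors on the other environment units), never the constancy of the coupling, is exactly why the statement carries over verbatim to the alternating case. The only blemishes are notational: $\widetilde{\sigma}(i_1,\dots,i_k)\otimes\beta$ should not carry an extra $\otimes\beta$, since the fresh unit's state $\beta$ is already contained in $\sigma_0$ (that product structure over $\Hh_{k+1}$ is what lets the relevant reduced state factor as $\theta_k\otimes\beta$ on $\Hh_0\otimes\Hh_{k+1}$), and your consistent use of $\Ug_{k+1}$ in the probability formula quietly corrects what appears to be an index typo ($\Ug_k$) in the theorem as printed.
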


Define now the sequence $(\rho_{k})_k:=(\EE_0[\sigma_k])_k$, similarly to~\eqref{eq:discrete_trajectory_single_U} as well as the states $\rho^{0},\rho^{1}$
as in~\eqref{eq:L0 and L1}, using~$\Ug^+$ and~$\Ug^-$ respectively as well as the probabilities~$p_{k+1}$, $q_{k+1}$ and the normalised random variable $X_{k+1}$ as in~\eqref{eq:def_X_k}, 
adapted to the present sequence~$(\Ug_k)$. 
It is straightforward to see that the discrete dynamics~\eqref{eq:discrete_evolution} still hold, namely
$$
\rhopm_{k+1} = 
\rhokz + \rhoko
 - \left(
\sqrt{\frac{q_{k+1}}{p_{k+1}}} \rhokz
- \sqrt{\frac{p_{k+1}}{q_{k+1}}} \rhoko
\right) X_{k+1},
\qquad\text{for all }k\geq 0.
$$

Given two operators $\Cg^+, \Cg^-$ on~$\Hh_0$,
define the sequence $(\Ug_k(n))_k$ by
$\Ug_{2k}(n)=\Ug^+(n)$ and $\Ug_{2k+1}(n)=\Ug^-(n)$ 
where $\Ug^{\pm}(n)=\sum_{i,j=0}^1\Ug_{ij}^{\pm}(n)\otimes e_i e_j^\dagger$ 
are unitary operators on~$\Hht$,
and assume as in~\eqref{eq:L00} that, as~$n$ tends to infinity,
\begin{equation}
\Ug_{00}^{\pm}(n) = \Ig - \frac{1}{n} \left(\I \Hg_{0} + \half \Cg^{\pm} (\Cg^{\pm})^\dagger \right) + \oon
\qquad\text{and}\qquad
\Ug_{10}^{\pm}(n) = \frac{1}{\sqrt{n}} \Cg^{\pm} + \oon.
\end{equation} 
We write $\Lg^{+}$ and $\Lg^{-}$ the two Lindblad generators linked to $\Cg^+$ and $\Cg^-$ and $(\rhopm_k(n))_k$ the random sequence of states on $\Hh_0$ constructed by repeated quantum interactions (with $\Ug^+(n)$ and $\Ug^-(n)$) and measurements. 
Under Assumption~\ref{assu:NonDiag}, 
Proposition~\ref{prop:increment_of_rho_k} is then updated as
\begin{subequations}
\begin{align}
\rhopm_{2k+1} - \rhopm_{2k}
& = \frac{\Theta^{-}(\rhopm_{2k})
+ o(1)}{\sqrt{n}} X_{2k+1} + \frac{1}{n} \Lg^{-}(\rhopm_{2k}) + \oon,\label{eq :alternating increment of rho_k 1}\\
\rhopm_{2k+2} - \rhopm_{2k+1}
& = \frac{\Theta^{+}(\rhopm_{2k+1})
+ o(1)}{\sqrt{n}} X_{2k+2} + \frac{1}{n} \Lg^{+}(\rhopm_{2k+1}) + \oon,\label{eq :alternating increment of rho_k 2}
\end{align}
\end{subequations}
for all $k\geq 0$, where $\Theta^{\pm}(\rho) := \Cg^\pm \rho + \rho (\Cg^\pm)^\dagger
- \Tr\left[ \rho(\Cg^\pm + (\Cg^\pm)^\dagger) \right] \rho$.

\begin{remark}
Similarly to Proposition~\ref{prop:increment_of_rho_k}, 
a parameter~$\gamma$
(depending on~$\Ag$ and  $\{e_{0},e_{1}\}$) has emerged  in~\ref{eq :alternating increment of rho_k 1} and~\ref{eq :alternating increment of rho_k 2}, 
and we normalise it  to~$1$ as in Remark~\ref{rem:gamma1}.
\end{remark}

\begin{theorem}\label{Convergence in E with two Cs}
Under Assumption~\ref{assu:NonDiag}, 
the sequence $\left(t \mapsto \EE[\rho_{\lfloor nt \rfloor}(n)]\right)_{n\geq 1}$ 
converges in $L^\infty([0, 1])$ as~$n$ tends to infinity to 
the unique solution of the ODE
$$
\frac{\D \phi_t}{\D t} = 
\frac{\Lg^{+}(\phi_t) + \Lg^{-}(\phi_t)}{2}, 
\quad\text{starting from }\phi_0 = \rho_0 \in \Ss(\Hh_0).
$$
\end{theorem}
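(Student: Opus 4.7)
The plan is to adapt the expectation argument of Theorem~\ref{thm:Convergence_in_E} by taking expectations in the alternating one-step formulas to kill the centred martingale terms, and then to pair an odd and an even step so that the symmetrised generator $\frac{1}{2}(\Lg^{+}+\Lg^{-})$ emerges as the effective driver of a discrete Euler scheme with step $2/n$.

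First, set $\rhob_k(n):=\EE[\rho_k(n)]$. Because $\EE[X_{k+1}\mid\Ff_k]=0$ and the coefficients $\Theta^{\pm}(\rho_{2k})+o(1)$ as well as the Lindbladian parts in~\eqref{eq :alternating increment of rho_k 1}--\eqref{eq :alternating increment of rho_k 2} are $\Ff_{2k}$-measurable, taking expectations and using linearity of $\Lg^{\pm}$ yields
\begin{align*}
\rhob_{2k+1}-\rhob_{2k} &= \frac{1}{n}\Lg^{-}(\rhob_{2k}) + \oon,\\
\rhob_{2k+2}-\rhob_{2k+1} &= \frac{1}{n}\Lg^{+}(\rhob_{2k+1}) + \oon,
\end{align*}
the remainders being uniform in $k$ by compactness of $\Ss(\Hh_0)$ and continuous dependence of the expansions in Proposition~\ref{prop:increment_of_rho_k} on the state.

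Second, substitute $\rhob_{2k+1}=\rhob_{2k}+O(1/n)$ into the argument of $\Lg^{+}$ (valid because $\Lg^{+}$ is linear, hence Lipschitz on the compact set $\Ss(\Hh_0)$) and add the two displays to obtain
$$
\rhob_{2k+2}-\rhob_{2k} = \frac{2}{n}\cdot\frac{\Lg^{+}+\Lg^{-}}{2}(\rhob_{2k}) + \oon,
$$
which is exactly the explicit Euler scheme of step $h=2/n$ for the ODE $\dot\phi_t=\frac{1}{2}(\Lg^{+}+\Lg^{-})(\phi_t)$. The Lipschitz character of $\Lg^{+}+\Lg^{-}$ gives existence, uniqueness and $C^{1}$-regularity of $\phi$ by Cauchy--Lipschitz, and the classical Euler error estimate (discrete Grönwall applied over the $\lfloor n/2\rfloor$ pairs of steps) produces
$$
\max_{0\le k\le \lfloor n/2\rfloor}\bigl\lVert\rhob_{2k}(n)-\phi_{2k/n}\bigr\rVert \xrightarrow[n\to\infty]{} 0.
$$

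Third, pass from the doubled grid to arbitrary times: for any $t\in[0,1]$ and $m:=\lfloor nt\rfloor$, write $\rhob_{m}(n)=\rhob_{2\lfloor m/2\rfloor}(n)+O(1/n)$ by the first step, and note $|t-2\lfloor m/2\rfloor/n|\le 2/n$; combined with uniform continuity of $\phi$ this gives $\sup_{t\in[0,1]}\lVert\rhob_{\lfloor nt\rfloor}(n)-\phi_t\rVert\to 0$, which is the claimed convergence in $L^{\infty}([0,1])$.

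The main obstacle, and the only place where a nontrivial estimate is needed, is uniformity of the $\oon$ remainders in $k\in\{0,\dots,n\}$; this is not a new difficulty compared with Theorem~\ref{thm:Convergence_in_E}, as it follows from the compactness of $\Ss(\Hh_0)$ together with the fact that, outside the degenerate regime of Remark~\ref{rem:Proba01}, the coefficients appearing in the expansions of Proposition~\ref{prop:increment_of_rho_k} (applied separately to $\Ug^{+}$ and $\Ug^{-}$) are smooth in $(\rho,1/\sqrt n)$. The pairing trick itself is a completely elementary splitting-step observation, so all the analytic work is already encoded in the single-unitary version.
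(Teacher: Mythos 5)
Your proof is correct and follows essentially the same route as the paper's: take expectations to kill the centred martingale terms, pair an odd and an even step to obtain an Euler scheme for the symmetrised generator $\frac{1}{2}(\Lg^{+}+\Lg^{-})$, and conclude by discrete Gr\"onwall. The only (minor, favourable) difference is that you explicitly handle the passage from the even-indexed grid $2k/n$ to arbitrary times $\lfloor nt\rfloor$, a point the paper's proof leaves implicit.
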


\begin{proof}
Fix $k\geq 1$.
Taking expectation in~\eqref{eq :alternating increment of rho_k 1} and~\eqref{eq :alternating increment of rho_k 2},
writing $\rhob_k := \EE[\rhopm_{k}]$
and using the fact that the process~$(X_k)$ is centered conditionally to $\Ff_k$, we obtain
$$
\rhob_{2k+1} - \rhob_{2k}
= \frac{1}{n} \Lg^{-}(\rhob_{2k}) + \oon
\qquad\text{and}\qquad
\rhob_{2k+2} - \rhob_{2k+1}
= \frac{1}{n} \Lg^{+}(\rhob_{2k+1}) + \oon.
$$
Since $\Lg^{+}$ and $\Lg^{-}$ are linear and continuous, then
\begin{align*}
\rhob_{2k+2} & = \rhob_{2k} + \frac{1}{n}\Big(\Ug^{-}(\rhob_{2k}) + \Lg^{+}(\rhob_{2k}) + \Lg^{+}(\rhob_{2k+1}-\rhob_{2k})\Big) + \oon,\\
 & = \rhob_{2k} + \frac{1}{n}\Big(\Lg^{+}(\rhob_{2k}) + \Lg^{-}(\rhob_{2k})\Big) + \oon.
\end{align*}
We note $\phi$ the solution of the ODE with the initial condition $\phi_0=\rho_0$. Then
$$
\phi_{\frac{2k+2}{n}} = \phi_{\frac{2k}{n}}+\frac{1}{n}\Big(\Lg^{+}(\phi_{\frac{2k}{n}})+\Lg^{-}(\phi_{\frac{2k}{n}})\Big) + \oon,
$$
which leaves us with 
$$
\rhob_{2k+2} - \phi_{\frac{2k+2}{n}}
= \left(\rhob_{2k} - \phi_{\frac{2k}{n}}\right)
+ \frac{1}{n} \left(
\Lg^{+}(\rhob_{2k} - \phi_{\frac{2k}{n}}) +
\Lg^{-}(\rhob_{2k} - \phi_{\frac{2k}{n}})
\right)
+ o\left( \frac{1}{n} \right).
$$
Let $K \in \RR^+$ such that for all $\rho\in\Ss(\Hh_0)$, $\lVert \Lg^{+}(\rho) \rVert + \lVert \Lg^{-}(\rho) \rVert \leq K \lVert \rho \rVert$. 
Let $v_k:=\lVert \rhob_{2k}-\phi_{\frac{2k}{n}}\rVert$;
then $v_{k+1}\leq (1+\frac{K}{n})v_k+\frac{\alpha_n}{n}$ where $(\alpha_n)$ converges to zero.
Gronwall's lemma~\cite{clark1987gronwall} then implies that 
$v_k \leq \frac{\alpha_n}{K}(1+\frac{K}{n})^k$ for all~$k$.
Choosing $k=\lfloor \frac{nt}{2}\rfloor$, since
$(1+\frac{K}{n})^k$ converges to $\E^{\frac{Kt}{2}}$, 
the supremum $v_{\lfloor nt \rfloor}$ can be controlled and converges to zero, proving the theorem.
\end{proof}

Using~\eqref{eq :alternating increment of rho_k 1} and~\eqref{eq :alternating increment of rho_k 2} and denoting $\phi_k(n) := \rhopm_{2k}(n)$, we obtain
\begin{align*}
    \phi_{k+1}(n)
    &= \phi_k(n)+\frac{1}{n}\Big(\Lg^{+}\left(\phi_k(n)\right)
     + \Lg^{-}\left(\phi_k(n)\right)\Big) + \oon\\
    &+ \frac{1}{\sqrt{n}}\Big\{\Big(\Theta^{+}\left(\phi_k(n)\right)+o(1)\Big)X_{2k+1}
     + \Big(\Theta^{-}\left(\phi_k(n)\right)+o(1)\Big)X_{2k+2}\Big\}.
\end{align*}
In particular, setting $k=\lfloor \frac{nt}{2}\rfloor$, we have
\begin{align*}
\phi_{\lfloor \frac{nt}{2}\rfloor}(n)
 &= \rho_0 + \sum_{i=0}^{\lfloor \frac{nt}{2}\rfloor-1}\frac{1}{n}\Big(\Lg^{+}(\phi_i(n))+\Lg^{-}(\phi_i(n))\Big) +\oon\\
 &+ \sum_{i=0}^{\lfloor \frac{nt}{2}\rfloor-1}\left[
 \Big(\Theta^{+}(\phi_i(n)+o(1)\Big)\frac{X_{2k+1}}{\sqrt{n}}+\Big(\Theta^{-}(\phi_i(n))+o(1)\Big)\frac{X_{2k+2}}{\sqrt{n}}\right].
\end{align*}
Introduce the processes
\begin{equation*}
\begin{array}{rlr@{\;}ll}
W_n^{-}(t) & := \displaystyle \frac{1}{\sqrt{n}}\sum_{k=0}^{\lfloor \frac{nt}{2}\rfloor-1}X_{2k+2},
 & \qquad\displaystyle V_n(t) & := \displaystyle \frac{\lfloor \frac{nt}{2}\rfloor}{n}, \\
W_n^{+}(t) & := \displaystyle \frac{1}{\sqrt{n}}\sum_{k=0}^{\lfloor \frac{nt}{2}\rfloor-1}X_{2k+1},
& \qquad\phi_n(t) & := \displaystyle \phi_{2\lfloor \frac{nt}{2}\rfloor}(n),
\end{array}
\end{equation*}
so that 
\begin{align}
\phi_n(t)
 & =  \rho_0+\eps_n(t)+\int_0^t \Big(\Lg^{+}(\phi_n(s^-))+\Lg^{-}(\phi_n(s^-)\Big)\D V_n(s)\notag\\
 & \qquad\qquad\quad + \int_0^t\Theta^{+}(\phi_n(s^-))\D W_n^{+}(s)+\int_0^t\Theta^{-}(\phi_n(s^-))\D W_n^{-}(s),\notag \\
  & = \rho_0 + \eps_n(t)
 + \int_0^t \Big(\Lg^{+}(\phi_n(s^-))+\Lg^{-}(\phi_n(s^-)\Big)\D V_n(s)
 + \int_0^t\Theta(\phi_n(s^-))\cdot\D W_n(s)\label{eq : phi_n t U alter},
\end{align}
with~$\eps_n$ defined in an obvious way,
$\Theta(\rho):=\left ( \Theta^{+}(\rho), \Theta^{-}(\rho) \right )$ and $W_n(t):=\left ( W_n^{+}(t), W_n^{-}(t) \right )^{\top}$. 
The function~$\phi_n$ is c\`adl\`ag and  the integrals involving \( \phi_n(s^-) \) are well defined as Riemann–Stieltjes integrals.
The following are required to prove the convergence of $(W_n)$.

\begin{definition}
Let $B\in\Ss_d^+(\RR)$. 
We call~$M$ a $(0,B)$-Brownian motion if $M=\widetilde{B}W$ 
for some standard Brownian motion~$W$ in $\RR^d$ with $\widetilde{B}\in\Ss_m^+(\RR)$ such that $\widetilde{B}^2=B$.
\end{definition}

\begin{theorem}[Theorem~2.1 in~\cite{whitt2007martingale}]\label{th:whitt}
    For $n\in\NN^*$, let $M_n=(M_{n,1}, \dots,M_{n,d})^{\top}$ be a local martingale in $\DD([0,1],\RR^d)$ starting from~$0$ for each~$n$, and $B\in \Ss_d^+(\RR)$. 
    If
    $$
\lim_{n\uparrow\infty}\EE\left[\sup_{0\leq t\leq 1 } \lVert M_n(t)-M_n(t^-)\rVert_1\right]=0
$$
and 
$[M_{n,i},M_{n,j}]_t \Rightarrow tB_{ij}$ for all $i,j\in\{1,\dots,d\}$, $t\in [0,1]$,
then $(M_n) \Rightarrow M$ in $\DD([0,1],\RR^d)$.
\end{theorem}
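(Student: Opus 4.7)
The plan is to establish weak convergence via the standard two-step approach for functional limit theorems: first prove tightness of $(M_n)$ in $\DD([0,1],\RR^d)$, then identify every subsequential limit as the $(0,B)$-Brownian motion~$M$, so that uniqueness of the limit law yields $M_n \Rightarrow M$.

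For tightness, I would handle each coordinate separately and then conclude tightness of the vector process from componentwise tightness together with continuity of all subsequential limits. Each $M_{n,i}$ is a local martingale starting at zero, so by the Burkholder--Davis--Gundy inequality together with the assumed convergence $[M_{n,i}]_t \Rightarrow t B_{ii}$ (which implies tightness of $[M_{n,i}]_1$), the sequence $\sup_{0\leq t\leq 1} |M_{n,i}(t)|$ is tight in $\RR$. The modulus-of-continuity control then follows from Aldous' criterion: for any bounded stopping times $\tau_n \leq \tau_n+\delta_n$ with $\delta_n \to 0$,
\[
\EE\Big[\big(M_{n,i}(\tau_n+\delta_n)-M_{n,i}(\tau_n)\big)^2\Big]
\leq \EE\big[[M_{n,i}]_{\tau_n+\delta_n}-[M_{n,i}]_{\tau_n}\big],
\]
which vanishes because $[M_{n,i}]$ converges to the deterministic continuous limit $t\mapsto tB_{ii}$. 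The vanishing-jump hypothesis $\EE[\sup_t \|M_n(t)-M_n(t^-)\|_1] \to 0$ forces every subsequential limit to live in $\Cc([0,1],\RR^d)$, so tightness in the stronger uniform topology on continuous paths is obtained.

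For identification, fix a subsequential limit $M$ and pass to a probability space on which the convergence is almost sure. Continuity of $M$ is built in by the previous step. I would first verify that $M$ is a martingale in its natural filtration: the $L^2$-boundedness granted by BDG makes $\{M_n(t):n\geq 1\}$ uniformly integrable at each $t\in[0,1]$, so the defining martingale identity of $M_n$ tested against bounded continuous functions of finitely many past coordinates passes to the limit. Next, since $M_{n,i}M_{n,j}-[M_{n,i},M_{n,j}]$ is a local martingale for each $n$, the assumed joint convergence $[M_{n,i},M_{n,j}]_t \Rightarrow tB_{ij}$ together with uniform integrability of the products $M_{n,i}(t)M_{n,j}(t)$ implies that $M_i(t)M_j(t)-tB_{ij}$ is a martingale in the limit. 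The multivariate L\'evy characterisation then identifies the continuous $\RR^d$-valued martingale $M$ with $\langle M_i,M_j\rangle_t = tB_{ij}$ as a $(0,B)$-Brownian motion, and uniqueness of the limit law promotes subsequential convergence to full convergence.

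The main obstacle is the passage from weak convergence of the brackets to the martingale property of $M_iM_j-tB_{ij}$ in the limit: one needs uniform integrability of the (unbounded) products $M_{n,i}(t)M_{n,j}(t)$, and the vanishing-jump hypothesis is precisely what both rules out any jump contribution to the limiting bracket and enforces path continuity of $M$. The delicate technical point is justifying the exchange of limit and conditional expectation when only local-martingale structure is available on the prelimit side; a standard localisation argument via stopping times reduces matters to genuine $L^2$-martingales with uniformly bounded brackets, after which the remaining steps are routine applications of the martingale CLT machinery.
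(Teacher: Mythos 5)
This statement is not proved in the paper: it is quoted verbatim as Theorem~2.1 of Whitt's survey \emph{Proofs of the martingale FCLT} and used as a black box, so there is no internal proof to compare against. Your outline does, however, coincide in strategy with the proof given in that cited source (which itself follows Ethier--Kurtz, Theorem~7.1.4): localisation to square-integrable martingales, tightness via control of the maximal process and of oscillations by the predictable/optional brackets, $\Cc$-tightness from the vanishing-jump hypothesis, and identification of the limit through the martingale problem and the multivariate L\'evy characterisation. As a roadmap it is the right one.

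That said, several individual steps do not follow from the hypotheses as you have written them, and they are exactly where the real work in Whitt's proof lies. First, BDG requires a moment bound $\sup_n \EE\bigl[[M_{n,i}]_1\bigr]<\infty$; weak convergence $[M_{n,i}]_1 \Rightarrow B_{ii}$ gives only tightness of the brackets, not integrability, so neither the tightness of $\sup_t|M_{n,i}(t)|$ nor the Aldous bound $\EE\bigl[[M_{n,i}]_{\tau_n+\delta_n}-[M_{n,i}]_{\tau_n}\bigr]\to 0$ is available directly --- one must instead use Lenglart's domination inequality, or localise first. Second, the localisation you defer to the end is not routine here: stopping at $\tau_n^K=\inf\{t:[M_n]_t\geq K\}$ bounds the stopped bracket only by $K$ plus the overshoot $|\Delta M_n(\tau_n^K)|^2$, and the hypothesis controls the maximal jump only in $L^1$, not its square; Whitt handles this by an additional truncation of the large jumps (splitting off an auxiliary martingale carrying them), which is the technically heavy part of the argument and is absent from your sketch. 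Third, uniform integrability of the products $M_{n,i}(t)M_{n,j}(t)$ needs more than $L^2$-boundedness of the factors (it needs UI of the squares, e.g.\ $L^{2+\varepsilon}$ bounds after truncation), so the passage from $M_{n,i}M_{n,j}-[M_{n,i},M_{n,j}]$ being a local martingale to $M_iM_j-tB_{ij}$ being a martingale in the limit requires this truncation to have been carried out first. None of these are wrong turns --- they are repairable by the devices just named --- but as written the proof has genuine gaps at precisely the points the cited reference spends most of its effort on.
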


We can now state the desired convergence, using the filtration $\Ff^n_t := \sigma(X_i|i\leq 2\lfloor\frac{nt}{2}\rfloor)$.

\begin{proposition}\label{cv of hat W_n}
The sequence $(W_n)_n$ converges in distribution to $\frac{1}{\sqrt{2}}W$ where $W$ is a two-dimensional Brownian motion.
\end{proposition}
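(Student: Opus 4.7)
The plan is to apply Theorem~\ref{th:whitt} to the two-dimensional c\`adl\`ag martingale $W_n = (W_n^+, W_n^-)^\top$, after verifying its three hypotheses and identifying the limiting covariance matrix. First I will check the martingale and initial-value conditions, then control the jumps, and finally compute the limiting quadratic covariations.

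The martingale property is immediate: since $\EE[X_{k+1}\mid\Ff_k]=0$ and each $W_n^\pm$ is, by construction, an $\Ff_t^n$-adapted piecewise-constant process with increments proportional to $X_k$, both components are true martingales starting from~$0$. For the jump bound, recall that $X_k$ takes the two values $-\sqrt{q_k/p_k}$ and $\sqrt{p_k/q_k}$ with probabilities $p_k$ and $q_k$, so uniform control of $|X_k|$ reduces to showing that $p_k$ and $q_k$ stay bounded away from $0$ and~$1$ uniformly in $k$ and $n$. This is the crux of the argument: expanding $\rho_k^0$ as in Section~\ref{sec:rep_measurement_model} yields, to leading order, $p_k = \langle e_0, \Pg_0 e_0\rangle + O(n^{-1/2})$ (and analogously for $q_k$), and under Assumption~\ref{assu:NonDiag} the constants $\langle e_0, \Pg_j e_0\rangle$ lie strictly in $(0,1)$ (a projection with non-zero off-diagonal entry has diagonal entries strictly between $0$ and~$1$). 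Hence $|X_k|\leq C$ uniformly, and $\sup_{t\in [0,1]}\|W_n(t)-W_n(t^-)\|_1 \leq 2C/\sqrt{n}\to 0$, so the first hypothesis of Theorem~\ref{th:whitt} holds.

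For the quadratic covariations, $W_n^\pm$ are pure-jump piecewise-constant processes jumping simultaneously at times $2\ell/n$, so a direct computation gives
\begin{equation*}
[W_n^+, W_n^+]_t = \frac{1}{n}\sum_{\ell=0}^{\lfloor nt/2\rfloor-1}X_{2\ell+1}^2,
\quad
[W_n^-, W_n^-]_t = \frac{1}{n}\sum_{\ell=0}^{\lfloor nt/2\rfloor-1}X_{2\ell+2}^2,
\end{equation*}
\begin{equation*}
[W_n^+, W_n^-]_t = \frac{1}{n}\sum_{\ell=0}^{\lfloor nt/2\rfloor-1}X_{2\ell+1}X_{2\ell+2}.
\end{equation*}
Using $\EE[X_{k+1}^2\mid\Ff_k]=1$ together with the uniform boundedness of $X_k$, the terms $X_{2\ell+i}^2-1$ form a bounded martingale-difference sequence, so a direct $L^2$ estimate (or Chebyshev) yields $[W_n^\pm, W_n^\pm]_t\to t/2$ in probability. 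Likewise, $\EE[X_{2\ell+1}X_{2\ell+2}\mid\Ff_{2\ell+1}]=X_{2\ell+1}\EE[X_{2\ell+2}\mid\Ff_{2\ell+1}]=0$ shows the cross-products are martingale differences, whence $[W_n^+, W_n^-]_t\to 0$ in probability. The limiting covariance matrix is therefore $B=\Id/2$, and Theorem~\ref{th:whitt} delivers $W_n\Rightarrow (\Id/\sqrt{2})W = W/\sqrt{2}$, where $W$ is a standard two-dimensional Brownian motion.

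The main obstacle is the uniform-boundedness step: if Assumption~\ref{assu:NonDiag} failed (diagonal $\Ag$), one of $p_k, q_k$ would degenerate at rate $1/n$, producing an unbounded $X_k$ and a Poisson-type limit as in Lemma~\ref{lemma : A diagonal poisson case} rather than a Brownian one. Everything else is a standard martingale-CLT computation, and no delicate stochastic-analysis machinery is required beyond Whitt's criterion.
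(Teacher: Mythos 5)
Your proposal is correct and follows the same strategy as the paper: verify the martingale property of $W_n^{\pm}$, check the two hypotheses of Theorem~\ref{th:whitt}, and identify the limiting covariance $B=\frac{1}{2}\Id$ via the quadratic covariations $[W_n^{\pm},W_n^{\pm}]_t\to t/2$ and $[W_n^{+},W_n^{-}]_t\to 0$ (the latter from $\EE[X_{2\ell+2}\mid\Ff_{2\ell+1}]=0$). The one place where you genuinely deviate is the moment control. The paper routes both the jump condition and the $L^2$ convergence of the brackets through a uniform fourth-moment bound (Lemma~\ref{lemma:sup_EE_X_k^4_with_alter_U}), whereas you observe that $X_k$ takes only the two values $-\sqrt{q_k/p_k}$ and $\sqrt{p_k/q_k}$, and that the expansion $p_{k+1}=\pf_{00}+\Oo(n^{-1/2})$, $q_{k+1}=\qf_{00}+\Oo(n^{-1/2})$ (uniform in $k$ and $\omb$ since $\lVert\rho_k\rVert\leq 1$) together with $\pf_{00},\qf_{00}\in(0,1)$ under Assumption~\ref{assu:NonDiag} gives $|X_k|\leq C$ almost surely for $n$ large. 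This is a slightly stronger fact obtained from the same expansion, and it buys you a deterministic jump bound $\sup_t\lVert W_n(t)-W_n(t^-)\rVert_1\leq 2C/\sqrt{n}$ and bounded martingale differences for the bracket estimates, making the separate fourth-moment lemma unnecessary; the paper's version is marginally more robust in that it would survive settings where only moment bounds, not almost-sure bounds, are available. Your closing remark correctly identifies why the non-diagonality assumption is the crux: without it one of $\pf_{00},\qf_{00}$ degenerates and the scaling produces a Poisson rather than a Brownian limit.
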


The following simple result will be needed in the proof of the proposition:
\begin{lemma}\label{lemma:sup_EE_X_k^4_with_alter_U}
Under Assumption~\ref{assu:NonDiag}, 
$\sup_{k\in \NN^*}\EE[X_k^4]$ is finite.
\end{lemma}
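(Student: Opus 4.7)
The first step is to reduce the problem to a pointwise bound on $p_k q_k$. Since $\nu_k$ is $\{0,1\}$-valued with $\PP(\nu_k=0)=p_k$ and $\PP(\nu_k=1)=q_k=1-p_k$, the random variable $X_k = (\nu_k - q_k)/\sqrt{p_k q_k}$ takes values $-\sqrt{q_k/p_k}$ and $\sqrt{p_k/q_k}$ with probabilities $p_k$ and $q_k$. A direct computation (using $p_k^3 + q_k^3 = 1 - 3 p_k q_k$) yields
\begin{equation*}
\EE[X_k^4] \;=\; p_k \cdot \frac{q_k^2}{p_k^2} + q_k \cdot \frac{p_k^2}{q_k^2} \;=\; \frac{p_k^3 + q_k^3}{p_k q_k} \;=\; \frac{1}{p_k q_k} - 3.
\end{equation*}
Hence boundedness of $\EE[X_k^4]$ in $k$ is equivalent to $p_k q_k$ being bounded below by a positive constant uniformly in $k$.

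Next, I will expand $p_{k+1}$ using the block form of $\Ug^\pm(n)$. As in the derivation preceding~\eqref{eq : og mu k+1},
\begin{equation*}
p_{k+1} \;=\; \Tr\bigl[(\Ig\otimes\Pg_0)\Ug_k(n)(\rho_k\otimes\beta)\Ug_k(n)^\dagger\bigr]
\;=\; \sum_{i,j=0}^1 \langle e_j,\Pg_0 e_i\rangle \,\Tr\bigl[\Ug_{j0}^{\pm}(n)^\dagger\Ug_{i0}^{\pm}(n)\rho_k\bigr],
\end{equation*}
and similarly for $q_{k+1}$ with $\Pg_1$ instead of $\Pg_0$. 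Substituting $\Ug_{00}^{\pm}(n) = \Ig + O(1/n)$ and $\Ug_{10}^{\pm}(n) = O(1/\sqrt{n})$ together with $\|\rho_k\|\le 1$ (since $\rho_k\in\Ss(\Hh_0)$) gives
\begin{equation*}
p_{k+1} \;=\; \langle e_0,\Pg_0 e_0\rangle + O(1/\sqrt{n}),
\qquad
q_{k+1} \;=\; \langle e_0,\Pg_1 e_0\rangle + O(1/\sqrt{n}),
\end{equation*}
where the $O(1/\sqrt{n})$ remainders are controlled by constants depending only on the operator norms of $\Cg^\pm$ and $\Hg_0$, hence uniform in $k$.

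Under Assumption~\ref{assu:NonDiag}, both $\langle e_0,\Pg_0 e_0\rangle$ and $\langle e_0,\Pg_1 e_0\rangle = 1 - \langle e_0,\Pg_0 e_0\rangle$ lie strictly in $(0,1)$. Setting $\delta := \tfrac{1}{2}\min\bigl(\langle e_0,\Pg_0 e_0\rangle,\langle e_0,\Pg_1 e_0\rangle\bigr)$, the uniform expansion above shows that for $n$ sufficiently large, $p_k q_k \geq \delta(1-\delta)$ for every $k\geq 1$, yielding
\begin{equation*}
\sup_{k\in\NN^*}\EE[X_k^4] \;\leq\; \frac{1}{\delta(1-\delta)} - 3 \;<\; +\infty.
\end{equation*}
The only subtle point is ensuring the $O(1/\sqrt{n})$ estimate for $p_{k+1}$ is genuinely uniform in $k$; this follows because the estimate depends on $\rho_k$ only through $\|\rho_k\|\le 1$, so no Gronwall-type argument on the trajectory is needed. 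The small-$n$ regime can be handled separately: for any fixed $n$ large enough to be in the asymptotic regime, the uniform lower bound on $p_k q_k$ takes care of the supremum.
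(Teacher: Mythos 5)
Your proof is correct and follows essentially the same route as the paper: both compute $\EE[X_{k+1}^4]$ explicitly as $\frac{p_{k+1}^2}{q_{k+1}}+\frac{q_{k+1}^2}{p_{k+1}}$ and then invoke the expansion $p_{k+1}=\pf_{00}+\Oo(1/\sqrt{n})$, $q_{k+1}=\qf_{00}+\Oo(1/\sqrt{n})$ (which the paper already records in~\eqref{eq : pk+1 and qk+1 with U alter}), with uniformity in $k$ coming from $\lVert\rho_k\rVert\le 1$ and positivity of $\pf_{00},\qf_{00}$ from Assumption~\ref{assu:NonDiag}. Your rewriting via $\frac{1}{p_kq_k}-3$ and your explicit handling of the uniform lower bound on $p_kq_k$ are just minor repackagings of the same argument.
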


\begin{proof}
Let $k\in\NN$ and $\Ff_k:=\sigma(X_l|l\leq k)$. 
With~\eqref{eq : pk+1 and qk+1 with U alter}, 
$$
\EE[X_{k+1}^4|\Ff_k] = \frac{p_{k+1}^2}{q_{k+1}}+\frac{q_{k+1}^2}{p_{k+1}}
 = \frac{\pf_{00}^2}{\qf_{00}}+\frac{\qf_{00}^2}{\pf_{00}}  + \Oo\left(\frac{1}{\sqrt{n}}\right),
$$
where the $\Oo\left(\frac{1}{\sqrt{n}}\right)$ term is uniform in~$k$.
\end{proof}

\begin{proof}[Proof of Proposition~\ref{cv of hat W_n}]
We first check that $(W_n^{+})$ and $(W_n^{-})$ are $(\Ff^n)$-martingales.
Recall that for all $k\in \NN$, with $\Ff_k=\sigma(X_i|i\leq k)$,
$\EE[X_{k+1}|\Ff_k]=0$
and $\EE[X_{k+1}^2|\Ff_k]=1$.
Therefore,
$$
\EE\left[W_n^{+}(t)|\Ff^n_s\right] = W_n^{+}(s)+\sum_{k=\lfloor \frac{ns}{2}\rfloor }^{\lfloor \frac{nt}{2}\rfloor-1}\EE\left[X_{2k+1}|\Ff^n_s\right] = W_n^{+}(s),
\qquad\text{for all }0\leq s\leq t,
$$
and $\EE\left[W_n^{+}(t)^2\right] = \frac{\lfloor\frac{nt}{2}\rfloor}{n}\leq \frac{t}{2}$.
Regarding the second condition of Theorem~\ref{th:whitt},
$$
\left[W_n^{+}, W_n^{+}\right]_t = \frac{1}{n}\sum_{k=0}^{\lfloor \frac{nt}{2}\rfloor-1} X_{2k+1}^2
\qquad\text{and}\qquad \left[W_n^{-}, W_n^{-}\right]_t = \frac{1}{n}\sum_{k=0}^{\lfloor \frac{nt}{2}\rfloor-1} X_{2k+2}^2,
$$
for any $t\in [0,1]$,
so that
\begin{align*}
& \EE\left[\left([W_n^{+}, W_n^{+}]_t-\frac{\lfloor \frac{nt}{2}\rfloor}{n}\right)^2\right]\\
& = \frac{1}{n^2}\left\{\sum_{k=0}^{\lfloor \frac{nt}{2}\rfloor-1}\EE\left[\left(X_{2i+1}^2-1\right)^2\right]
+ \sum_{i,j=0,i\neq j}^{\lfloor \frac{nt}{2}\rfloor-1}
\EE\left[\left(X_{2i+1}^2-1\right)\left(X_{2j+1}^2-1\right)\right]\right\} \leq C \frac{\lfloor \frac{nt}{2}\rfloor}{n^2},
\end{align*}
where $C:=\sup_{m\in \NN}\EE[(X_m^2-1)^2]$ is well defined by Lemma~\ref{lemma:sup_EE_X_k^4_with_alter_U}.
Thus
$([W_n^z, W_n^z]_t)_n$ converges in~$L^2$ (hence in distribution) to $\frac{t}{2}$ for $z\in\{-,+\}$.
Now, 
$[W_n^{+},W_n^{-}]_t=\frac{1}{n}\sum_{k=0}^{\lfloor \frac{nt}{2}\rfloor-1}X_{2k+1}X_{2k+2}$,
which implies that
\begin{align*}
\lim_{n\uparrow \infty}
\EE\left[[W_n^{+},W_n^{-}]_t^2\right]
 & = 
\lim_{n\uparrow \infty}
\frac{1}{n^2}\left\{\sum_{k=0}^{\lfloor \frac{nt}{2}\rfloor-1} \EE\left[X_{2k+1}^2 X_{2k+2}^2\right]
  + \sum_{i,j=0, i\ne j}^{\lfloor \frac{nt}{2}\rfloor-1}\EE[X_{2i+1}X_{2i+2}X_{2j+1}X_{2j+2}]\right\}\\
   & = \lim_{n\uparrow \infty}\frac{\lfloor \frac{nt}{2}\rfloor}{n^2} = 0,
\end{align*}
since $\EE[X_{2j+2}|\Ff_{2j+1}]=0$ for all $j$.
Regarding the first condition in Theorem~\ref{th:whitt}, 
the only possible jumps in~$W_n$ are the terms $\delta W_{n,k} := \frac{1}{\sqrt{n}}(X_{2k+1}, X_{2k+2})^{\top}$, so that
$$
\EE\left[\sup_{0\leq t\leq 1} \left\lVert W_n(t)-W_n(t^-)\right\rVert_1\right]
 = \EE\left[\sup_{0\leq k \leq \lfloor\frac{n}{2}\rfloor-1} \left\lVert \delta W_{n,k}\right\rVert_1\right].
 $$
Now, by convexity and with Lemma~\ref{lemma:sup_EE_X_k^4_with_alter_U},
\begin{align*}
\EE\left[\left(\sup_{0\leq k \leq \lfloor\frac{n}{2}\rfloor-1} \left\lVert \delta W_{n,k}\right\rVert_1\right)^4\right]
 & = \frac{1}{n^2}\EE\left[\sup_{0\leq k \leq \lfloor\frac{n}{2}\rfloor-1} \Big(|X_{2k+1}|+|X_{2k+2}|\Big)^4\right] \\
 &\leq \frac{8}{n^2}\EE\left[\sup_{0\leq k \leq \lfloor\frac{n}{2}\rfloor-1} \Big(|X_{2k+1}|^4+|X_{2k+2}|^4\Big)\right]\\
&\leq \frac{16\lfloor\frac{n}{2}\rfloor}{n^2}\sup_{k\in\NN^*}\EE[X_k^4].
\end{align*}
Therefore $\lim\limits_{n\uparrow\infty}\EE[\sup_{0\leq t\leq 1} \lVert W_n(t)-W_n(t^-)\rVert_1]=0$
and the proposition follows.
\end{proof}

Before stating the final theorem of this section, we show that the expected limiting process of our sequence of states is well defined.
\begin{lemma}\label{lemma : uniqueness of solution with alter U}
Let $W$ a two-dimensional Brownian motion.
The SDE
\begin{equation}\label{eq:SDE_alter_U}
\rhopm_t = \rho_0+\half \int_0^t 
\Big(\Lg^{+}(\rhopm_s)+\Lg^{-}(\rhopm_s)\Big)\D s
+ \frac{1}{\sqrt{2}}\int_0^t \Theta(\rhopm_s)\cdot\D W(s),
\qquad\text{for }t \in [0,1],
\end{equation}
with $\rho_0 \in \Ss(\Hh_0)$,
admits a unique strong solution in $\Ss(\Hh_0)$.
\end{lemma}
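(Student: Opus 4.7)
The plan is to adapt the proof of Theorem~\ref{thm:SDE} from~\cite{pellegrini2008existence} (single operator $\Cg$, one-dimensional Brownian driver) to the present two-operator setting with two-dimensional Brownian driver. First I would observe that the drifts $\Lg^\pm$ are linear in $\rho$ and hence globally Lipschitz, while the diffusion coefficients $\Theta^\pm$ are polynomials of degree two in $\rho$ because of the trace-normalisation term $\Tr[\rho(\Cg^\pm+(\Cg^\pm)^\dagger)]\rho$. These are only \emph{locally} Lipschitz: on every closed ball $B_M:=\{\rho:\|\rho\|\leq M\}$ their Lipschitz constants are of order~$\Oo(M)$. Classical Picard iteration then yields a unique strong solution on the stochastic interval $[0,\tau_M)$, with $\tau_M:=\inf\{t\geq 0:\|\rho_t\|>M\}$.

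Second, I would verify that $\rho_t$ remains in $\Ss(\Hh_0)$ for $t<\tau_M$. Hermitianness is immediate: the coefficients $\Lg^\pm$ and $\Theta^\pm$ map Hermitian matrices to Hermitian matrices, so $\rho_t-\rho_t^\dagger$ solves a linear SDE with zero initial condition and vanishes almost surely. Trace preservation follows by a direct Itô computation, using the trace-preserving structure of the Lindbladians $\Lg^\pm$ and the identity $\Tr[\Theta^\pm(\rho)]\propto 1-\Tr[\rho]$, so that $t\mapsto\Tr[\rho_t]$ satisfies a linear SDE identically solved by the constant process~$1$.

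Third, the main obstacle is positivity. Following~\cite{pellegrini2008existence}, I would introduce the auxiliary \emph{linear} SDE
\begin{equation*}
\D\widetilde{\rho}_t = \half\bigl(\Lg^+(\widetilde{\rho}_t)+\Lg^-(\widetilde{\rho}_t)\bigr)\D t + \frac{1}{\sqrt{2}}\bigl(\Cg^+\widetilde{\rho}_t+\widetilde{\rho}_t(\Cg^+)^\dagger\bigr)\D W^+_t + \frac{1}{\sqrt{2}}\bigl(\Cg^-\widetilde{\rho}_t+\widetilde{\rho}_t(\Cg^-)^\dagger\bigr)\D W^-_t,
\end{equation*}
starting from $\widetilde{\rho}_0=\rho_0$. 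This SDE has globally Lipschitz coefficients and admits a unique strong solution. A cone-preserving Itô argument (applied for instance to the smallest eigenvalue of~$\widetilde{\rho}_t$, or via pathwise approximation by a positivity-preserving scheme, exactly as in~\cite{pellegrini2008existence}) shows that $\widetilde{\rho}_t\succeq 0$ almost surely. The trace $Z_t:=\Tr[\widetilde{\rho}_t]$ is then a positive continuous local martingale with $Z_0=1$ and never vanishes, and Itô's formula applied to $\widetilde{\rho}_t/Z_t$ shows that this normalised process solves~\eqref{eq:SDE_alter_U}. By local pathwise uniqueness it must coincide with $\rho_t$, so $\rho_t\succeq 0$ throughout $[0,\tau_M)$.

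Finally, since every element of $\Ss(\Hh_0)$ satisfies $\|\rho\|\leq 1$, choosing $M>1$ forces $\tau_M=\infty$ almost surely, promoting the local solution into a unique global strong solution taking values in $\Ss(\Hh_0)$. The delicate step, as foreshadowed, is the cone-preservation argument for the auxiliary linear SDE; once this is in place, the rest of the argument is standard and transfers verbatim from~\cite{pellegrini2008existence}, the only new ingredient being that the two independent noise components $W^+$ and $W^-$ contribute independent quadratic variation terms which enter additively in the Itô computations.
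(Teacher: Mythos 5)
Your route is genuinely different from the paper's. The paper's own proof is a three-line argument: it observes that $\Lg^{\pm}$ are linear, that $\Theta^{\pm}$ are quadratic hence $\Cc^\infty$, that all four maps have at most linear growth on $\Ss(\Hh_0)$, and then uses compactness of $\Ss(\Hh_0)$ to upgrade local Lipschitzness to Lipschitzness before citing \cite[Theorem~5.2.1]{oksendal2003sde}. In other words, the paper takes the invariance of $\Ss(\Hh_0)$ essentially for granted (it is this invariance that justifies restricting the coefficients to a compact set in the first place), whereas you reconstruct the full Pellegrini-style argument: local solution, Hermitianity and trace preservation by pathwise uniqueness, positivity via the linear stochastic master equation, and globality from boundedness of $\Ss(\Hh_0)$. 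Your version proves strictly more -- it actually establishes that the solution is a state -- at the cost of the one delicate step you correctly identify.

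That step, however, contains a concrete inaccuracy as written. The normalised process $\widetilde{\rho}_t/Z_t$ does \emph{not} solve~\eqref{eq:SDE_alter_U} driven by the original Brownian motion: writing $m^{\pm}_t := \Tr\bigl[\rho_t(\Cg^{\pm}+(\Cg^{\pm})^\dagger)\bigr]$ with $\rho_t=\widetilde{\rho}_t/Z_t$, It\^o's formula produces, besides the expected martingale part $\frac{1}{\sqrt{2}}\Theta^{+}(\rho_t)\D W^{+}_t+\frac{1}{\sqrt{2}}\Theta^{-}(\rho_t)\D W^{-}_t$, the additional drift $-\half\bigl(m^{+}_t\Theta^{+}(\rho_t)+m^{-}_t\Theta^{-}(\rho_t)\bigr)\D t$, coming from the covariation of $\widetilde{\rho}$ with $1/Z$ and from $\D\langle Z\rangle$. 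The normalised process therefore solves the nonlinear equation only with respect to the shifted noises $W^{\pm}_t-\frac{1}{\sqrt{2}}\int_0^t m^{\pm}_s\,\D s$, and one must perform the Girsanov change of measure $\D\PP = Z_T\,\D\mathbb{Q}$ (under which these shifts become Brownian motions) before any identification is possible. Consequently the conclusion cannot be reached by ``local pathwise uniqueness'' against the solution driven by the original $W$; one needs uniqueness in law via Yamada--Watanabe, exactly as in \cite{pellegrini2008existence}. With that correction the argument goes through; everything else in your proposal (Hermitianity, the trace computation $\Tr[\Theta^{\pm}(\rho)]=m^{\pm}(1-\Tr[\rho])$, the non-vanishing of $Z$ as a stochastic exponential with bounded integrand, and the globality from $\|\rho\|\leq 1$) is sound.
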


\begin{proof}
$\Lg^{+}$ and $\Lg^{-}$ are linear continuous maps on $\Ss(\Hh_{0})$, $\Theta^{+}$ and $\Theta^{-}$ are quadratic continuous. But for all $\rho\in \Ss(\Hh_{0})$, using that $\lVert \rho \rVert \leq 1$, we obtain
$$
\lVert \Theta^{+}(\rho)\rVert \leq \lVert \rho \rVert \left(\lVert \Cg^+ \rVert+ \lVert (\Cg^+)^\dagger \rVert\right) + \left|\Tr\left[\rho(\Cg^++(\Cg^+)^\dagger)\right]\right| \leq K(1+\lVert \rho \rVert),
$$
where $K$ is a positive constant that depends of~$\Cg^+$. 
Hence, all four functions satisfy at most linear growth on~$\Ss(\Hh_{0})$. 
We already know~$\Lg^{+}$ and~$\Lg^{-}$ are Lipschitz, and~$\Theta^{+}$ and~$\Theta^{-}$ are locally Lipschitz (since they are in $\Cc^\infty$). 
By compactness of $\Ss(\Hh_0)$, the four functions are Lipschitz 
and the lemma follows from~\cite[Theorem~5.2.1]{oksendal2003sde}.
\end{proof}

\begin{theorem}\label{th: cv SDE with alternating U}
Under Assumption~\ref{assu:NonDiag}, 
$(\rhopm_{2\lfloor\frac{n\cdot}{2}\rfloor}(n))_n$ converges in distribution to~\eqref{eq:SDE_alter_U}.
\end{theorem}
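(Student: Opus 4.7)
The plan is to pass to the limit in the semimartingale decomposition~\eqref{eq : phi_n t U alter}, combining the joint weak convergence of the driving pair $(V_n, W_n)$ with a Kurtz--Protter-type stability theorem for stochastic integrals, and then invoking uniqueness from Lemma~\ref{lemma : uniqueness of solution with alter U} to identify the limit.

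First I would settle the joint convergence of the drivers. The deterministic time change $V_n(t) = \lfloor nt/2\rfloor/n$ converges uniformly on $[0,1]$ to $t/2$, while Proposition~\ref{cv of hat W_n} gives $W_n \Rightarrow W/\sqrt{2}$ in $\DD([0,1],\RR^2)$; since $V_n$ is deterministic, joint convergence $(V_n, W_n) \Rightarrow (t/2, W/\sqrt{2})$ is automatic. Moreover, the proof of Proposition~\ref{cv of hat W_n} already shows that the predictable quadratic variation of $W_n$ is uniformly bounded on $[0,1]$ and that the maximal jump of $W_n$ vanishes in $L^1$; combined with the uniform fourth-moment bound on $X_k$ from Lemma~\ref{lemma:sup_EE_X_k^4_with_alter_U}, this delivers the uniform tightness (UT / "good sequence") property needed to commute weak convergence and stochastic integration with state-dependent integrands.

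Second, I would check that the remainder $\eps_n$ in~\eqref{eq : phi_n t U alter} is asymptotically negligible. The deterministic $o(1/n)$ contributions accumulate over at most $\lfloor n/2\rfloor$ steps to $o(1)$ uniformly in $t$. The stochastic residuals, of the form $o(1)\,X_k/\sqrt{n}$ with $\Ff^n_{k-1}$-measurable $o(1)$ coefficients, form a martingale difference sequence, so Doob's maximal inequality together with $\EE[X_k^2]=1$ yields $\sup_{t\in[0,1]}\|\eps_n(t)\| \Rightarrow 0$.

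Finally, I would close the argument. Since $\phi_n$ takes values in the compact set $\Ss(\Hh_0)$, the sequence $(\phi_n)$ is tight in $\DD([0,1],\Ss(\Hh_0))$, and the coefficients $\Lg^\pm$ and $\Theta^\pm$ are Lipschitz on $\Ss(\Hh_0)$ (as already used in the proof of Lemma~\ref{lemma : uniqueness of solution with alter U}). The Kurtz--Protter stability theorem for SDEs driven by good sequences of semimartingales then implies that every weak subsequential limit of $(\phi_n)$ solves~\eqref{eq:SDE_alter_U}, the factor $1/\sqrt{2}$ in front of $\Theta$ being inherited from the rescaling $W_n \Rightarrow W/\sqrt{2}$; pathwise uniqueness from Lemma~\ref{lemma : uniqueness of solution with alter U} then forces the whole sequence to converge in distribution to this unique solution. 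The main obstacle in this programme is verifying the UT property for $(W_n)$ at a level strong enough that the state-dependent integrand $\Theta(\phi_n(s^-))$, which itself carries an $o(1)$ random perturbation, can be passed to the limit; this is handled via a BDG-type estimate exploiting the uniform fourth-moment bound on $X_k$ and the compactness of $\Ss(\Hh_0)$.
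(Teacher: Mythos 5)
Your proposal follows essentially the same route as the paper: both verify that $Y_n=(V_n,W_n^{+},W_n^{-})$ is a \emph{good} sequence of semimartingales via the quadratic-variation and jump bounds from Proposition~\ref{cv of hat W_n} and Lemma~\ref{lemma:sup_EE_X_k^4_with_alter_U}, apply the Kurtz--Protter stability result (Proposition~\ref{prop:KurtzWong}) to the decomposition~\eqref{eq : phi_n t U alter}, and identify the limit through the uniqueness statement of Lemma~\ref{lemma : uniqueness of solution with alter U}. Your explicit treatment of the negligibility of $\eps_n$ is a welcome detail the paper leaves implicit, but it does not change the argument.
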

The proof relies on the following, which ensures that convergence of the integrands leads to convergence of the solutions of the corresponding SDEs.
\begin{proposition}[Proposition~1 in~\cite{kurtz1991wong}]\label{prop:KurtzWong}
    Let $f : \RR^{d} \xrightarrow{}\Mm_{d,d'}(\RR)$ be bounded and continuous,  $(U_n, Y_n)$ be an $(\Ff^n)$-adapted process in $\DD([0,1],\RR^{d}\times\RR^{d'})$ and $(Y_n)$ a \textit{good} sequence of semimartingales with $(U_n, Y_n)\Rightarrow (U,Y)$.
    If for each $n$, $X_n$ is the solution to
    $X_n(t) = U_n(t)+\int_0^tf(X_n(s^-))\D Y_n(s)$,
    then the sequence ${(X_n,Y_n)}$ is tight and any limit point X satisfies $X(t)=U(t)+\int_0^tf(X(s^-))\D Y(s)$.
\end{proposition}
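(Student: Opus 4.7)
The plan is to prove Proposition~\ref{prop:KurtzWong} in three phases: (i) tightness of $(X_n)$ in $\DD([0,1],\RR^d)$; (ii) extraction of a weakly convergent subsequence via Prokhorov's theorem and Skorokhod representation; and (iii) identification of the limit via continuity of stochastic integration against a good sequence of semimartingales. For the tightness step, I would exploit the assumption that $(Y_n)$ is good: by definition, each $Y_n$ admits a semimartingale decomposition $Y_n = M_n + A_n$ (possibly after localisation) for which the quadratic variations $[M_n, M_n]_1$ and the total variations of $A_n$ on $[0,1]$ form a stochastically bounded family. Because $f$ is bounded by some constant~$K$, the Burkholder-Davis-Gundy inequality controls the $L^2$-norm of the martingale part of $Z_n := \int_0^\cdot f(X_n(s^-))\D Y_n(s)$, while the finite-variation part is bounded in $L^1$ by $K$ times the total variation of $A_n$. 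Analogous estimates over stopping-time intervals $[\tau_n, (\tau_n+\delta)\wedge 1]$ yield the Aldous modulus-of-continuity criterion for $(Z_n)$, and tightness of $X_n = U_n + Z_n$ follows since $(U_n)$ is weakly convergent and therefore tight.

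Second, Prokhorov's theorem applied to the joint family $(U_n, X_n, Y_n)$ gives relative compactness in $\DD([0,1], \RR^d \times \RR^d \times \RR^{d'})$, and Skorokhod representation allows me to pass to a subsequence along which $(U_n, X_n, Y_n) \to (U, X, Y)$ almost surely in the $J_1$ topology on a common probability space. The core of the argument is then to identify the limit $X$ by passing to the limit in $X_n(t) = U_n(t) + \int_0^t f(X_n(s^-))\D Y_n(s)$, which requires establishing
\[
\int_0^\cdot f(X_n(s^-))\D Y_n(s) \;\Longrightarrow\; \int_0^\cdot f(X(s^-))\D Y(s)
\]
in $\DD([0,1],\RR^d)$. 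The strategy I would pursue is to approximate $f(X_n(\cdot-))$ by simple piecewise-constant processes on a fixed mesh, for which the stochastic integral reduces to a finite linear combination of $Y_n$-increments; Skorokhod convergence and continuity of $f$ then give convergence of these simple-integral approximations directly, and the approximation error is controlled using the good-sequence bounds together with BDG applied to $M_n$ and standard $L^1$-estimates on $A_n$.

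The main obstacle is this third step: Skorokhod convergence $X_n \to X$ and $Y_n \to Y$ does not in general imply convergence of the stochastic integrals, because jumps of $X_n$ and $Y_n$ may desynchronise from those of the limits, producing spurious contributions through products of nearby jumps. The \emph{good sequence} property of~\cite{kurtz1991wong} is precisely the minimal structural hypothesis ruling out such pathologies; its careful exploitation---via stopping-time localisation, truncation of large jumps in the semimartingale decomposition, and uniform BDG estimates on the martingale parts---constitutes the technical heart of the argument. Once the integral convergence is established, evaluating at continuity points of the limit triple $(U, X, Y)$ concludes that $X$ satisfies $X(t) = U(t) + \int_0^t f(X(s^-))\D Y(s)$ almost surely, and tightness of $(X_n, Y_n)$ as a joint sequence is a consequence of the marginal tightness statements established along the way.
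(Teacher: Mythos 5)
The first thing to note is that the paper offers no proof of this statement at all: it is imported verbatim as Proposition~1 of Kurtz and Protter~\cite{kurtz1991wong} and used as a black box, so your attempt can only be measured against the original source. Your three-phase outline (tightness from the goodness bounds and boundedness of $f$; Prokhorov plus Skorokhod representation; identification of the limit via convergence of the stochastic integrals) does reproduce the broad architecture of the Kurtz--Protter argument, and you correctly identify where the real difficulty lives, namely that Skorokhod convergence of $(X_n,Y_n)$ alone does not give convergence of $\int f(X_n(s^-))\,\D Y_n(s)$ and that the goodness hypothesis is exactly what rules out the jump-desynchronisation pathologies.

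There are, however, two genuine gaps. First, your tightness step asserts that tightness of $X_n = U_n + Z_n$ ``follows since $(U_n)$ is weakly convergent and therefore tight''; this is false as stated, because addition is not continuous for the $J_1$ topology and marginal tightness of $(U_n)$ and $(Z_n)$ in $\DD([0,1],\RR^d)$ does not imply tightness of the sum (jumps of the two summands can approach each other and create oscillations with no $J_1$ limit). What is needed is tightness of the \emph{pair} $(U_n,Z_n)$ as a $\DD([0,1],\RR^{2d})$-valued sequence, obtained for instance from joint Aldous-type estimates; the same subtlety recurs in your step (ii), where the identification of the limit requires convergence of $(U_n,X_n,Y_n)$ in $\DD([0,1],\RR^{d}\times\RR^{d}\times\RR^{d'})$ with a single common time change, not merely in the product of the separate Skorokhod spaces. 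Second, the third step is only gestured at: the simple-process approximation must be shown to be accurate \emph{uniformly in $n$}, which requires controlling the oscillation of $f(X_n)$ between (random) mesh points using the localised goodness bounds and a truncation of the large jumps of the finite-variation parts $A_n$ --- this is precisely the content of Kurtz and Protter's integral-convergence theorem, and since the proposition you are proving is essentially that theorem specialised to SDE solutions, deferring it to an unexecuted ``careful exploitation'' leaves the attempt incomplete as a standalone proof, even though it is a faithful roadmap of the original argument.
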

This proposition requires the notion of a \textit{good} sequence~\cite[Proposition~1.5]{memin_slominski_stability}:
Let $(Y_n)_n$ be a sequence of semimartingales with values in $\DD([0,1],\RR^d)$ with decomposition $Y_n = M_n + A_n$ with~$M_n$ a local martingale 
and~$A_n$ a finite-variation process. 
The sequence~$(Y_n)_n$ is called \textit{good} if for each $\alpha>1$, there exist stopping times $\{\tau^\alpha_n\}_n$ with $\PP(\tau^\alpha_n\geq \alpha)\leq \frac{1}{\alpha}$ such that
\begin{equation}\label{def of goodness}
\sup_{n\in\NN}\EE\Big[[M_{n,i},M_{n,i}]_{t\wedge\tau^\alpha_n}+T_t(A_{n,i})\Big]\text{ is finite for all }i\in\{1,\dots,d\},
\quad\text{for all }t>0,
\end{equation}
where $T_t(A_{n,i})$ is the total variation of~$A_{n,i}$ up to time~$t$.
We can now prove Theorem~\ref{th: cv SDE with alternating U}.

\begin{proof}[Proof of Theorem~\ref{th: cv SDE with alternating U}]
The process $Y_n:=(V_n,W_n^{+}, W_n^{-})^{\top}$ is a $(\Ff^n)$-semimartingale 
since~$V_n$ is of finite variation and~$W_n^{+}, W_n^{-}$ are two $(\Ff^n)$-martingales.
With Proposition~\ref{cv of hat W_n}, $(\eps_n+\rho_0, Y_n) \Rightarrow (\rho_0,Y)$ where $Y(t)=(\frac{t}{2}, \frac{1}{\sqrt{2}}B^o(t), \frac{1}{\sqrt{2}}B^e(t))^{\top}$ and $(B^o, B^e)^{\top}$ is a two-dimensional Brownian motion.
The map $f(\rho):=(\Lg^{+}(\rho)+\Lg^{-}(\rho), \Theta^{+}(\rho), \Theta^{-}(\rho))$ 
is continuous and bounded on~$\Ss(\Hh_{0})$. 
The proof thus follows from Proposition~\ref{prop:KurtzWong} by checking the \textit{goodness} condition~\eqref{def of goodness}.
For all $\alpha>1$, let $(\tau^\alpha_n)_n=(\frac{1+\alpha}{2})_n$, 
so that $t\wedge \tau^\alpha_n=t$. 
With Proposition~\ref{cv of hat W_n}, a decomposition of the semimartingale~$Y_n$ is 
$Y_n = M_n + A_n$ where $M_n=(0, W_n^{+}, W_n^{-})^{\top}$ 
and $A_n=(V_n, 0, 0)^{\top}$. We also proved with Proposition~\ref{cv of hat W_n} that $([W_n^i, W_n^i]_t)$ converges in $L^1(\RR)$ for all $i\in\{+,-\}$. 
Hence 
$\sup_{n\in\NN}\EE[[M_{n,i},M_{n,i}]_{t\wedge \tau^\alpha_n}]$ is finite for all $i\in\{1,2,3\}$. 
Also, the only jumps in $A_n$ are the terms $\frac{1}{n}(1, 0, 0)$ and occur $\lfloor\frac{nt}{2}\rfloor$ times up to time~$t$. 
Then 
$T_t(A_n)=(T_t(A_{n,i}))_{i\in\{1,2,3\}}=
\begin{pmatrix}
\frac{\lfloor\frac{nt}{2}\rfloor}{n}& 0 & 0
\end{pmatrix}$,
and therefore
$\sup_{n\in\NN}\EE[T_t(A_{n,i})]\leq \frac{t}{2}$
for all $i\in\{1,2,3\}$.
Then, with Proposition~\ref{prop:KurtzWong}, $(\rho_{2\lfloor\frac{nt}{2}\rfloor}(n))_n$ converges in distribution to the unique strong solution~$\rho$ to~\eqref{eq:SDE_alter_U}. 
Since~$\Ss(\Hh_{0})$ is closed, so it 
$\DD([0,1],\Ss(\Hh_{0}))$ and clearly $(\rho_{2\lfloor\frac{nt}{2}\rfloor}(n))_{t\geq0} \in \DD([0,1],\Ss(\Hh_{0}))$
for all $n\in \NN$, so that $(\rho_t)_{t\geq0} \in \DD([0,1],\Ss(\Hh_{0}))$.
\end{proof}

\subsection{Robustness under unitary perturbation}\label{section 2b}
We now investigate how the discrepancy between two unitary operators—one exact, the other perturbed—impacts the evolution of quantum trajectories. 
Specifically, we analyse the difference between the solutions of the stochastic differential equations arising from two repeated interaction and measurement models, each associated with one of the respective unitaries.
From a practical standpoint, this approach is well motivated: in quantum information processing, the application of an ideal unitary~$\Ug$ on a system is affected (especially in current NISQ hardware technology) by imperfections, so that the implemented operation is in fact a noisy channel close--but not equal--to the noiseless unitary channel $\rho \mapsto \Ug\rho \Ug^\dagger$.
We assume that $\Hh_0=\CC^2$ and $\Hh=\CC^8$, 
take $e_{0}\in \CC^4$ be a unitary vector and set $\beta=e_{0}e_{0}^\dagger$. 
We complete~$e_{0}$ into an orthonormal basis $\{e_0,\ldots,e_3\}$
and let~$\Ag$ be an observable on~$\CC^4$. 

In quantum computing, the notion of \emph{noise} is usually 
introduced via quantum channels, which are convex, linear, completely positive and trace preserving operators on the space~$\Ss(\Hh_{0})$ of density operators on~$\Hh_{0}$.
We refer the reader to~\cite[Section~8.2]{nielsen_chuang_2000} for full details,
but recall in particular the following result by Kraus~\cite{kraus1983states},
based on the Choi decomposition~\cite{choi1975completely}:
\begin{theorem}
Any quantum operation~$\Ee$ on a system of Hilbert space~$\Hh$ of dimension~$d$ has the form
$\Ee(\rho) = \sum_{k=0}^{r-1}\Kg_i \rho \Kg_i^\dagger$,
where $(\Kg_i)_{i=0,\ldots,r-1}$ is a collection of (Kraus) operators on~$\Hh$ such that $\sum_{i=0}^{r-1}\Kg_i^\dagger \Kg_i = \Ig$ and $1\leq r\leq d^2$ is called the Kraus rank.
\end{theorem}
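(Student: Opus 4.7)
The plan is to establish the Kraus decomposition via the Choi–Jamiolkowski correspondence. Fix an orthonormal basis $\{\ket{i}\}_{i=0}^{d-1}$ of~$\Hh$ and introduce the Choi matrix
$$
C_\Ee := (\Ee \otimes \Id)\bigl(\ket{\Omega}\bra{\Omega}\bigr),
\qquad \ket{\Omega} := \sum_{i=0}^{d-1} \ket{i} \otimes \ket{i} \in \Hh \otimes \Hh,
$$
where $\Id$ is the identity map on operators on~$\Hh$. Expanding $\ket{\Omega}\bra{\Omega}$ yields the equivalent expression $C_\Ee = \sum_{i,j=0}^{d-1}\Ee(\ket{i}\bra{j}) \otimes \ket{i}\bra{j}$, which shows that $C_\Ee$ completely determines~$\Ee$ by linearity. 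Since $\ket{\Omega}\bra{\Omega} \geq 0$ and $\Ee$ is completely positive, $\Ee \otimes \Id$ preserves positive semi-definiteness, so $C_\Ee$ is Hermitian positive semi-definite on~$\Hh\otimes\Hh$, a space of dimension~$d^2$.

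The spectral theorem then gives $C_\Ee = \sum_{k=0}^{r-1} \ket{w_k}\bra{w_k}$ for vectors $\ket{w_k} \in \Hh \otimes \Hh$ (the eigenvectors rescaled by the square roots of the positive eigenvalues), with $r = \mathrm{rank}(C_\Ee) \leq d^2$. I would then invoke the vec isomorphism to associate to each $\ket{w_k}$ a unique operator~$\Kg_k$ on~$\Hh$ characterised by $\ket{w_k} = (\Kg_k \otimes \Id)\ket{\Omega}$; explicitly, the matrix entries of~$\Kg_k$ are given by $\bra{j}\Kg_k\ket{i} = (\bra{j}\otimes\bra{i})\ket{w_k}$.

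Substituting this into the spectral decomposition, a direct calculation gives
$$
C_\Ee
 = \sum_{k=0}^{r-1} (\Kg_k \otimes \Id)\ket{\Omega}\bra{\Omega}(\Kg_k^\dagger \otimes \Id)
 = \sum_{i,j}\left(\sum_{k=0}^{r-1}\Kg_k\ket{i}\bra{j}\Kg_k^\dagger\right)\otimes \ket{i}\bra{j},
$$
and comparing the second tensor factor against the previous expansion of~$C_\Ee$ yields $\Ee(\ket{i}\bra{j}) = \sum_{k=0}^{r-1}\Kg_k \ket{i}\bra{j} \Kg_k^\dagger$. By linearity this extends to $\Ee(\rho) = \sum_{k=0}^{r-1}\Kg_k \rho \Kg_k^\dagger$ for every $\rho\in\Ss(\Hh)$. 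The completeness relation then follows from trace preservation: for every~$\rho$,
$$
\Tr[\rho] = \Tr[\Ee(\rho)] = \Tr\left[\left(\sum_{k=0}^{r-1}\Kg_k^\dagger \Kg_k\right) \rho\right],
$$
so the Hermitian operator $\sum_k \Kg_k^\dagger \Kg_k - \Ig$ pairs to zero against every density matrix and must therefore vanish, giving $\sum_{k=0}^{r-1}\Kg_k^\dagger\Kg_k=\Ig$.

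The main obstacle will be the Choi–Jamiolkowski step, since the definitions of $C_\Ee$ and of the~$\Kg_k$ are basis-dependent: one must verify that the resulting formula for~$\Ee$ is genuinely basis-free, which follows from the explicit matching on the canonical basis combined with the linearity of~$\Ee$, together with the observation that different spectral decompositions of~$C_\Ee$ differ only by an isometry that recombines the $\Kg_k$ without altering the sum $\sum_k \Kg_k\rho\Kg_k^\dagger$. The rank bound $r \leq d^2$ is then forced by $\dim(\Hh\otimes\Hh) = d^2$, while $r\geq 1$ is automatic unless $\Ee \equiv 0$.
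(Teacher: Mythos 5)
Your proof is correct: it is the standard Choi-matrix argument (positivity of $C_\Ee=(\Ee\otimes\Id)(\ket{\Omega}\bra{\Omega})$ from complete positivity, spectral decomposition into at most $d^2$ rank-one terms, the vec isomorphism to extract the $\Kg_k$, and trace preservation to get $\sum_k\Kg_k^\dagger\Kg_k=\Ig$). The paper itself gives no proof — it cites Kraus and Choi and explicitly describes the theorem as ``based on the Choi decomposition'' — so your argument simply supplies, correctly, exactly the route the paper delegates to those references; the closing discussion of basis-independence and isometric freedom is sound but not needed for the statement as given.
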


\begin{example}[Noise models]
The following are standard noise models~\cite[Section~8.3]{nielsen_chuang_2000} 
(where $\Xg, \Yg,\Zg$ are the usual Pauli gates) with $p\in [0,1]$:
\begin{enumerate}[(a)]
\item \textit{Bit flip channel:} 
$r=1$, $\Kg_{0} = \sqrt{p}\,\Ig$, $\Kg_{1} = \sqrt{1-p}\,\Xg$, 
so that $\Ee(\rho) = p \rho + (1-p)\Ig$;
\item \textit{Phase flip channel:} 
$r=1$, $\Kg_{0} = \sqrt{p}\,\Ig$, $\Kg_{1} = \sqrt{1-p}\,\Zg$, 
so that $\Ee(\rho) = p \rho + (1-p)\Zg\rho\Zg^\dagger$;
\item \textit{Bit-phase flip channel:} 
$r=1$, $\Kg_{0} = \sqrt{p}\,\Ig$, $\Kg_{1} = \sqrt{1-p}\,\Yg$, 
so that $\Ee(\rho) = p \rho + (1-p)\Yg\rho\Yg^\dagger$;
\item \textit{Completely depolarising channel:} 
$r=4$, $\Kg_{0} = \sqrt{1-\frac{3p}{4}}\,\Ig$, $\Kg_{1} = \frac{\sqrt{p}}{2}\Xg$, 
$\Kg_{2} = \frac{\sqrt{p}}{2}\Yg$, $\Kg_{3} = \frac{\sqrt{p}}{2}\Zg$, 
so that $\Ee(\rho) = (1-p)\rho+\frac{p}{2}\Ig$.
\end{enumerate}
\end{example}

We therefore consider from now on a noise model of the form 
\begin{equation}\label{eq:NoiseModelChannel}
\Ee(\rho) = \Omega^\eps(\rho)
:= (1-\eps) \rho +  \eps \Omega(\rho),
\qquad\text{for }\rho \in \Ss(\Hh_0),
\end{equation}
for $\eps\in [0,1]$, where $\Omega$ is a given quantum channel,
and hence so is~$\Omega^\eps$. 
We wish to find a unitary operator~$\Ug^{\eps}(n)$
on $\Hh_0\otimes\Hh$ such that the sequence $(\rho_{k}^{\eps}(n))$ (resulting from the repeated interactions and  measurements with $\Ug^{\eps}(n)$ and~$\Ag$) verifies 
$$
\lim_{n\uparrow\infty}
\sup_{0\leq t \leq 1} \left\|\EE\left[\rho^\eps_{\lfloor nt \rfloor}(n)\right] -\phi_t^\eps\right\|=0,
\qquad\text{with } \phi_t^\eps=\rho_0+\int_0^t \Omega^\eps(\phi_s)\D s, \text{ for }t \in [0,1].
$$

\begin{lemma}\label{lemma : U eps as wanted}
Let $\displaystyle \Ug^{\eps}(n)=\sum_{i,j=0}^{3}\Ug_{ij}\otimes  e_i e_j^\dagger$ be such that
\begin{equation}\label{Lij blocks with U noise}        
\Ug_{00} = \left(1+\frac{1-\eps}{2n}\right)\Kg_{0}
\qquad\text{and}\qquad
\Ug_{i0} = 
\sqrt{\frac{\eps}{n}} \Kg_i, \text{ for }i\in\{1,2,3\},
\end{equation}
for some operators $(\Kg_i)_{i=0,\ldots,3}$ with $\Kg_0=\Ig$
and the others~$\Ug_{ij}$ of any form as long as $\Ug^\eps(n)$ is unitary.
Then, for any $\rho\in\Ss(\Hh_0)$,
$$
\EE_0\left[\Ug^{\eps}(n)(\rho\otimes\beta)\Ug^{\eps}(n)^\dagger\right]
= \rho + \frac
{1}{n}\Omega^\eps(\rho)+\oon,
\quad \text{as~$n$ tends to infinity},
$$
where~$\Omega^{\eps}$ has the form~\eqref{eq:NoiseModelChannel}
with 
$\Omega(\rho) = \sum_{i=1}^{3} \Kg_i \rho \Kg_i^\dagger$.
\end{lemma}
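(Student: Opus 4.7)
The plan is a direct two-step calculation. First, I generalise the partial-trace identity~\eqref{eq : og mu k+1} from the two-dimensional environment to the four-dimensional one used here. Expanding $\Ug^\eps(n)(\rho\otimes\beta)\Ug^\eps(n)^\dagger$ in the tensor block form and using $\beta = e_0 e_0^\dagger$ forces the environment indices adjacent to $\beta$ to be $0$, giving
\begin{equation*}
\Ug^\eps(n)(\rho\otimes\beta)\Ug^\eps(n)^\dagger = \sum_{i,k=0}^{3} \Ug_{i0}\rho\Ug_{k0}^\dagger \otimes e_i e_k^\dagger.
\end{equation*}
Taking the partial trace over the environment sends $e_i e_k^\dagger$ to $\delta_{ik}$, producing the key identity
\begin{equation*}
\EE_0\bigl[\Ug^\eps(n)(\rho\otimes\beta)\Ug^\eps(n)^\dagger\bigr] = \sum_{i=0}^{3} \Ug_{i0}\rho\Ug_{i0}^\dagger.
\end{equation*}
This makes it transparent that only the first block-column of $\Ug^\eps(n)$ enters the reduced dynamics, which is precisely why the remaining blocks $\Ug_{ij}$ with $j\neq 0$ may be chosen freely, subject only to making $\Ug^\eps(n)$ unitary.

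Second, I substitute~\eqref{Lij blocks with U noise} and expand in powers of $1/n$. Using $\Kg_0 = \Ig$, the $i=0$ term contributes
\begin{equation*}
\Ug_{00}\rho\Ug_{00}^\dagger = \left(1+\frac{1-\eps}{2n}\right)^{2}\rho = \rho + \frac{1-\eps}{n}\rho + o\!\left(\frac{1}{n}\right),
\end{equation*}
where the cross term $\frac{1-\eps}{n}$ is the essential contribution and the square term $\frac{(1-\eps)^{2}}{4n^{2}}$ is absorbed into the remainder. For $i \in \{1,2,3\}$ one has exactly $\Ug_{i0}\rho\Ug_{i0}^\dagger = \frac{\eps}{n}\Kg_i\rho\Kg_i^\dagger$. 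Summing and invoking the definition~\eqref{eq:NoiseModelChannel} of $\Omega^\eps$ together with $\Omega(\rho) = \sum_{i=1}^{3}\Kg_i\rho\Kg_i^\dagger$:
\begin{equation*}
\sum_{i=0}^{3} \Ug_{i0}\rho\Ug_{i0}^\dagger = \rho + \frac{1}{n}\Bigl[(1-\eps)\rho + \eps\,\Omega(\rho)\Bigr] + o\!\left(\frac{1}{n}\right) = \rho + \frac{1}{n}\Omega^\eps(\rho) + o\!\left(\frac{1}{n}\right),
\end{equation*}
which is the claim. There is no genuine obstacle: the proof is mechanical once the partial-trace identity is set up, and the only care required is the bookkeeping of the quadratic term in the expansion of $\Ug_{00}\rho\Ug_{00}^\dagger$.
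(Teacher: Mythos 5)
Your proposal is correct and follows the same route as the paper: reduce to $\EE_0[\Ug^\eps(n)(\rho\otimes\beta)\Ug^\eps(n)^\dagger]=\sum_{i=0}^{3}\Ug_{i0}\rho\Ug_{i0}^\dagger$ via the block expansion with $\beta=e_0e_0^\dagger$, then substitute~\eqref{Lij blocks with U noise} and expand in $1/n$. The only difference is that you spell out the square of $1+\frac{1-\eps}{2n}$ explicitly, which the paper leaves implicit.
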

\begin{remark}
We have not assumed that $(\Kg_i)_{i=0,\ldots,3}$ are Kraus operators, 
though of course the Kraus decomposition clearly offers such a choice.
\end{remark}

\begin{proof}
For any $\rho\in\Ss(\Hh_0)$, $\Ug^{\eps}(n)(\rho\otimes \beta)\Ug^{\eps}(n)^\dagger=\sum_{i,j=0}^{3}\Ug_{i0}\rho \Ug_{j0}^\dagger\otimes
e_i e_j^\dagger$
and therefore 
$$
\EE_0[\Ug^{\eps}(n)(\rho\otimes \beta)\Ug^{\eps}(n)^\dagger]
= \sum_{i=0}^{3} \Ug_{i0}\rho \Ug_{i0}^\dagger
= \rho+\frac{1}{n}\left[(1-\eps)\rho + \eps\sum_{i=1}^{3} \Kg_i \rho \Kg_i^\dagger\right]  + \oon.
$$
\end{proof}

For $i\in\{0,\dots, 3\}$ and $\rho\in\Ss(\Hh_{0})$, define
$q_{k+1}^{(i)} := \Tr\left[\rhoieps_{k}(n)\right]$
and 
$p_{k+1}^{(i)} := 1 - q_{k+1}^{(i)}$,
where
$$
\rhoieps(n)
 := \EE_0\Big[(\Ig\otimes \Pg_i)\Ug^{\eps}(n)
\left(\rho\otimes\beta\right)
 \Ug^{\eps}(n)^\dagger(\Ig\otimes \Pg_i)\Big],
$$
as well as the random variable
$\nu_{k+1}^i(\omb)=\ind_{\{\omega_{k+1}=i\}}$ for any $\omb\in\{0,\dots,3\}^{\NN^*}$. Conditionally on~$\rho_{k}^{\eps}(n)$, we have 
$$
\nu_{k+1}^i =
\begin{cases}
0, & \text{with probability } p_{k+1}^{(i)}, \\
1, & \text{with probability } q_{k+1}^{(i)},
\end{cases}
$$
so that
$$
\rho_{k+1}^{\eps}(n)
= \left(1-\sum_{i=1}^{3}\nu_{k+1}^i\right)\frac{\rhokzeps(n)}{q_{k+1}^{(0)}}
 + \sum_{i=1}^{3}\nu_{k+1}^{i} \frac{\rhoieps_{k}(n)}{q_{k+1}^{(i)}}.
$$
Remark~\ref{rem:Proba01} applies again.
With $X_{k+1}^{(i)}$ the normalised version of $\nu_{k+1}^i$, 
we obtain:
\begin{proposition}\label{prop : inc rho eps k+1 U noise op2}
For any state $\rho_0\in\Ss(\Hh_{0})$ and any observable~$\Ag$ on $\CC^4$, 
\begin{equation}\label{discrete trajectory with U noise op2}
\rho_{k+1}^{\eps}(n)=\sum_{i=0}^{3}\rhoieps_{k}(n) + \sum_{i=1}^{3}
\left(\sqrt{\frac{p_{k+1}^{(i)}}{q_{k+1}^{(i)}}}\rhoieps_{k}(n) - \frac{\sqrt{p_{k+1}^{(i)}q_{k+1}^{(i)}}}{q_{k+1}^{(0)}}\rhokzeps(n)\right) X_{k+1}^{(i)}.
\end{equation}
\end{proposition}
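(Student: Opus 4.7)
The proof is a purely algebraic rearrangement of the equation for $\rho_{k+1}^{\eps}(n)$ displayed just above the statement, with no new probabilistic or analytic input required. Two identities drive the computation. First, conditional on $\rho_{k}^{\eps}(n)$, each $\nu_{k+1}^i$ is Bernoulli with parameter $q_{k+1}^{(i)}$, so that
$$
\nu_{k+1}^i \;=\; q_{k+1}^{(i)} + \sqrt{p_{k+1}^{(i)} q_{k+1}^{(i)}}\, X_{k+1}^{(i)}.
$$
Second, the total-probability identity $\sum_{i=0}^{3} q_{k+1}^{(i)} = 1$, which follows from $\sum_{i=0}^{3}\Pg_i = \Ig$ together with cyclicity of the trace and unitarity of $\Ug^{\eps}(n)$.

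First I would use the mutual exclusivity $1 - \sum_{i=1}^{3}\nu_{k+1}^i = \nu_{k+1}^0$ to bring the preceding expression into the convenient form
$$
\rho_{k+1}^{\eps}(n) = \frac{\rhokzeps(n)}{q_{k+1}^{(0)}} + \sum_{i=1}^{3}\nu_{k+1}^i \left( \frac{\rhoieps(n)}{q_{k+1}^{(i)}} - \frac{\rhokzeps(n)}{q_{k+1}^{(0)}}\right),
$$
in which all three measurement outcomes $i=1,2,3$ are treated symmetrically against the reference outcome $i=0$.

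Next, I would substitute the Bernoulli decomposition above into the sum, which naturally splits into a deterministic piece and a stochastic piece. For the deterministic piece, the coefficient of $\rhokzeps(n)/q_{k+1}^{(0)}$ equals $1 - \sum_{i=1}^{3} q_{k+1}^{(i)}$, which by the total-probability identity is precisely $q_{k+1}^{(0)}$; this contribution therefore collapses to $\rhokzeps(n)$, and together with the remaining terms $\sum_{i=1}^{3}\rhoieps(n)$ it reconstitutes the full sum $\sum_{i=0}^{3}\rhoieps(n)$. For the stochastic piece, distributing the factor $\sqrt{p_{k+1}^{(i)} q_{k+1}^{(i)}}$ across each bracket yields exactly the coefficient
$\sqrt{p_{k+1}^{(i)}/q_{k+1}^{(i)}}\,\rhoieps(n) - \sqrt{p_{k+1}^{(i)} q_{k+1}^{(i)}}\,\rhokzeps(n)/q_{k+1}^{(0)}$ of $X_{k+1}^{(i)}$, matching the claim verbatim.

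The manipulation is entirely mechanical; there is no genuine obstacle. The only points meriting care are the invocation of the total-probability identity (which is used silently when collapsing the deterministic part) and the exclusion of degenerate situations $q_{k+1}^{(i)} \in \{0,1\}$, where $X_{k+1}^{(i)}$ would be ill-defined — the multi-outcome analogue of Remark~\ref{rem:Proba01}, which one agrees to discard as in the binary case.
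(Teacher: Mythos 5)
Your proof is correct and is precisely the computation the paper leaves implicit: the proposition is stated as an immediate consequence of the display $\rho_{k+1}^{\eps}(n)=\left(1-\sum_{i=1}^{3}\nu_{k+1}^i\right)\frac{\rhokzeps(n)}{q_{k+1}^{(0)}}+\sum_{i=1}^{3}\nu_{k+1}^{i}\frac{\rhoieps_{k}(n)}{q_{k+1}^{(i)}}$ after substituting the normalisation $\nu_{k+1}^i=q_{k+1}^{(i)}+\sqrt{p_{k+1}^{(i)}q_{k+1}^{(i)}}\,X_{k+1}^{(i)}$, and your use of $\sum_{i=0}^{3}q_{k+1}^{(i)}=1$ together with the exclusion of the degenerate cases (Remark~\ref{rem:Proba01}) spells out exactly the two points the paper takes for granted. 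No gaps.
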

We write the projectors $\Pg_i = (\pf_{kl}^{(i)})$. 
Using Lemma~\ref{lemma : U eps as wanted}, the increment of the sequence $(\rho_{k}^{\eps}(n))_k$ is rewritten as follows, under the assumption (analogous to Assumption~\ref{assu:NonDiag})
\begin{equation}\label{assu:r00}
\pf_{00}^{(i)}
\left(1-\pf_{00}^{(i)}\right)
\neq 0,
\qquad\text{for all }
i\in\{0,\dots,3\}.
\end{equation}

\begin{proposition}\label{increment of rho_eps with U nosie op2}
Assuming~\eqref{assu:r00}, then
\begin{equation}
\rho_{k+1}^{\eps}(n)
= \rho_{k}^{\eps}(n)+\frac{1}{n}\Omega^\eps(\rho_{k}^{\eps}(n)) + \frac{\sqrt{\eps}}{\sqrt{n}}\sum_{i=1}^{3}
\Big(\Xi_i(\rho_{k}^{\eps}(n)) + o(1)\Big) X_{k+1}^{(i)} + \oon,
\end{equation}
where $\Xi_i=\sqrt{\pf_{00}^{(i)}(1-\pf_{00}^{(i)})}\sum_{a=1}^{3}\Theta_{\Kg_a^{(i)}}$ with $\Kg_a^{(i)}=\gamma_a^{(i)}\Kg_a$ and $\gamma_a^{(i)}\in\CC$ (depending on~$i$, on~$\Ag$ and on the basis $\{e_j\}$) for all $a\in\{1,\dots,3\}$ and $i\in\{1,\dots,3\}$.
\end{proposition}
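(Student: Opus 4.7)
The plan is to follow the template of the proof of Proposition~\ref{prop:increment_of_rho_k}, starting from the exact one-step update of Proposition~\ref{prop : inc rho eps k+1 U noise op2} and expanding each of its ingredients to order $1/n$ using the explicit block form~\eqref{Lij blocks with U noise} of $\Ug^{\eps}(n)$. The deterministic part $\sum_{i=0}^3 \rhoieps_{k}(n)$ is handled directly by Lemma~\ref{lemma : U eps as wanted}, which gives $\rho_k^\eps(n) + \tfrac{1}{n}\Omega^\eps(\rho_k^\eps(n)) + o(1/n)$ and matches the first two terms of the claim. Only the three stochastic terms indexed by $i\in\{1,2,3\}$ require new work.

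Writing $\Pg_i=(\pf_{kl}^{(i)})$ in the basis $\{e_0,\ldots,e_3\}$ and using $\Ug_{00}=(1+\tfrac{1-\eps}{2n})\Ig$ together with $\Ug_{j0}=\sqrt{\eps/n}\,\Kg_j$ for $j\geq 1$, a direct calculation from $\rhoieps_k=\sum_{j,l=0}^3 \pf_{lj}^{(i)}\,\Ug_{j0}\rho_k^\eps \Ug_{l0}^\dagger$ produces
\begin{equation*}
\rhoieps_k = \pf_{00}^{(i)}\rho_k^\eps + \sqrt{\tfrac{\eps}{n}}\sum_{a=1}^3 \bigl(\Kg_a^{(i)}\rho_k^\eps + \rho_k^\eps (\Kg_a^{(i)})^\dagger\bigr)+O\!\left(\tfrac{1}{n}\right),
\end{equation*}
with provisional scalars $\gamma_a^{(i)}:=\pf_{0a}^{(i)}$ and $\Kg_a^{(i)}:=\gamma_a^{(i)}\Kg_a$; hermiticity of $\Pg_i$ forces $\pf_{a0}^{(i)}=\overline{\gamma_a^{(i)}}$, which is exactly the conjugation needed so that $\Theta_{\Kg_a^{(i)}}$ can emerge further on. Taking traces, $q_{k+1}^{(i)}=\pf_{00}^{(i)}+\sqrt{\eps/n}\sum_a\Tr[\rho_k^\eps(\Kg_a^{(i)}+(\Kg_a^{(i)})^\dagger)]+O(1/n)$, which under Assumption~\eqref{assu:r00} stays bounded away from $0$, so $1/q_{k+1}^{(i)}$ may be inverted as a geometric series. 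Combining,
\begin{equation*}
\frac{\rhoieps_k}{q_{k+1}^{(i)}} = \rho_k^\eps + \frac{\sqrt{\eps/n}}{\pf_{00}^{(i)}}\sum_{a=1}^3 \Theta_{\Kg_a^{(i)}}(\rho_k^\eps) + O\!\left(\tfrac{1}{n}\right),
\end{equation*}
the trace subtraction inside each $\Theta_{\Kg_a^{(i)}}$ being produced by the correction from $1/q_{k+1}^{(i)}$.

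To reassemble, one rewrites the coefficient of $X_{k+1}^{(i)}$ in~\eqref{discrete trajectory with U noise op2} as $\sqrt{p_{k+1}^{(i)}q_{k+1}^{(i)}}\bigl(\rhoieps_k/q_{k+1}^{(i)}-\rhokzeps/q_{k+1}^{(0)}\bigr)$. Subtracting the two previous expansions yields $\sqrt{\eps/n}\sum_{a=1}^3\bigl(\Theta_{\Kg_a^{(i)}}(\rho_k^\eps)/\pf_{00}^{(i)}-\Theta_{\Kg_a^{(0)}}(\rho_k^\eps)/\pf_{00}^{(0)}\bigr)+O(1/n)$. Since $\pf_{00}^{(i)}\in\RR$ and the map $C\mapsto\Theta_C(\rho)$ is additive and $\RR$-homogeneous in $C$, this collapses into $\sum_a\Theta_{\widetilde{\Kg}_a^{(i)}}(\rho_k^\eps)$ with rescaled scalars $\widetilde\gamma_a^{(i)}:=\gamma_a^{(i)}/\pf_{00}^{(i)}-\gamma_a^{(0)}/\pf_{00}^{(0)}\in\CC$. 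Relabelling $\widetilde\gamma_a^{(i)}\mapsto\gamma_a^{(i)}$ as in the statement and using $\sqrt{p_{k+1}^{(i)}q_{k+1}^{(i)}}=\sqrt{\pf_{00}^{(i)}(1-\pf_{00}^{(i)})}+O(1/\sqrt{n})$, the coefficient becomes $\sqrt{\eps/n}(\Xi_i(\rho_k^\eps)+o(1))$ in front of $X_{k+1}^{(i)}$; the leftover deterministic $O(1/n)$ residues combine into the global $o(1/n)$ uniformly in $k$ by compactness of $\Ss(\Hh_0)$.

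The main technical obstacle is this last reassembly: the $i=0$ block shows up \emph{both} through the normaliser $q_{k+1}^{(0)}$ and through $\rhokzeps$, and it is only by exploiting the $\RR$-homogeneity (not $\CC$-linearity) of $C\mapsto\Theta_C$ that the resulting difference of two Lindblad-like dissipators collapses into a single one with rescaled $\gamma$ factors; otherwise a parasitic $\Theta_{\Kg_a^{(0)}}$ term would survive. Everything else is book-keeping, provided one keeps the explicit $\sqrt{\eps/n}$ prefactor outside the stochastic remainder so that it is genuinely $o(1)$ rather than merely $O(1)$.
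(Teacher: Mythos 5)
Your proof is correct and follows essentially the same route as the paper's: expand each $\rhoieps_{k}$ via the block form of $\Ug^{\eps}(n)$, sum over $i$ for the drift, expand the probabilities, and identify $\gamma_a^{(i)}=\pf_{0a}^{(i)}/\pf_{00}^{(i)}-\pf_{0a}^{(0)}/\pf_{00}^{(0)}$ so that the two dissipator-like terms collapse via additivity and real homogeneity of $C\mapsto\Theta_C$. Your factoring of $\sqrt{p_{k+1}^{(i)}q_{k+1}^{(i)}}$ out of $E_i$ and working with the normalised states $\rhoieps_{k}/q_{k+1}^{(i)}$ is only a cosmetic reorganisation of the paper's direct expansion of the two square-root ratios, and your explicit appeal to hermiticity of $\Pg_i$ (so that $\pf_{a0}^{(i)}=\overline{\pf_{0a}^{(i)}}$) makes precise a step the paper leaves implicit.
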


\begin{proof}
Write $\rho_{k}^{\eps}$ instead of $\rho_{k}^{\eps}(n)$.
For $i\in\{0,\dots,3\}$, we have
\begin{align}
\rhoieps_{k} & = \sum_{a,b=1}^{3}\pf_{ba}^{(i)}\Ug_{a0}\rho_{k}^{\eps} \Ug_{b0}^\dagger \notag\\
&=\pf_{00}^{(i)}\left(1+\frac{1-\eps}{n}\right)
\rho_{k}^{\eps}+\frac{\sqrt{\eps}}{\sqrt{n}}\sum_{a=1}^{3}
\left(\pf_{0a}^{(i)}\Kg_a \rho_{k}^{\eps}+\pf_{a0}^{(i)}\rho_{k}^{\eps}\Kg_a^\dagger\right)+\frac{\eps}{n}\sum_{a,b=1}^{3}\pf_{ba}^{(i)}\Kg_a\rho_{k}^{\eps}\Kg_b^\dagger +\oon \notag\\
&=\pf_{00}^{(i)}\rho_{k}^{\eps}+\frac{1}{\sqrt{n}}\Gamma_i
(\rho_{k}^{\eps})
 + \frac{1}{n}\left(\eps\sum_{a,b=1}^{3}\pf_{ba}^{(i)}\Kg_a\rho_{k}^{\eps}\Kg_b^\dagger +(1-\eps)
 \rho_{k}^{\eps}\right)
 +\oon \label{Ugb_i(rho_epsilon)_approximation_with_U_noise}
\end{align}
where $\Gamma_i(\rho)=\sqrt{\eps}\sum_{a=1}^{3}\left(\pf_{0a}^{(i)}\Kg_a \rho_{k}^{\eps}+\pf_{a0}^{(i)}\rho_{k}^{\eps}\Kg_a^\dagger\right)$.
Hence
$$
\sum_{i=0}^{3}\rhoieps_{k}
= \rho_{k}^{\eps}+\frac{1}{n}\left(\eps\sum_{a=1}^{3}\Kg_a\rho_{k}^{\eps} \Kg_a^\dagger
+ (1-\eps)\rho_{k}^{\eps}\right)
 + \oon
  = \rho_{k}^{\eps}+\frac{1}{n}\Omega^\eps(\rho_{k}^{\eps})+\oon.
$$

Denote $q^{(i)}:=\Tr[\Gamma_i(\rho_{k}^{\eps})]$. 
Then
\begin{equation}\label{approx of qk+1 and pk+1 with U noise}
q_{k+1}^{(i)} := \pf_{00}^{(i)}+\frac{q^{(i)}}{\sqrt{n}}+\oonsq
\qquad\text{and}\qquad
p_{k+1}^{(i)} := 1-\pf_{00}^{(i)}-\frac{q^{(i)}}{\sqrt{n}}.
\end{equation}
Let us focus on the terms $E_i:=\sqrt{\frac{p_{k+1}^{(i)}}{q_{k+1}^{(i)}}}\rhoieps_{k} - \frac{\sqrt{p_{k+1}^{(i)}q_{k+1}^{(i)}}}{q_{k+1}^{(0)}}\rhokzeps$ for $i\in\{1,\dots,3\}$. 
Denoting $\alpha^{(i)}:=\sqrt{\frac{1-\pf_{00}^{(i)}}{\pf_{00}^{(i)}}}$,
we can compute
\begin{align*}
\sqrt{\frac{p_{k+1}^{(i)}}{q_{k+1}^{(i)}}}
 & = \alpha^{(i)}\left(1+\frac{q^{(i)}}{2\pf_{00}^{(i)}(1-\pf_{00}^{(i)})\sqrt{n}}+\oonsq\right),\\
\frac{\sqrt{p_{k+1}^{(i)}q_{k+1}^{(i)}}}{q_{k+1}^{(0)}}
 & = \frac{\sqrt{\pf_{00}^{(i)}(1-\pf_{00}^{(i)})}}{\pf_{00}^{(0)}}
 \left[1+\frac{1}{\sqrt{n}}\left(\frac{q^{(i)}(1-\pf_{00}^{(i)})}{2\pf_{00}^{(i)}(1-\pf_{00}^{(i)})}-\frac{q^{(0)}}{\pf_{00}^{(0)}}\right)
 + \oonsq\right],
\end{align*}
so that
\begin{align*}
    E_i&=\alpha^{(i)}\left[1+\frac{q^{(i)}}{2\pf_{00}^{(i)}(1-\pf_{00}^{(i)})\sqrt{n}}+\oonsq\right]\left\{\pf_{00}^{(i)}\rho_{k}^{\eps}+\frac{1}{\sqrt{n}}\Gamma_i(\rho_{k}^{\eps})+\oonsq\right\}\\
    &-\frac{\sqrt{\pf_{00}^{(i)}(1-\pf_{00}^{(i)})}}{\pf_{00}^{(0)}}
    \left[1+\frac{1}{\sqrt{n}}\left(\frac{q^{(i)}(1-\pf_{00}^{(i)})}{2\pf_{00}^{(i)}(1-\pf_{00}^{(i)})}-\frac{q^{(0)}}{\pf_{00}^{(0)}}\right) + \oonsq\right]\left\{\pf_{00}^{(0)}\rho_{k}^{\eps}+\frac{1}{\sqrt{n}}\Gamma_0(\rho_{k}^{\eps})+\oonsq\right\}\\
    & = \frac{\beta_i}{\sqrt{n}}\left[\left(\frac{q^{(0)}}{\pf_{00}^{(0)}}-\frac{q^{(i)}}{\pf_{00}^{(i)}}\right)\rho_{k}^{\eps}+\frac{1}{\pf_{00}^{(i)}}\Gamma_i(\rho_{k}^{\eps})-\frac{1}{\pf_{00}^{(0)}}\Gamma_0(\rho_{k}^{\eps})+o(1)\right],
\end{align*}
where $\beta_i:=\sqrt{\pf_{00}^{(i)}(1-\pf_{00}^{(i)})}$.
For $a\in\{1,\dots,3\},i\in\{1,\dots,3\}$, we define $\gamma_a^{(i)}:=\frac{\pf_{0a}^{(i)}}{\pf_{00}^{(i)}}-\frac{\pf_{0a}^{(0)}}{\pf_{00}^{(0)}}$ and $\Kg_a^{(i)}:=\gamma_a^{(i)}\Kg_a$, then the last expression finally reads
$$
E_i=\frac{\sqrt{\eps}}{\sqrt{n}}\beta_i\left[\sum_{a=1}^{3}\left(\Kg_a^{(i)}\rho_{k}^{\eps}+\rho_{k}^{\eps} (\Kg_a^{(i)})^\dagger - 
\Tr\left[\rho_{k}^{\eps}\left(\Kg_a^{(i)}+(\Kg_a^{(i)})^\dagger\right)\right]\rho_{k}^{\eps}\right)+o(1)\right].
$$
\end{proof}

\begin{remark}
Since $\lVert \rho_{k}^{\eps}\rVert \leq 1$ for all $k\in \NN$, the term $\oonsq$ in~\eqref{Ugb_i(rho_epsilon)_approximation_with_U_noise} is bounded independently of $k$ and $\omb\in\Omega^{\NN^*}$. This independence follows also in the approximations~\eqref{approx of qk+1 and pk+1 with U noise}.
\end{remark}

\begin{lemma}\label{lem:def_bij_with_U_noise_op2}
Assuming~\eqref{assu:r00}, then, for all $i, j\in\{1,\dots,3\}$,
$k\in \NN$,
$$
K_i:=\sup_{k\in\NN} \EE\left[\left(X_{k+1}^{(i)}\right)^4\right]<\infty
\qquad\text{and}\qquad
\sqrt{\frac{q_{k+1}^{(i)}q_{k+1}^{(j)}}{p_{k+1}^{(i)}p_{k+1}^{(j)}}}=b_{ij}
\left[1 + \Oo\left(\frac{1}{\sqrt{n}}\right)\right],
$$
where $b_{ij}:=\sqrt{\frac{\pf_{00}^{(i)}\pf_{00}^{(j)}}{(1-\pf_{00}^{(i)})(1-\pf_{00}^{(j)})}}$ and $\Oo\left(\frac{1}{\sqrt{n}}\right)$ is bounded independently of $k$ and $\omb\in\Omega^{\NN^*}$.
\end{lemma}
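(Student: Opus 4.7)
The plan is to obtain both claims by plugging the expansions in \eqref{approx of qk+1 and pk+1 with U noise} directly into the definition of $X_{k+1}^{(i)}$ and into the ratio of interest, and then exploiting assumption~\eqref{assu:r00} to control the Taylor remainders uniformly.

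First I would handle the fourth-moment bound. Conditionally on $\Ff_k$, $\nu_{k+1}^i$ is a Bernoulli variable with $\PP(\nu_{k+1}^i = 1 \mid \Ff_k) = q_{k+1}^{(i)}$, so the normalised $X_{k+1}^{(i)} = (\nu_{k+1}^i - q_{k+1}^{(i)})/\sqrt{p_{k+1}^{(i)} q_{k+1}^{(i)}}$ takes only the two values $-\sqrt{q_{k+1}^{(i)}/p_{k+1}^{(i)}}$ and $\sqrt{p_{k+1}^{(i)}/q_{k+1}^{(i)}}$, with respective probabilities $p_{k+1}^{(i)}$ and $q_{k+1}^{(i)}$. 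A direct computation then gives
\[
\EE\!\left[\bigl(X_{k+1}^{(i)}\bigr)^4 \,\Big|\, \Ff_k\right] = \frac{\bigl(p_{k+1}^{(i)}\bigr)^2}{q_{k+1}^{(i)}} + \frac{\bigl(q_{k+1}^{(i)}\bigr)^2}{p_{k+1}^{(i)}}.
\]

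Now by \eqref{approx of qk+1 and pk+1 with U noise} and the remark preceding the lemma, $q_{k+1}^{(i)} = \pf_{00}^{(i)} + O(1/\sqrt n)$ and $p_{k+1}^{(i)} = 1 - \pf_{00}^{(i)} + O(1/\sqrt n)$ with remainders bounded uniformly in $k$ and $\omb$. Assumption \eqref{assu:r00} ensures that both $\pf_{00}^{(i)}$ and $1-\pf_{00}^{(i)}$ are strictly positive, hence for $n$ large enough (uniformly in $k,\omb$) the denominators above are bounded below by a strictly positive constant. The conditional fourth moment is therefore uniformly bounded, and taking expectation and then supremum over $k$ yields $K_i<\infty$.

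For the ratio identity I would multiply the expansions directly: the same uniform approximations give
\[
\frac{q_{k+1}^{(i)}q_{k+1}^{(j)}}{p_{k+1}^{(i)}p_{k+1}^{(j)}} = \frac{\pf_{00}^{(i)}\pf_{00}^{(j)} + O(1/\sqrt n)}{(1-\pf_{00}^{(i)})(1-\pf_{00}^{(j)}) + O(1/\sqrt n)} = \frac{\pf_{00}^{(i)}\pf_{00}^{(j)}}{(1-\pf_{00}^{(i)})(1-\pf_{00}^{(j)})}\left[1 + O\!\left(\frac{1}{\sqrt n}\right)\right],
\]
where, in the second equality, the Taylor expansion of $x\mapsto 1/x$ around the strictly positive limit $(1-\pf_{00}^{(i)})(1-\pf_{00}^{(j)})$ is justified by \eqref{assu:r00}. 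Taking square roots (again legitimate since $b_{ij}>0$) gives $b_{ij}[1+O(1/\sqrt n)]$, as claimed.

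There is no real obstacle here; the only point demanding care, rather than ingenuity, is the uniformity in $(k,\omb)$ of the $O(1/\sqrt n)$ remainders throughout the argument. This is already established by the remark preceding the lemma (which is in turn a consequence of $\|\rho_{k}^{\eps}\|\leq 1$), and it propagates through the elementary Taylor expansions above without modification.
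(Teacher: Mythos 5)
Your proposal is correct and follows essentially the same route as the paper: compute the conditional fourth moment of the two-valued variable $X_{k+1}^{(i)}$ to get $(p_{k+1}^{(i)})^2/q_{k+1}^{(i)}+(q_{k+1}^{(i)})^2/p_{k+1}^{(i)}$, then substitute the expansions~\eqref{approx of qk+1 and pk+1 with U noise} into both this expression and the ratio, using~\eqref{assu:r00} to bound the denominators away from zero and the remark on $\|\rho_k^{\eps}\|\leq 1$ for uniformity in $(k,\omb)$. The only cosmetic difference is that the paper writes out the explicit first-order coefficient of the $1/\sqrt{n}$ term in the ratio expansion, whereas you absorb it into the $\Oo(1/\sqrt{n})$; this changes nothing of substance.
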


\begin{proof}
Let $i\in\{1,\dots,3\}$, $k\in\NN$ and $\Ff_k=\sigma(X_l^{(j)}|j\in\{1,\dots,3\},l\leq k)$.
With~\eqref{approx of qk+1 and pk+1 with U noise} we have
$$
\EE\left[\left(X_{k+1}^{(i)}\right)^4|\Ff_k\right]=\frac{(p_{k+1}^{(i)})^2}{q_{k+1}^{(i)}}+\frac{(q_{k+1}^{(i)})^2}{p_{k+1}^{(i)}}=\frac{(1-\pf_{00}^{(i)})^2}{\pf_{00}^{(i)}}+\frac{(\pf_{00}^{(i)})^2}{1-\pf_{00}^{(i)}}
+ \Oo\left(\frac{1}{\sqrt{n}}\right),
$$
where the $\Oo\left(\frac{1}{\sqrt{n}}\right)$ term is bounded independently of~$k$.
Again, by using~\eqref{approx of qk+1 and pk+1 with U noise}
$$
\sqrt{\frac{q_{k+1}^{(i)}q_{k+1}^{(j)}}{p_{k+1}^{(i)}p_{k+1}^{(j)}}}
 = b_{ij}\left\{1+\frac{1}{\sqrt{n}}\left(\frac{q^{(i)}}{\pf_{00}^{(i)}(1-\pf_{00}^{(i)})}+\frac{q^{(j)}}{\pf_{00}^{(j)}(1-\pf_{00}^{(j)})}+\oonsq\right)
\right\}.
$$
Note that $q^{(i)}$ is bounded above independently of $k$ and $\omb\in\Omega^{\NN^*}$ (because $\lVert \rho_{k}^{\eps}(\omb)\rVert \leq 1$).
\end{proof}

We define the processes 
$\displaystyle V_n(t) := \frac{\lfloor nt\rfloor}{n}$
and $\displaystyle W_n^{i}(t) := \frac{1}{ \sqrt{n}}\sum_{k=0}^{\lfloor nt \rfloor-1}X_{k+1}^{(i)}$
as well as 
$$
\eps_n(t) := 
\rho_n^\eps(t) - \rho_0 - \int^t_0\Omega^\eps(\rho_n^\eps(s^-))\D V_n(s) - \sqrt{\eps}\int_0^t \Xi(\rho_n^\eps(s^-))\D W_n(s),
$$
with $\rho_n^\eps(t) := \rho^\eps_{\lfloor nt \rfloor}(n)$ 
and $\Xi(\rho) := (\Xi_i(\rho))_{i\in\{1,\dots,3\}}$.
Consider the filtration $(\Ff^n_t) = \sigma\left(X_k^{(i)}:i\in\{1,\dots,3\}, k\leq \lfloor nt \rfloor\right)$.

\begin{proposition}\label{cv of W_n with U noise}
For each $n$, $W_n$ is a $(\Ff^n)$-martingale
and $(W_n)_{n}$ converges in a distribution to a $(0,B)$-martingale 
where, with $b_{ij}$ defined in Lemma~\ref{lem:def_bij_with_U_noise_op2} and 
\begin{equation}\label{def of B}
B_{ij}=\begin{cases}
1, & \text{if } i=j,\\
-b_{ij},& \text{if } i\neq j.
\end{cases}
\end{equation}
\end{proposition}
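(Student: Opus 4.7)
The plan is to apply Whitt's martingale central limit theorem (Theorem~\ref{th:whitt}), following the template of Proposition~\ref{cv of hat W_n}. The structure of the argument has three pieces: checking the martingale property, identifying the joint quadratic-variation limit, and controlling the maximal jump size.

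First, I would verify that each coordinate $W_n^i$ is an $(\Ff^n)$-martingale. Since $\EE[X_{k+1}^{(i)}\mid \Ff_k]=0$ for each $i\in\{1,2,3\}$ by construction of the normalised indicators, a direct computation as in Proposition~\ref{cv of hat W_n} gives $\EE[W_n^i(t)\mid \Ff^n_s]=W_n^i(s)$ for $s\le t$, so $W_n$ is a square-integrable $(\Ff^n)$-martingale starting from zero.

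Next, I would analyse $[W_n^i, W_n^j]_t$ for each pair $(i,j)$. The diagonal case $i=j$ is essentially identical to the one treated in Proposition~\ref{cv of hat W_n}: the sum $\frac{1}{n}\sum_{k=0}^{\lfloor nt\rfloor-1}(X_{k+1}^{(i)})^2$ has conditional mean $\frac{\lfloor nt\rfloor}{n}\to t$, and the uniform fourth-moment bound from Lemma~\ref{lem:def_bij_with_U_noise_op2} yields an $L^2$-estimate of order $1/n$, so that $[W_n^i,W_n^i]_t$ converges in distribution to $t=tB_{ii}$. The genuinely new ingredient is the off-diagonal case $i\ne j$. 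Because $\omega_{k+1}$ takes a single value in $\{0,\dots,3\}$, the indicators $\nu_{k+1}^i$ and $\nu_{k+1}^j$ are supported on disjoint events, so $\nu_{k+1}^i\nu_{k+1}^j=0$ almost surely. Expanding the definition of $X_{k+1}^{(i)}$ and taking the conditional expectation then produces
$$
\EE\!\left[X_{k+1}^{(i)}X_{k+1}^{(j)}\,\Big|\,\Ff_k\right]
= -\sqrt{\frac{q_{k+1}^{(i)}q_{k+1}^{(j)}}{p_{k+1}^{(i)}p_{k+1}^{(j)}}}
= -b_{ij}+\Oo\!\left(\frac{1}{\sqrt{n}}\right),
$$
uniformly in $k$ by Lemma~\ref{lem:def_bij_with_U_noise_op2}. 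Summing over $k$ and controlling the variance via Cauchy--Schwarz (using the uniform fourth-moment bound to dominate $\EE[(X_{k+1}^{(i)}X_{k+1}^{(j)})^2]$) shows that $[W_n^i,W_n^j]_t$ converges in $L^2$, hence in distribution, to $-tb_{ij}=tB_{ij}$, matching the announced covariance structure.

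Finally, for the maximal-jump condition, the only jumps of $W_n$ occur at the grid points $k/n$ and have size bounded by $\frac{1}{\sqrt{n}}\sum_{i=1}^{3}|X_{k+1}^{(i)}|$. Convexity together with Lemma~\ref{lem:def_bij_with_U_noise_op2} gives
$$
\EE\!\left[\left(\sup_{0\le k\le n-1}\|\delta W_{n,k}\|_1\right)^{4}\right]
\le \frac{C}{n}\sup_{k,i}\EE\!\left[(X_{k+1}^{(i)})^{4}\right]\xrightarrow[n\uparrow\infty]{}0,
$$
exactly as in the proof of Proposition~\ref{cv of hat W_n}, whence the $L^1$-form of the jump condition in Theorem~\ref{th:whitt} follows. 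Whitt's theorem then delivers convergence of $W_n$ to a $(0,B)$-martingale, which ends the proof. The main obstacle, compared with the two-operator setting of Section~\ref{sec:section2a}, is precisely this off-diagonal structure of $B$: the three normalised indicators here arise from a single measurement event and are intrinsically negatively correlated through the identity $\nu_{k+1}^i\nu_{k+1}^j=0$; extracting the coefficient $-b_{ij}$ cleanly is the crucial computational step, after which the rest is a direct adaptation of the machinery already set up.
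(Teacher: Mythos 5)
Your proposal is correct and follows essentially the same route as the paper: verification of the martingale property, the maximal-jump bound via convexity and the uniform fourth-moment estimate of Lemma~\ref{lem:def_bij_with_U_noise_op2}, the diagonal quadratic-variation limit as in Proposition~\ref{cv of hat W_n}, and the off-diagonal computation $\EE[X_{k+1}^{(i)}X_{k+1}^{(j)}\mid\Ff_k]=-\sqrt{q_{k+1}^{(i)}q_{k+1}^{(j)}/(p_{k+1}^{(i)}p_{k+1}^{(j)})}=-b_{ij}+\Oo(1/\sqrt{n})$ coming from $\nu_{k+1}^{i}\nu_{k+1}^{j}=0$, followed by an $L^2$ variance control and an application of Theorem~\ref{th:whitt}. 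This matches the paper's proof in both structure and the key computation.
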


\begin{proof}
That $(W_n)$ is a $(\Ff^n)$-martingale holds since, 
for all $k\in \NN$ and $i\in\{1,\dots,3\}$,
\begin{equation}\label{X_k centered with U noise}
\EE\left[X_{k+1}^{(i)}|\Ff_k\right]=0
\qquad\text{and}\qquad
\EE\left[\left(X_{k+1}^{(i)}\right)^2|\Ff_k\right]=1,
\end{equation}
where $\Ff_k=\sigma(X_l^{(i)}|j\in\{1,\dots,3\},l\leq k)$.
The proof is similar to that of Proposition~\ref{cv of hat W_n}.
We wish to apply Theorem~\ref{th:whitt} to $(W_n)$.
The only jumps possible in $W_n$ are the terms $\delta W_{n,k}=\frac{1}{\sqrt{n}}(X_{k+1}^{(i)})_{i\in\{1,\dots,3\}}$. By convexity and using Lemma~\ref{lem:def_bij_with_U_noise_op2},
$$
\EE\left[\left(\sup_{0\leq k \leq n-1} \lVert \delta W_{n,k}\rVert_1\right)^4\right]
=\frac{1}{n^2}\EE\left[\sup_{0\leq k \leq n-1} \left(\sum_{i=1}^{3}\left|X_{k+1}^{(i)}\right|\right)^4\right]
\leq \frac{3^3}{n}\sum_{i=1}^{3}K_i.
$$
so that the first condition of Theorem~\ref{th:whitt} holds.
Let now  $i,j\in\{1,\ldots,3\}$, $t\in[0,1]$; then
$[W_n^i, W_n^j]_t=\frac{1}{n}\sum_{k=0}^{\lfloor nt \rfloor-1}X_{k+1}^{(i)}X_{k+1}^{(j)}$.
If $i=j$, then
\begin{align*}
& \EE\left[\left([W_n^i, W_n^i]_t-\frac{\lfloor nt \rfloor}{n}\right)^2\right]\\
&=\frac{1}{n^2}\sum_{k=0}^{\lfloor nt \rfloor-1} \EE\left[\left((X_{k+1}^{(i)})^2-1\right)^2\right] + \frac{2}{n^2}\sum_{0\leq l<k\leq \lfloor nt \rfloor-1}
\EE\left[\left((X_{k+1}^{(i)})^2-1\right)\left((X_{l+1}^{(i)})^2-1\right)\right]\\
&=\frac{1}{n^2}\sum_{k=0}^{\lfloor nt \rfloor-1} \EE\left[\left((X_{k+1}^{(i)})^2-1\right)^2\right]
\leq \frac{C}{n},
\end{align*}
for some $C>0$.
If $i\neq j$, since $\nu_{k+1}^i\nu_{k+1}^j=0$, we obtain
\begin{equation}\label{E[X k+1 with i and j]}
\EE\left[X_{k+1}^{(i)}X_{k+1}^{(j)}|\Ff_k\right]
= -\quad\sqrt{\frac{q_{k+1}^{(i)}q_{k+1}^{(j)}}{p_{k+1}^{(i)}p_{k+1}^{(j)}}}
= - b_{ij}\left[1 + \Oo\left(\frac{1}{\sqrt{n}}\right)\right],
\end{equation}
and therefore 
$$
\EE\left[\left[W_n^i,W_n^j\right]_t\right]
= -\frac{b_{ij}\lfloor nt \rfloor}{n}\left[1+\Oo\left(\frac{1}{\sqrt{n}}\right)\right].
$$
To prove that $[W_n^i, W_n^j]_t$ converges in $L^2$ to $-b_{ij}t$ for all $t\in[0,1]$, 
it suffices to show that
$\lim_{n\uparrow\infty} \VV\left[\left[W_n^i,W_n^j\right]_t\right] = 0$. 
Indeed,
\begin{align*} \VV\left[\left[W_n^i,W_n^j\right]_t\right]
& = \frac{1}{n^2}\sum_{k=0}^{\lfloor nt \rfloor-1}\EE\left[\left(X_{k+1}^{(i)}X_{k+1}^{(j)}-\EE\left[X_{k+1}^{(i)}X_{k+1}^{(j)}\right]\right)^2\right]\\
& + \frac{2}{n^2}\sum_{0\leq l<k\leq \lfloor nt \rfloor-1}\EE\left[\left(X_{k+1}^{(i)}X_{k+1}^{(j)}-\EE\left[X_{k+1}^{(i)}X_{k+1}^{(j)}\right]\right)
\left(X_{l+1}^{(i)}X_{l+1}^{(j)}-\EE\left[X_{l+1}^{(i)}X_{l+1}^{(j)}\right]\right)\right].
\end{align*}
For all $k\in \NN$, Cauchy-Lipschitz's inequality and Lemma~\ref{lem:def_bij_with_U_noise_op2}
yield
\begin{align*}
\EE\left[\EE\left[\left(X_{k+1}^{(i)}X_{k+1}^{(j)}-\EE\left[X_{k+1}^{(i)}X_{k+1}^{(j)}\right]\right)^2\right]\right]&\leq 2\EE\left[(X_{k+1}^{(i)})^4\right]
\EE\left[(X_{k+1}^{(j)})^4\right]
+ 2\EE\left[(X_{k+1}^{(i)})^2\right]
\EE\left[(X_{k+1}^{(j)})^2\right]\\
&\leq 4K_iK_j.
\end{align*}
Moreover, by~\eqref{E[X k+1 with i and j]},
$\EE\left[X_{k+1}^{(i)}X_{k+1}^{(j)}|\Ff_k\right]-\EE\left[X_{k+1}^{(i)}X_{k+1}^{(j)}\right]
= \Oo\left(\frac{1}{\sqrt{n}}\right)$,
so that, for all $l\leq k$,
\begin{align*}
& \EE\left[\left(X_{k+1}^{(i)}X_{k+1}^{(j)}-\EE\left[X_{k+1}^{(i)}X_{k+1}^{(j)}\right]\right)
\left(X_{l+1}^{(i)}X_{l+1}^{(j)}-\EE\left[X_{l+1}^{(i)}X_{l+1}^{(j)}\right]\right)\right]\\
& \leq \frac{C}{\sqrt{n}} 
\EE\left[\left(X_{l+1}^{(i)}X_{l+1}^{(j)}-\EE\left[X_{l+1}^{(i)}X_{l+1}^{(j)}\right]\right)\right]
\leq \frac{2C}{\sqrt{n}}K_iK_j.
\end{align*}
Overall,
$\VV\left[[W_n^i,W_n^j]_t\right] = \Oo\left(\frac{1}{\sqrt{n}}\right)$,
showing the second condition of Theorem~\ref{th:whitt}, and the proposition thus follows.
\end{proof}

The following shows the convergence of the expected process to an ODE and its proof,
based on~\eqref{X_k centered with U noise} and Proposition~\ref{increment of rho_eps with U nosie op2}, is omitted as analogous to that of Theorem~\ref{Convergence in E with two Cs}.

\begin{theorem}\label{cv in E with U noise op2}
With the setting above and assuming~\eqref{assu:r00},
the sequence $\left(t \mapsto \EE[\rho^\eps_{\lfloor nt\rfloor}(n)]\right)_{n\geq 1}$ 
converges in $L^\infty([0, 1])$ to the unique solution to
$\frac{\D\phi_t^\eps}{\D t}=\Omega^\eps(\phi_t^\eps)$
starting from
$\phi_0^\eps=\rho_0$.
\end{theorem}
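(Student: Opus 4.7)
The plan is to mirror the proof of Theorem~\ref{Convergence in E with two Cs}, but the absence of alternation here actually makes the argument more direct: no second-order telescoping is needed. First I would start from the one-step increment in Proposition~\ref{increment of rho_eps with U nosie op2}, take expectation on both sides conditional on~$\Ff_k$, and exploit that $\EE[X_{k+1}^{(i)}\mid\Ff_k]=0$ from~\eqref{X_k centered with U noise} together with the uniformity of the remainder pointed out in the remark following Proposition~\ref{increment of rho_eps with U nosie op2}. This yields
$$
\EE\!\left[\rho_{k+1}^{\eps}(n)\right]-\EE\!\left[\rho_{k}^{\eps}(n)\right] \;=\; \frac{1}{n}\,\Omega^{\eps}\!\left(\EE\!\left[\rho_{k}^{\eps}(n)\right]\right) + \oon,
$$
where the $\oon$ error is uniform in $k$ because the stochastic term inside the increment is uniformly bounded on~$\Ss(\Hh_0)$ (the state stays in a compact set) and the probabilities $p_{k+1}^{(i)},q_{k+1}^{(i)}$ satisfy the uniform expansions~\eqref{approx of qk+1 and pk+1 with U noise}.

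Next, I would discretise the target ODE. Since $\Omega^\eps$ is linear and continuous on~$\Ss(\Hh_0)$, the ODE $\dot\phi^\eps_t=\Omega^\eps(\phi^\eps_t)$ with initial condition $\rho_0\in\Ss(\Hh_0)$ has a unique global $\Cc^1$ solution (which remains in~$\Ss(\Hh_0)$ because $\Omega^\eps$ is a quantum channel, so its flow preserves trace and positivity). A first-order Taylor expansion at the grid points gives
$$
\phi^\eps_{(k+1)/n} \;=\; \phi^\eps_{k/n} + \frac{1}{n}\,\Omega^\eps\!\left(\phi^\eps_{k/n}\right) + \oon,
$$
uniformly in $k\in\{0,\ldots,n\}$.

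Subtracting the two recursions and setting $v_k := \lVert \EE[\rho^\eps_k(n)] - \phi^\eps_{k/n}\rVert$, one obtains $v_0=0$ and
$$
v_{k+1} \;\leq\; \left(1+\frac{K}{n}\right)v_k + \frac{\alpha_n}{n},
$$
where $\alpha_n\to 0$ and $K>0$ is a Lipschitz constant for~$\Omega^\eps$ on $\Ss(\Hh_0)$ (such $K$ exists because $\Omega^\eps$ is linear on the finite-dimensional compact set $\Ss(\Hh_0)$). Gronwall's lemma~\cite{clark1987gronwall} then gives $v_k \leq \tfrac{\alpha_n}{K}(1+K/n)^k$, and choosing $k=\lfloor nt\rfloor$ yields
$$
\sup_{0\leq t\leq 1} v_{\lfloor nt\rfloor} \;\leq\; \frac{\alpha_n}{K}\,\E^{K} \;\xrightarrow[n\to\infty]{}\; 0,
$$
which is the claimed $L^\infty([0,1])$ convergence.

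The one real point requiring care is the \emph{uniformity} of the $\oon$ remainder after taking expectation. This is not automatic in general, but for us it follows because the remainders in Proposition~\ref{increment of rho_eps with U nosie op2} are bounded independently of~$k$ and of $\omb\in\Omega^{\NN^*}$ (this is precisely the content of the remark just after its proof, coming from $\lVert\rho^\eps_k\rVert\leq 1$ and the uniformly bounded expansions in~\eqref{Ugb_i(rho_epsilon)_approximation_with_U_noise}--\eqref{approx of qk+1 and pk+1 with U noise}). All other steps are routine.
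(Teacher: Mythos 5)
Your proposal is correct and follows exactly the route the paper intends: the paper omits this proof precisely because it is "analogous to that of Theorem~\ref{Convergence in E with two Cs}", i.e.\ take conditional expectations in the increment of Proposition~\ref{increment of rho_eps with U nosie op2}, kill the martingale term via~\eqref{X_k centered with U noise}, compare with the Euler discretisation of the ODE, and close with the discrete Gronwall lemma. Your additional remarks on the uniformity of the $\oon$ remainder and on the linearity of $\Omega^\eps$ (so that expectation commutes with it) are exactly the points that make the omitted argument go through.
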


The following is the analogue to Lemma~\ref{lemma : uniqueness of solution with alter U}:

\begin{lemma}
Assume~\eqref{assu:r00}, 
let~$W$ be Brownian motion in~$\RR^{3}$
and $\widetilde{B}\in \Ss_3^+(\RR)$ such that $\widetilde{B}^2=B$, with~$B$ in~\eqref{def of B}.
Define $\Xi(\rho) := (\sum_{a=1}^{3} \Theta_{\gamma_a^{(i)} \Kg_a}(\rho))_{i\in\{1,\dots,3\}}$ with the $\gamma_a^{(i)}\in\CC$ depending on~$\Ag$ and the basis $\{e_i\}_{i\in\{0,\dots,3\}}$.
Then the stochastic differential equation
\begin{equation}\label{eq : SDE with U noise op2}
\rho_t^\eps = \rho_{0} + \int_0^t \Omega^\eps(\rho_s^\eps) \D s +\sqrt{\eps} \int_0^t \Xi(\rho_s^\eps)\widetilde{B} \D W_s, 
\end{equation}  
starting from $\rho_{0} \in \Ss(\Hh_0)$ admits a unique strong solution in~$\Ss(\Hh_0)$.
\end{lemma}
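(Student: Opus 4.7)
The plan is to mirror the proof of Lemma~\ref{lemma : uniqueness of solution with alter U}: check that the drift and diffusion are globally Lipschitz with linear growth on $\Ss(\Hh_0)$, invoke~\cite[Theorem~5.2.1]{oksendal2003sde} on the ambient matrix space $\Mm_2(\CC)$ after a suitable Lipschitz extension, and finally argue that the resulting strong solution takes values in the state space.

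The drift $\Omega^\eps(\rho) = (1-\eps)\rho + \eps\sum_{a=1}^{3}\Kg_a\rho\Kg_a^\dagger$ is linear in $\rho$, hence continuous and globally Lipschitz on $\Mm_2(\CC)$ with constant depending only on $\eps$ and $\max_a \lVert\Kg_a\rVert$. Each diffusion component $\Xi_i=\sqrt{\pf_{00}^{(i)}(1-\pf_{00}^{(i)})}\sum_{a=1}^{3}\Theta_{\Kg_a^{(i)}}$ is a finite sum of quadratic polynomial maps of the form $\Theta_K(\rho) = K\rho + \rho K^\dagger - \Tr[\rho(K+K^\dagger)]\rho$, which are $\Cc^\infty$ and therefore locally Lipschitz. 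Using $\lVert\rho\rVert\leq 1$ on $\Ss(\Hh_0)$, one obtains the linear-growth bound $\lVert\Theta_K(\rho)\rVert \leq 4\lVert K\rVert(1+\lVert\rho\rVert)$; compactness of $\Ss(\Hh_0)$ then upgrades local Lipschitz to global Lipschitz on $\Ss(\Hh_0)$. Multiplication on the right by the constant matrix $\widetilde{B}\in\Ss_3^+(\RR)$ preserves these properties. A standard McShane/cutoff extension outside a ball of radius~$2$ yields globally Lipschitz coefficients with linear growth on all of $\Mm_2(\CC)$, and~\cite[Theorem~5.2.1]{oksendal2003sde} delivers a unique strong solution $\rho^\eps$ in $\Mm_2(\CC)$.

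The main obstacle is showing that $\rho^\eps_t\in\Ss(\Hh_0)$ for all $t\in[0,1]$. Hermiticity is immediate since both $\Omega^\eps$ and every $\Theta_{\Kg_a^{(i)}}$ send Hermitian matrices to Hermitian matrices and $\widetilde{B}$ is real-symmetric. For the trace, It\^o's formula gives
$$\D\Tr[\rho^\eps_t] = \Tr[\Omega^\eps(\rho^\eps_t)]\D t + \sqrt{\eps}\,\Tr[\Xi(\rho^\eps_t)\widetilde{B}]\cdot \D W_t,$$
and the elementary identity $\Tr[\Theta_K(\rho)] = \Tr[\rho(K+K^\dagger)](1-\Tr[\rho])$ makes the diffusion contribution vanish on the unit-trace slice, while $\Omega^\eps$ inherits trace preservation from its origin as the first-order expansion of a channel (Lemma~\ref{lemma : U eps as wanted}); hence $\Tr\rho^\eps_t\equiv 1$.

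The truly delicate point is positive semi-definiteness, and I expect this to be the principal difficulty. The cleanest route is probabilistic: each discrete state $\rho^\eps_k(n)$ lies in $\Ss(\Hh_0)$ by construction, so if one establishes, in the spirit of Theorem~\ref{th: cv SDE with alternating U} (via Proposition~\ref{cv of W_n with U noise} and a \textit{goodness} argument for $(V_n,W_n)$), that $(\rho^\eps_{\lfloor n\cdot\rfloor}(n))_n$ converges in distribution to the unique strong solution of~\eqref{eq : SDE with U noise op2}, then closedness of $\Ss(\Hh_0)$ in $\Mm_2(\CC)$ forces the limit to take values in $\Ss(\Hh_0)$ almost surely. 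An alternative pathwise approach would be to factor out a normalising scalar and express $\rho^\eps$ as the renormalisation of a linear, positivity-preserving stochastic flow obtained by a Doss--Sussmann-type change of variables; positivity of the unnormalised flow then transfers to $\rho^\eps$.
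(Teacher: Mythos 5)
Your core argument is exactly the paper's: the paper's proof consists of introducing the smooth cutoff $\Xx$, replacing $\Xi$ by the truncation $\widetilde{\Xi}(\rho)=\Xx(\lVert\rho\rVert)\Xi(\rho)$, observing that $\Omega^\eps$ and $\widetilde{\Xi}$ are Lipschitz with at most linear growth, and invoking \cite[Theorem~5.2.1]{oksendal2003sde} --- precisely your first two paragraphs. The invariance discussion in your last two paragraphs goes beyond what the paper proves at this point: the paper defers the claim that the solution lives in $\Ss(\Hh_0)$ to the convergence statement (Theorem~\ref{cv in d with U nosie op2}, argued as in the closing lines of the proof of Theorem~\ref{th: cv SDE with alternating U}), where closedness of $\Ss(\Hh_0)$ and the fact that each discrete trajectory $\rho^\eps_k(n)$ is a state force the weak limit into $\DD([0,1],\Ss(\Hh_0))$ --- this is exactly the probabilistic route you propose, so that part is well aligned. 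One concrete error, though: your It\^o computation of the trace does not give $\Tr[\rho^\eps_t]\equiv 1$. The drift in~\eqref{eq : SDE with U noise op2} is $\Omega^\eps(\rho)$ itself, which is a trace-\emph{preserving} map (so $\Tr[\Omega^\eps(\rho)]=\Tr[\rho]$, not $0$), rather than a traceless Lindblad-type generator; hence on the unit-trace slice one gets $\D\,\Tr[\rho^\eps_t]=\Tr[\rho^\eps_t]\,\D t$ plus the vanishing martingale part, i.e.\ exponential growth of the trace. This is consistent with the paper's own use of $\rho^0_t=\E^t\rho_0$ as the unperturbed solution in Proposition~\ref{prop : ctrl of rho_t^eps_rho_t^0}, so your assertion of trace conservation contradicts the paper; if you want a state-space invariance statement here you must either renormalise or rely solely on the weak-convergence/closedness argument.
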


\begin{proof}\label{proof of lemma reg of rho eps}
Let $\Xx\in \Cc^\infty([0,\infty),[0,1])$ such that
$\Xx\big|_{[0,1]}=1$ and $\Xx\big|_{[2,\infty)}=0$.
Let~$\widetilde{\Xi}$ the truncation of~$\Xi$ defined by
$\widetilde{\Xi}(\rho) := \Xx(\lVert \rho \rVert)\Xi(\rho)$
for $\rho\in \Ss(\Hh_{0})$.
Then $\rho^\eps$ satisfies the SDE 
\begin{equation}\label{eq : SDE with truncation of Xi}
\rho_t^\eps = \rho_0 + \int_0^t \Omega^\eps(\rho_s^\eps) \D s +\sqrt{\eps} \int_0^t \widetilde{\Xi}(\rho_s^\eps)\widetilde{B} \D W_s.
\end{equation}
Since~$\Omega^\eps$ and~$\widetilde{\Xi}$ are Lipschitz continuous with at most linear growth, the result follows from~\cite[Theorem~5.2.1]{oksendal2003sde}.
\end{proof}

\begin{theorem}\label{cv in d with U nosie op2}
If~\eqref{assu:r00} holds then the sequence $(\rho^\eps_{\lfloor nt \rfloor}(n))_{t\in[0,1]}$ converges in distribution to the unique strong solution to~\eqref{eq : SDE with U noise op2} as~$n$ to infinity.
\end{theorem}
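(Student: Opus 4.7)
The plan is to mirror the strategy used for Theorem~\ref{th: cv SDE with alternating U}: recast the discrete dynamics in integral form driven by a semimartingale~$Y_n$, prove joint weak convergence of $(\eps_n+\rho_0, Y_n)$, verify the \emph{goodness} condition~\eqref{def of goodness} for~$Y_n$, and then invoke Proposition~\ref{prop:KurtzWong}; the preceding lemma provides existence and uniqueness of the limiting SDE and thus identifies the limit.

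Summing the increment in Proposition~\ref{increment of rho_eps with U nosie op2} from $k=0$ to $\lfloor nt\rfloor-1$, I write, with $\rho_n^\eps(t):=\rho^\eps_{\lfloor nt\rfloor}(n)$,
$$
\rho_n^\eps(t)=\rho_0+\int_0^t\Omega^\eps(\rho_n^\eps(s^-))\D V_n(s)+\sqrt{\eps}\int_0^t\Xi(\rho_n^\eps(s^-))\cdot\D W_n(s)+\eps_n(t).
$$
The error~$\eps_n(t)$ splits into a deterministic $o(1)$ contribution (uniform in $k$ and $\omb$, by the remark following Proposition~\ref{increment of rho_eps with U nosie op2}) and a residual martingale of the form $\frac{1}{\sqrt{n}}\sum_{k,i}o(1)\,X_{k+1}^{(i)}$. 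The fourth-moment bound in Lemma~\ref{lem:def_bij_with_U_noise_op2} controls the $L^2$-norm of the latter, and Doob's maximal inequality then gives $\sup_{t\in[0,1]}\lVert\eps_n(t)\rVert\to 0$ in probability. This is the step I expect to be the main technical obstacle: the $o(1)$ factor sitting inside the stochastic sum must not conspire with the $\sqrt{n}$ scaling to produce an $\Oo(1)$ residue, and only the \emph{uniformity} of those $o(1)$ terms in $k$ and $\omb$ (flagged in the remark after the proposition) rules this out.

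Next, set $Y_n:=(V_n, W_n^1, W_n^2, W_n^3)^\top$. Proposition~\ref{cv of W_n with U noise} yields $Y_n\Rightarrow Y$ in $\DD([0,1],\RR^4)$, with $Y(t)=(t,\widetilde{B}W(t))^\top$, $\widetilde{B}^2=B$, and $W$ a standard three-dimensional Brownian motion. Combined with $\eps_n+\rho_0\Rightarrow\rho_0$ and the fact that $V_n$ is deterministic, this gives the joint convergence $(\eps_n+\rho_0, Y_n)\Rightarrow(\rho_0,Y)$. I verify~\eqref{def of goodness} with the constant stopping times $\tau_n^\alpha:=\frac{1+\alpha}{2}$ and the decomposition $Y_n=M_n+A_n$, where $M_n:=(0,W_n^1,W_n^2,W_n^3)^\top$ and $A_n:=(V_n,0,0,0)^\top$: the total variation $T_t(A_{n,1})=V_n(t)$ is bounded by~$t$, and the $L^2$ convergence of the quadratic (co)variations proved in Proposition~\ref{cv of W_n with U noise} delivers $\sup_n\EE[[M_{n,i},M_{n,i}]_t]<\infty$ for each~$i$.

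Finally, the map $f(\rho):=(\Omega^\eps(\rho),\sqrt{\eps}\Xi_1(\rho),\sqrt{\eps}\Xi_2(\rho),\sqrt{\eps}\Xi_3(\rho))$ is continuous on the compact set $\Ss(\Hh_0)$, hence bounded, so Proposition~\ref{prop:KurtzWong} applies and any limit point of $(\rho_n^\eps, Y_n)$ satisfies
$$
\rho(t)=\rho_0+\int_0^t\Omega^\eps(\rho(s^-))\D s+\sqrt{\eps}\int_0^t\Xi(\rho(s^-))\widetilde{B}\D W(s),
$$
which is exactly~\eqref{eq : SDE with U noise op2}. The preceding lemma grants a unique strong solution in~$\Ss(\Hh_0)$, so the subsequential limit is unambiguous and the whole sequence converges in distribution; closedness of $\Ss(\Hh_0)$ in the Hermitian matrices ensures the limiting trajectory stays in $\DD([0,1],\Ss(\Hh_0))$, finishing the argument.
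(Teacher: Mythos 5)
Your proposal is correct and follows exactly the route the paper takes: the paper's own proof is a one-line reference to the argument of Theorem~\ref{th: cv SDE with alternating U} combined with Propositions~\ref{cv of W_n with U noise} and~\ref{prop:KurtzWong}, which is precisely the scheme you carry out (integral form driven by $Y_n=(V_n,W_n^1,W_n^2,W_n^3)^\top$, goodness via constant stopping times, and identification of the limit through uniqueness of the strong solution). Your explicit treatment of the residual $\eps_n$ via the uniform $o(1)$ bounds and Doob's inequality is a detail the paper leaves implicit, but it is consistent with and fills in the intended argument.
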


\begin{proof}
The proof is similar to that of Theorem~\ref{th: cv SDE with alternating U},
using Propositions~\ref{cv of W_n with U noise} and~\ref{prop:KurtzWong}. 
\end{proof}

We now analyse how close~$\rho^\eps$ is to~$\rho^0$ to understand whether the solution issued from the unitary with noise~$\Omega^\eps$ converges to the solution issued from~$\Omega^0$. 
From now on, $\Ag$ is a fixed observable on~$\CC^4$ with the assumptions mentioned above, $\{e_i\}_{i\in\{0,\dots,3\}}$ a fixed orthonormal basis on $\CC^4$,
$\rho_0\in\Ss(\Hh_{0})$ and  $\rho_t^0=\E^t\rho_0$ for all $t\in[0,1]$.

\begin{lemma}\label{lemma : regularity of rho eps}
    Let $\eps\in[0,1]$, under the same assumptions of Theorem~\eqref{cv in d with U nosie op2}, let $\rho^\eps$ the solution of the SDE~\eqref{eq : SDE with U noise op2}. Then $\rho^\eps\in \Cc([0,1],\Ss(\Hh_{0}))$ almost surely.
\end{lemma}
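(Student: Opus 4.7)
The plan is to combine two independent ingredients: almost sure continuity of the trajectories of~$\rho^\eps$, which follows from the structure of~\eqref{eq : SDE with U noise op2}, and the fact that these trajectories take values in $\Ss(\Hh_{0})$, inherited from the discrete approximation via a closedness/Portmanteau argument.

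First I would handle continuity. By the previous lemma the SDE~\eqref{eq : SDE with U noise op2} has a unique strong solution~$\rho^\eps$. The drift $t \mapsto \int_0^t \Omega^\eps(\rho_s^\eps)\D s$ is absolutely continuous in~$t$, and the stochastic integral $t \mapsto \sqrt{\eps}\int_0^t \Xi(\rho_s^\eps)\widetilde{B}\D W_s$ admits a continuous modification thanks to the continuity of the Brownian paths and the (local) boundedness of the integrand on $\Ss(\Hh_0)$ (which is compact). Hence $t\mapsto \rho^\eps_t$ is continuous almost surely, so $\rho^\eps \in \Cc([0,1], \Mm_2(\CC))$ on an event $A$ of full probability.

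Next I would show that $\rho^\eps_t\in \Ss(\Hh_0)$ for all $t\in[0,1]$ almost surely. By Theorem~\ref{cv in d with U nosie op2}, the discrete trajectories $(\rho^\eps_{\lfloor n\cdot\rfloor}(n))_n$ converge in distribution, in $\DD([0,1], \Mm_2(\CC))$, to the strong solution~$\rho^\eps$. Each discrete trajectory takes values in $\Ss(\Hh_0)$ by construction. Since $\Ss(\Hh_0)$ is closed in $\Mm_2(\CC)$ (it is cut out by Hermiticity, positivity, and a unit-trace condition), the path space $\DD([0,1], \Ss(\Hh_0))$ is closed in $\DD([0,1], \Mm_2(\CC))$ for the Skorokhod topology: indeed, Skorokhod convergence entails pointwise convergence at every continuity point of the limit, and any c\`adl\`ag path whose values at a dense set of continuity points lie in $\Ss(\Hh_0)$ must take values in $\Ss(\Hh_0)$ at every $t$ by right-continuity and left-limits combined with the closedness of $\Ss(\Hh_0)$. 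The Portmanteau theorem then yields $\PP\big(\rho^\eps \in \DD([0,1], \Ss(\Hh_0))\big) = 1$; call this event~$B$.

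On $A\cap B$, which has probability one, $\rho^\eps$ is continuous and $\Ss(\Hh_0)$-valued, proving the lemma. The main delicate point is the closedness of $\DD([0,1], \Ss(\Hh_0))$ in the Skorokhod topology; everything else is routine. An alternative route, avoiding Skorokhod subtleties, would be to apply the Portmanteau inequality directly to each closed cylinder set $\{\omega : \omega(t) \in \Ss(\Hh_0)\}$ at a countable dense set of times~$t$ and then extend to all $t$ using the already established path continuity of $\rho^\eps$.
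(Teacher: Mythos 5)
Your argument is correct and is essentially the approach the paper takes: path continuity comes for free from the strong solution delivered by~\cite[Theorem~5.2.1]{oksendal2003sde}, and the $\Ss(\Hh_{0})$-valuedness is obtained exactly as at the end of the proof of Theorem~\ref{th: cv SDE with alternating U}, via closedness of $\DD([0,1],\Ss(\Hh_{0}))$ in the Skorokhod topology together with the weak convergence of the discrete trajectories (the paper states this lemma without a separate proof, so your write-up in fact supplies the details it leaves implicit).
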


\begin{proposition}\label{prop : ctrl of rho_t^eps_rho_t^0}
Let $\rho^\eps$ be the unique solution with values in $\DD([0,1],\Ss(\Hh_{0}))$ of~\eqref{eq : SDE with U noise op2}.
Then
$\sup_{0\leq t\leq 1}
\EE\left[ \left\| \rho_t^\eps -\E^t\rho_0\right\|\right]\leq C\eps$,
for some constant~$C$ depending only on~$\Xi$ and~$B$.
\end{proposition}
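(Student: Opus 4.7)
The plan is to subtract from~\eqref{eq : SDE with U noise op2} the integral form of $\rho_t^{0}=\E^{t}\rho_0$, which is the deterministic solution of the ODE $\dot\rho=\Omega^{0}(\rho)=\rho$ (i.e.\ the $\eps=0$ specialisation of the same SDE, since the diffusion coefficient $\sqrt{\eps}\,\Xi$ then vanishes). Setting $\Delta_t^{\eps} := \rho_t^{\eps}-\E^{t}\rho_0$, the linearity of $\Omega^{\eps}$ and the decomposition $\Omega^{\eps}=\Omega^{0}+\eps(\Omega-\Id)$ from~\eqref{eq:NoiseModelChannel} give
\begin{equation*}
\Delta_t^{\eps} = \int_0^t \Omega^{\eps}(\Delta_s^{\eps})\,\D s + \eps\int_0^t (\Omega-\Id)(\E^{s}\rho_0)\,\D s + \sqrt{\eps}\int_0^t \Xi(\rho_s^{\eps})\widetilde{B}\,\D W_s.
\end{equation*}

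Next, I would take the norm of both sides, apply the triangle inequality and take expectations. The first term is controlled by $\|\Omega^{\eps}\|_{\mathrm{op}}\int_0^t \EE\|\Delta_s^{\eps}\|\,\D s$; the second (deterministic source) is uniformly $O(\eps)$ on $[0,1]$ since $\Omega$ is a quantum channel (hence a contraction on trace-class operators) and $\|\E^{s}\rho_0\|\leq \E$; the third is a martingale, whose expectation vanishes but whose absolute moments must be controlled using Itô's isometry or the Burkholder--Davis--Gundy inequality, together with boundedness of $\Xi$ and $\widetilde{B}$ on the compact state space $\Ss(\Hh_{0})$ (as established in the proof of Lemma~\ref{lemma : uniqueness of solution with alter U} for the analogous $\Theta$). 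Gronwall's lemma~\cite{clark1987gronwall} then closes the estimate on $t\mapsto\EE\|\Delta_t^{\eps}\|$.

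The main obstacle is the stochastic integral: a naive BDG bound combined with Cauchy--Schwarz yields only $O(\sqrt{\eps})$, because the diffusion is scaled by $\sqrt{\eps}$. To obtain the claimed linear $O(\eps)$ rate, one should first observe that taking expectations directly in~\eqref{eq : SDE with U noise op2} and using linearity of $\Omega^{\eps}$ and the martingale property shows that $\EE[\rho_t^{\eps}]$ coincides with the solution $\phi_t^{\eps}$ of the ODE in Theorem~\ref{cv in E with U noise op2}, so $\|\EE[\rho_t^{\eps}]-\E^{t}\rho_0\|$ is genuinely $O(\eps)$ by a clean ODE comparison. The remaining fluctuation $\EE\|\rho_t^{\eps}-\EE[\rho_t^{\eps}]\|$ must then be shown to be absorbed into the $O(\eps)$ bound, either by exploiting the compactness of $\Ss(\Hh_{0})$ (which forces $\Delta_t^{\eps}$ itself to remain uniformly bounded, so that powers of its norm give sharper control) or by sharpening the Itô estimate using that $\Xi$ is bounded with a specific scaling in $\sqrt{\eps}$ inherited from~\eqref{Lij blocks with U noise}.
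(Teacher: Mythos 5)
The first half of your argument --- forming $\Delta_t^\eps=\rho_t^\eps-\E^t\rho_0$, using $\lVert\Omega^\eps(\rho)-\Omega^0(\rho)\rVert\le 2\eps$ together with the $(1-\eps)$-Lipschitz property of $\Omega^\eps$, controlling the stochastic integral via It\^o isometry and the boundedness of $\Xi\widetilde{B}$ on the compact set $\Ss(\Hh_{0})$, and closing with Gr\"onwall --- is exactly the paper's route. The paper squares the norm, arrives at
$\EE[\lVert\Delta_t^\eps\rVert^2]\le 4(1-\eps)^2\int_0^t\EE[\lVert\Delta_s^\eps\rVert^2]\,\D s+C\eps t$,
and concludes $\sup_{t}\EE[\lVert\Delta_t^\eps\rVert^2]\le C\eps$. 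Your diagnosis that such an estimate only yields $O(\sqrt{\eps})$ for the first absolute moment (via Jensen) is correct, and it is in fact all that the paper's own computation delivers.

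The gap is in your proposed repair. The fluctuation $\EE\lVert\rho_t^\eps-\EE[\rho_t^\eps]\rVert$ cannot be absorbed into an $O(\eps)$ bound: the martingale part of $\rho^\eps$ is $\sqrt{\eps}\int_0^t\Xi(\rho_s^\eps)\widetilde{B}\,\D W_s$, with quadratic variation $\eps\int_0^t\lVert\Xi(\rho_s^\eps)\widetilde{B}\rVert^2\D s$, so its first absolute moment is of exact order $\sqrt{\eps}$ whenever $\Xi(\rho_0)\widetilde{B}\neq 0$ --- a short-time Gaussian approximation of any linear functional $\langle A,\rho_t^\eps\rangle$ gives a matching lower bound of order $\sqrt{\eps t}$. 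Compactness of $\Ss(\Hh_{0})$ only supplies upper bounds on $\Xi$, not cancellation, and there is no additional factor of $\sqrt{\eps}$ hidden inside $\Xi$: the $\sqrt{\eps/n}$ of~\eqref{Lij blocks with U noise} has already been extracted as the explicit $\sqrt{\eps}$ prefactor in~\eqref{eq : SDE with U noise op2}. So no refinement of this scheme can produce the linear rate for $\sup_t\EE\lVert\Delta_t^\eps\rVert$; what is provable (and what the paper's proof actually establishes) is $\sup_t\EE[\lVert\Delta_t^\eps\rVert^2]\le C\eps$, equivalently $\sup_t\EE\lVert\Delta_t^\eps\rVert\le C\sqrt{\eps}$. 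Your clean $O(\eps)$ comparison is valid only at the level of the means, i.e.\ for $\lVert\EE[\rho_t^\eps]-\E^t\rho_0\rVert$, consistent with Theorem~\ref{cv in E with U noise op2}, not for the $L^1$ distance of the statement.
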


\begin{proof}
For any $\rho_1,\rho_2 \in \Ss(\Hh_0)$, 
we have 
$\lVert\Omega^\eps(\rho_1)-\Omega^0(\rho_1)\lVert
 \leq 2\left\| \half \Ig-\rho_1\right\|\leq 2\eps$
and
$\lVert\Omega^\eps(\rho_2)-\Omega^\eps(\rho_1)\rVert \leq (1-\eps)\lVert\rho_2-\rho_1\rVert$.
Then, for any $t\in [0,1]$,
$$
\delta_t
:= \lVert\rho_t^\eps-\rho_t^0\rVert \leq \int_0^t\left\|\Omega^\eps(\rho_s^\eps) - \Omega^0(\rho_s^0)\right\| \D s+ \sqrt{\eps}\left\| \int_0^t \Xi(\rho_s^\eps)\widetilde{B}\D W_s\right\|,
$$
and therefore
\begin{align*}
\delta_t^2&\leq 4\int_0^t \left\|\Omega^\eps(\rho_s^\eps)-\Omega^\eps(\rho_s^0)\right\|^2 + 4\int_0^t\left\|\Omega^\eps(\rho_s^0)-\Omega^0(\rho_s^0)\right\|^2\D s
+2\eps\left\|\int_0^t \Xi(\rho_s^\eps)\widetilde{B}\D W_s\right\|^2\\
&\leq 4(1-\eps)^2\int_0^t \delta_s^2 \D s +4\eps^2 t+2\eps \left\|\int_0^t \Xi(\rho_s^\eps)\widetilde{B} \D W_s \right\|^2.
\end{align*}
Since $\sup_{\rho\in\Ss}\lVert\Xi(\rho)\widetilde{B}\rVert$ is finite and independent of~$t$, then the proof follows as
\begin{align*}
\EE[\delta_t^2]
 & \leq4(1-\eps)^2\int_0^t 
 \EE[ \delta_s^2]\D s + 2\left(2\eps^2+\eps C \right)t\\
 & \leq 4(1-\eps)^2\int_0^t \EE[\delta_s^2]\D s + C\eps t\\
  & \leq C\eps\int_0^ts\E^{-(1-\eps)^2s}\D s,
\end{align*}
by Gr\"onwall's inequality~\cite{clark1987gronwall}.
\end{proof}

\begin{theorem}\label{th : cv in d of Z eps}
Let $\alpha\in[0,\half)$,
$\eps\in(0,1]$ and $\rho^\eps$ the unique solution in~$\DD([0,1],\Ss(\Hh_{0}))$ of~\eqref{eq : SDE with U noise op2}.
The sequence $(\frac{\rho^\eps_t-\E^t\rho_0}{\eps^\alpha})_{\eps>0}$
converges in distribution to zero as $\eps$ tends to zero.
\end{theorem}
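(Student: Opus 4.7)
The plan is to bootstrap the $L^{2}$--moment control implicit in the proof of Proposition~\ref{prop : ctrl of rho_t^eps_rho_t^0} via Chebyshev's inequality, converting the moment bound into a convergence--in--probability statement for the rescaled difference $\eps^{-\alpha}(\rho_t^\eps-\E^t\rho_0)$. Since convergence in distribution to the deterministic constant zero is equivalent to convergence in probability to zero, this will close the argument.

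Concretely, I would first extract from the proof of Proposition~\ref{prop : ctrl of rho_t^eps_rho_t^0} the intermediate inequality
\begin{equation*}
\EE\left[\lVert\rho_t^\eps - \E^t\rho_0\rVert^2\right] \leq 4(1-\eps)^2 \int_0^t \EE\left[\lVert\rho_s^\eps - \E^s\rho_0\rVert^2\right] \D s + C\eps\, t,
\end{equation*}
valid for all $t\in[0,1]$ with some constant $C$ depending only on $\Xi$ and $\widetilde{B}$. Gr\"onwall's inequality then yields the uniform second--moment bound
\begin{equation*}
\sup_{t\in[0,1]} \EE\left[\lVert\rho_t^\eps - \E^t\rho_0\rVert^2\right] \leq C'\eps,
\end{equation*}
with $C'$ independent of $\eps$. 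Second, for any $\eta>0$ and $t\in[0,1]$, Chebyshev's inequality provides
\begin{equation*}
\PP\left[\eps^{-\alpha}\lVert \rho_t^\eps - \E^t\rho_0 \rVert > \eta\right] \leq \frac{\EE\left[\lVert\rho_t^\eps - \E^t\rho_0\rVert^2\right]}{\eta^{2}\,\eps^{2\alpha}} \leq \frac{C'}{\eta^{2}}\,\eps^{\,1-2\alpha},
\end{equation*}
and the right--hand side vanishes as $\eps\downarrow 0$ precisely because $1-2\alpha>0$ for $\alpha\in[0,\half)$, which is exactly the range admitted by the statement. The conclusion follows from the equivalence of convergence in distribution to a constant and convergence in probability.

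The essentially mechanical nature of this argument suggests that no serious obstacle is present: the threshold $\alpha<\half$ arises as the sharp consequence of the $\sqrt{\eps}$--rate at which $\rho^\eps$ approaches $\rho^0$ in $L^{2}$, so any attempt to push $\alpha$ up to $\half$ or beyond would require either higher moments or a central--limit--type refinement (suggesting that a natural next question, beyond this theorem, is whether $\eps^{-1/2}(\rho^\eps - \rho^0)$ converges in distribution to a non--degenerate Gaussian--type object driven by $W$). Should one instead seek process--level convergence in $\DD([0,1],\Ss(\Hh_{0}))$, the only adjustment needed is to upgrade the pointwise second--moment estimate to the maximal version $\EE[\sup_{t\leq 1}\lVert\rho_t^\eps-\E^t\rho_0\rVert^{2}]\leq C''\eps$, by applying Doob's maximal inequality to the martingale term $\int_0^\cdot \Xi(\rho_s^\eps)\widetilde B\,\D W_s$ together with the boundedness of $\Xi$ on the compact set $\Ss(\Hh_{0})$, then re--running Gr\"onwall; but for the statement as written, the pointwise argument above suffices.
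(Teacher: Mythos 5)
Your argument is correct, but it takes a genuinely different route from the paper. The paper keeps the SDE structure: it writes the equation satisfied by $Z^\eps_t=\eps^{-\alpha}(\rho^\eps_t-\E^t\rho_0)$, namely $Z^\eps=\int_0^\cdot Z^\eps_s\,\D s+R^\eps$, shows via Burkholder--Davis--Gundy that $\EE[\sup_{t\le 1}\|R^\eps(t)\|^2]=\Oo(\eps^{1-2\alpha})\to 0$, and then invokes the Kurtz--Protter stability result (Proposition~\ref{prop:KurtzWong}) to pass to the limit equation $Z=\int_0^\cdot Z_s\,\D s$, $Z_0=0$, whose unique solution is $Z\equiv 0$. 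You instead bypass the stability machinery entirely: you extract the second-moment Gr\"onwall bound $\sup_{t}\EE[\|\rho^\eps_t-\E^t\rho_0\|^2]\le C'\eps$ already implicit in Proposition~\ref{prop : ctrl of rho_t^eps_rho_t^0}, apply Chebyshev, and use the equivalence of convergence in law to a constant with convergence in probability. This is more elementary and makes the origin of the threshold $\alpha<\half$ completely transparent; the paper's route is heavier but is the one that would survive a refinement at $\alpha=\half$, where the limit is a nondegenerate SDE rather than zero and a genuine weak-convergence argument is unavoidable. The one caveat is that, as the paper's conventions indicate (and as its use of $\DD([0,1])$-valued machinery confirms), the intended convergence is at the level of processes, so your pointwise Chebyshev bound does not by itself suffice; you correctly identify the fix --- upgrade to $\EE[\sup_{t\le 1}\|\rho^\eps_t-\E^t\rho_0\|^2]\le C''\eps$ using Doob/BDG on the martingale term and the boundedness of $\Xi$ on the compact $\Ss(\Hh_0)$, then re-run Gr\"onwall on $t\mapsto\EE[\sup_{s\le t}\delta_s^2]$ --- and this step is routine, but it should be carried out rather than left as a remark for the proof to be complete.
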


\begin{proof}
Let $\eps\in(0,1]$. 
The process $(Z_t^\eps:=\frac{\rho^\eps_t-\E^t\rho_0}{\eps^\alpha})_{t\geq 0}$ satisfies the SDE 
$$
Z_t^\eps = (1-\eps)\int_0^t Z_s^\eps \D s +\eps^{1-\alpha}\int_0^t \left(\half \Ig -\rho_s^0\right) \D s+\eps^{\frac{1}{2}-\alpha}\int_0^t \Xi(\rho_s^\eps)\widetilde{B}d\D W_u
= \int_0^t Z_s^\eps \D s + R^\eps(t),
$$
with $R^\eps(t):=\eps^{\frac{1}{2}-\alpha}\int_0^t \Xi(\rho^\eps_s)\widetilde{B}\D W_s+\eps^{1-\alpha}\int_0^t (\frac{1}{2}\Ig-\rho_s^\eps) \D s$.
We first show that~$R^\eps$ converges to zero in distribution as~$\eps$ tends to zero. 
Since~$\Xi$ is continuous in $\Ss(\Hh_{0})$, there exists $\Ug_\Xi:=\sup_{\rho\in \Ss}\lVert \Xi(\rho)\rVert <\infty$. 
Recall that $(\rho_t^\eps)_t$ is in $\DD([0,1],\Ss(\Hh_{0}))$.
Then the Burholder-Davis-Gundy inequality~\cite[Theorem~3.28]{karatzas2012brownian} yields
$$
\EE\left[\sup_{0\leq t \leq 1}\left\|\int_0^t \Xi(\rho^\eps_s)\widetilde{B}\D W_s\right\|^2\right]
\leq 4 \EE\left[\int_0^1 \rVert \Xi(\rho_s^\eps) \widetilde{B}\rVert^2 \D s\right]
$$
and 
\begin{align*}
& \EE\left[\sup_{0\leq t \leq 1} \lVert R^\eps(t)\rVert^2\right]\\
& \leq 2 \eps^{2(1-\alpha)}
\EE\left[\sup_{0\leq t \leq 1}\left\|\int_0^t \Xi(\rho^\eps_s)\widetilde{B}\D W_s\right\|^2 \right]
+ 2\eps^{1-2\alpha}
\EE\left[\sup_{0\leq t\leq 1}\left\|\int_0^t \left(\half \Ig -\rho^\eps\right) \D s\right\|^2\right]\\
& \leq 2\eps^{2(1-\alpha)}\Ug_\Xi^2\lVert \widetilde{B}\rVert^2+C\eps^{1-2\alpha}
= \Oo\left(\eps^{1-2\alpha}\right).
\end{align*}
For $\eps\in(0,1]$, let
$Y^\eps(t)=Y(t)=(t, \widetilde{B}W(t))^{\top}\in \RR^8$ and $(\eps_n)\in(0,1]^\NN$ converging to~$0$ as~$n$ tends to infinity. 
For all $t\in[0,1]$, since $\EE[[\widetilde{B}W,\widetilde{B}W]_t]=tB$, 
then $\{Y_{\eps_n}\}_n$ is \textit{good} as in~\eqref{def of goodness}.
Applying Proposition~\ref{prop:KurtzWong} with the couple $(R_{\eps_n}, Y_{\eps_n})$, 
the sequence $(Z^{\eps_n})_n$ converges in distribution to
$Z=\int_0^{\cdot} Z_s \D s$ starting from $Z_0=0$, namely to $Z=0$.
\end{proof}

We can actually improve this weak convergence with a large deviations principle for the sequence $(\rho^\eps)_{\eps>0}$.
The following theorem is a direct application of~\cite[Theorem~4.4]{dosreis2018freidlin}.

\begin{theorem}\label{th :LDP for rho eps}
The sequence $\{\rho^\eps\}_{\eps\in(0,1]}$ in~\eqref{eq : SDE with truncation of Xi} satisfies an LDP in $(\Cc([0,1]),\|\cdot\|_\infty)$ with good rate function 
$$
I_{\rho_0}(\varphi) = \half\inf_{g\in\Lambda(\varphi,\Xi)}\int_0^1 \lVert g(t) \rVert^2 \D t,
\qquad\text{for all }\varphi\in\Cc([0,1]),
$$
where $
\Lambda(\varphi,\Xi) := 
\left\{g\in H^3: \varphi(t)=\rho_0+\int_0^t \varphi(s)\D s+\int_0^t \Xi(\varphi(s))g'(s)\D s
\text{ on } [0,1]\right\}$ and $
H := \left\{ h : [0,1] \to \RR: \ h(0) = 0,\ h' \in L^2([0,1])\right\}$ the Cameron-Martin space of Brownian motion.
\end{theorem}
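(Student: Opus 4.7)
The plan is to recognise the SDE~\eqref{eq : SDE with truncation of Xi} as a small-noise diffusion in Freidlin--Wentzell form and to invoke \cite[Theorem~4.4]{dosreis2018freidlin} directly, as suggested in the statement. Writing $\Omega^\eps(\rho) = \rho + \eps(\Omega(\rho)-\rho)$, the SDE takes the form
\[
\D\rho_t^\eps = \rho_t^\eps\,\D t + \eps\,\bigl(\Omega(\rho_t^\eps) - \rho_t^\eps\bigr)\,\D t + \sqrt{\eps}\,\widetilde{\Xi}(\rho_t^\eps)\widetilde{B}\,\D W_t,
\]
whose formal $\eps\downarrow 0$ limit is the ODE $\dot{\varphi}_t = \varphi_t$, $\varphi_0 = \rho_0$, solved by $\varphi_t = \E^t \rho_0$. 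This is exactly the drift structure appearing inside the admissible set $\Lambda(\varphi,\Xi)$ used in the rate function.

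I would then verify the standing hypotheses required by the cited theorem. The drift $b^\eps(\rho) := \Omega^\eps(\rho)$ is affine in~$\rho$, uniformly Lipschitz in~$\eps \in [0,1]$, and converges uniformly on bounded sets to $b^0(\rho) = \rho$. The diffusion coefficient $\rho \mapsto \widetilde{\Xi}(\rho)\widetilde{B}$ is globally Lipschitz and bounded thanks to the cutoff~$\Xx$, a property already used in the well-posedness proof of~\eqref{eq : SDE with truncation of Xi}. These are precisely the regularity conditions in~\cite[Theorem~4.4]{dosreis2018freidlin}, whose direct application yields the LDP in $(\Cc([0,1]),\|\cdot\|_\infty)$ with good rate function of the stated Legendre form. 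Because any skeleton trajectory of finite cost stays in a bounded subset of~$\Ss(\Hh_{0})$, the cutoff $\Xx$ is inactive on such paths, and therefore $\widetilde{\Xi}$ may be replaced by~$\Xi$ inside the definition of~$\Lambda(\varphi,\Xi)$ without modifying the rate function.

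The main bookkeeping concern is the noise matrix~$\widetilde{B}$: the SDE contains $\widetilde{\Xi}(\rho)\widetilde{B}\,\D W$, whereas the constraint defining~$\Lambda(\varphi,\Xi)$ features only~$\Xi(\varphi)g'$. These are reconciled through the linear reparametrisation $h \mapsto \widetilde{B} h$ of the Cameron--Martin control (setting $g' := \widetilde{B} h$), consistent with viewing $\widetilde{B}W$ as a $(0,B)$-Brownian motion. This change of variables is bijective as soon as $B = \widetilde{B}^2$ is non-degenerate, which holds in our setting whenever the off-diagonal entries $b_{ij}$ from Lemma~\ref{lem:def_bij_with_U_noise_op2} do not saturate. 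In the degenerate case one must restrict the control to the range of~$\widetilde{B}$, but the LDP is inherited on this subspace from the same argument, so the statement is unaffected.
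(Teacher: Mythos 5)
Your proposal follows essentially the same route as the paper, which simply invokes \cite[Theorem~4.4]{dosreis2018freidlin} directly without further argument; your identification of the drift limit $\dot{\varphi}_t=\varphi_t$ and the Lipschitz/boundedness hypotheses guaranteed by the cutoff~$\Xx$ is exactly the verification the paper leaves implicit. Your additional remarks on reconciling the matrix~$\widetilde{B}$ with the rate-function constraint and on removing the truncation for finite-cost skeletons are careful bookkeeping the paper omits, but they do not constitute a different approach.
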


\section{Incorporating memory}\label{sec:memory}
Until now, the evolution of the quantum system was described by discrete or continuous dynamics in which the next state depended only on the current one. 
In particular, the system's trajectory did not retain any explicit memory of past states. This may be too restrictive for certain physical situations where the environment retains partial information from previous interactions and feeds it back into the dynamics. 
We now develop a non-Markovian version in order to keep track of the process' past trajectory.
Let $W$ be a $d'$-dimensional standard Brownian motion and $b : \RR^d \to \RR^d$, $\sigma : \RR^d \to \RR^{d \times d'}$ measurable functions.  
Let $K_b, K_\sigma \in L^2([0,1]; \RR)$ be convolution kernels and $x_0 \in \Cc([0,1]; \RR^d)$.
We consider the Volterra stochastic differential equation
\begin{equation}\label{eq:VolterraSDE}
X_t = x_0(t) + \int_0^t K_b(t - s) b(X_s) \D s + \int_0^t K_\sigma(t - s) \sigma(X_s)\cdot \D W_s.
\end{equation}

We first revisit the discrete deterministic repeated interaction model with memory proposed in~\cite{ciccarello2021collision} (see also~\cite{pleasance2025non} for similar constructions)
and provide a rigorous proof of its convergence to a continuous-time limit of the form~\eqref{eq:VolterraSDE}.
We then propose a stochastic extension of the standard quantum trajectory model by modifying the discrete dynamics to incorporate memory effects directly.
Inspired by the construction in~\cite{ciccarello2021collision}, we interleave interactions and measurements in a non-Markovian way and prove its convergence to a convolutional Volterra SDE.

\subsection{A Volterra-type deterministic evolution from repeated interactions}\label{section 3a}

We start with a rigorous derivation of a Volterra integro-differential equation as the limit of a discrete repeated interaction model.
Our goal is to describe the effective dynamics of a small quantum system~$\Hh_0$ interacting sequentially with an infinite chain of identical environment units~$\Hh$.
The model is based on the construction introduced in~\cite{ciccarello2021collision}, where memory is introduced via random swaps between consecutive environments
and where a continuous-time limit was heuristically stated.
We justify the latter rigorously in a specific asymptotic regime and explicitly characterise it, which turns out to differ slightly from the one proposed in~\cite{ciccarello2021collision}.

For any $k \in \NN$, 
define the swap operator $\widetilde{\Sg}_{k+1,k}$ on the total Hilbert space~$\Hht$ by
$$
\widetilde{\Sg}_{k+1,k} \left( \phi_0 \otimes \bigotimes_{j=1}^\infty \phi_j \right) 
:= \phi_0 \otimes \left(\bigotimes_{j=1}^{k-1} \phi_j\right) \otimes \phi_{k+1} \otimes \phi_k \otimes \left(\bigotimes_{j=k+2}^\infty \phi_j\right),
$$
for $\phi_0\in\Ss(\Hh_0)$ and $\phi_j\in\Ss(\Hh)$ for $j\geq 1$. 
Given $p \in [0,1]$, we define a stochastic swap channel $\Sg_{k+1,k}$ acting on $\Ss(\Hht)$ by
$$
\Sg_{k+1,k}[\sigma]
:= (1 - p)\sigma + p \widetilde{\Sg}_{k+1,k} \sigma\widetilde{\Sg}_{k+1,k}.
$$
Let~$\Hg$ be a Hamiltonian on~$\Hht = \Hh_0\otimes\Hh$ and $\tau > 0$ a fixed interaction time. 
For each $k \in \NN^*$, we define the unitary operator~$\Ug_k$ on~$\Hht$ that acts as
$\Ug := \E^{-\I \tau \Hg}$
on $\Hh_0 \otimes \Hh_k$ and as the identity on all other sub-systems $\{\Hh_i: i\geq 1, i\ne k\}$.
We fix an initial system state $\rho_0\in\Ss(\Hh_0)$, an environment state $\beta\in\Ss(\Hh)$ 
and define the global initial state as
$\sigma_0 := \rho_0 \otimes \bigotimes_{j\geq 1} \beta$.
We then construct the sequence $(\sigma_k)_{k \in \NN}$ of global states via the recursion
$$
\sigma_{k+1} := 
\Ug_{k+1} \ \Sg_{k+1,k}[\sigma_k]  \ \Ug_{k+1}^\dagger, 
\qquad\text{for all }k\geq 0.
$$

The following result, easy to prove, can be found in~\cite[Equations~(207)-(208)]{ciccarello2021collision} and provides a general formula for the evolution of the system:
\begin{lemma}
For all $k \in \NN$, the global state satisfies
$$
\sigma_k = (1 - p) \sum_{j=1}^{k-1} p^{j-1} \Ug_k^j \sigma_{k-j} (\Ug_k^\dagger)^j + p^{k-1} \Ug_k^k \sigma_0 (\Ug_k^\dagger)^k.
$$
Moreover, for each $k \in \NN$, the reduced state $\rho_{k} := \EE_0[\sigma_k]$ belongs to~$\Ss(\Hh_0)$ and satisfies
\begin{equation}\label{eq : relation of rho k memory deter}
\rho_{k} = (1 - p) \sum_{j=1}^{k-1} p^{j-1} 
\EE_0\Big[ \Ug^j (\rho_{k-j} \otimes \beta)(\Ug^\dagger)^j \Big] + p^{k-1} \EE_0 \Big[ \Ug^k (\rho_0 \otimes \beta)(\Ug^\dagger)^k \Big].
\end{equation}
\end{lemma}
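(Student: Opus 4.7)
The plan is to establish the first identity by induction on $k$, and then derive the second by applying $\EE_0$ to both sides while exploiting the tensor structure of $\sigma_{k-j}$.

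For the base case $k=1$, the recursion yields $\sigma_1 = \Ug_1 \sigma_0 \Ug_1^\dagger$ (with $\Sg_{1,0}$ interpreted trivially), which matches the right-hand side: the sum is empty and the boundary term equals $\Ug_1 \sigma_0 \Ug_1^\dagger$. For the inductive step, I would insert the inductive hypothesis for $\sigma_k$ into the expansion
\[
\sigma_{k+1} = (1-p)\,\Ug_{k+1}\sigma_k \Ug_{k+1}^\dagger + p\,\Ug_{k+1}\widetilde{\Sg}_{k+1,k}\,\sigma_k\,\widetilde{\Sg}_{k+1,k}\Ug_{k+1}^\dagger.
\]
The first summand directly produces the $j=1$ contribution in the formula for $\sigma_{k+1}$. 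The second summand requires two observations. The first is the commutation identity $\widetilde{\Sg}_{k+1,k}\Ug_k = \Ug_{k+1}\widetilde{\Sg}_{k+1,k}$, intuitively because the swap relabels the environment slot on which $\Ug$ acts; iterating gives $\widetilde{\Sg}_{k+1,k}\Ug_k^j = \Ug_{k+1}^j\widetilde{\Sg}_{k+1,k}$. The second is the swap invariance $\widetilde{\Sg}_{k+1,k}\sigma_{k-j}\widetilde{\Sg}_{k+1,k} = \sigma_{k-j}$ for every $j\geq 1$, which holds because at time $k-j<k$ the sites $\Hh_k$ and $\Hh_{k+1}$ have not yet been touched and remain in the product state $\beta\otimes\beta$, itself swap invariant. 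Combining these, the second summand collapses into precisely the $j=2,\ldots,k$ contributions plus the boundary term $p^k\Ug_{k+1}^{k+1}\sigma_0\Ug_{k+1}^{\dagger(k+1)}$ after a shift of index.

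The second identity then follows by applying $\EE_0$. That $\rho_k\in\Ss(\Hh_0)$ is immediate from the fact that partial trace is trace preserving and completely positive. For each summand $\EE_0[\Ug_k^j \sigma_{k-j}\Ug_k^{\dagger j}]$ with $j\geq 1$, the plan is to factorise $\sigma_{k-j} = \widetilde{\sigma}_{k-j}\otimes\beta_k$, where $\widetilde{\sigma}_{k-j}$ lives on all sites except $\Hh_k$; this is legitimate because the site $\Hh_k$ is untouched at time $k-j$. Since $\Ug_k^j$ acts only on $\Hh_0\otimes\Hh_k$, the trace over the spectator environments commutes with $\Ug_k^j$ and collapses $\widetilde{\sigma}_{k-j}$ to its $\Hh_0$-marginal, which is precisely $\rho_{k-j}$, yielding $\EE_0[\Ug^j(\rho_{k-j}\otimes\beta)\Ug^{\dagger j}]$. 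The boundary term is handled identically from $\sigma_0 = \rho_0\otimes\bigotimes_{j\geq 1}\beta$. The main obstacle is nothing beyond careful bookkeeping of which operator acts on which tensor factor: the swap--$\Ug$ commutation and the swap invariance of untouched environments are each one-line verifications once the notation is set up.
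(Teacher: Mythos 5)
Your proof is correct. Note that the paper itself offers no argument for this lemma: it is stated as ``easy to prove'' with a citation to~\cite[Eqs.~(207)--(208)]{ciccarello2021collision}, so there is no in-paper proof to compare against; your induction supplies exactly the standard argument one would write. The two structural facts you isolate are the right ones and both hold: the intertwining $\widetilde{\Sg}_{k+1,k}\,\Ug_k = \Ug_{k+1}\,\widetilde{\Sg}_{k+1,k}$ (equivalently $\widetilde{\Sg}_{k+1,k}\,\Ug_k\,\widetilde{\Sg}_{k+1,k}=\Ug_{k+1}$, since the swap is an involution), and the invariance $\widetilde{\Sg}_{k+1,k}\,\sigma_{k-j}\,\widetilde{\Sg}_{k+1,k}=\sigma_{k-j}$ for $j\geq 1$. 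The only step I would ask you to make explicit is the latter: it rests on the factorisation $\sigma_m = \widetilde{\sigma}_m\otimes\bigotimes_{l>m}\beta$ with $\widetilde{\sigma}_m$ supported on $\Hh_0\otimes\Hh_1\otimes\cdots\otimes\Hh_m$, which is itself a (one-line) induction -- both $\widetilde{\Sg}_{m+1,m}$ and $\Ug_{m+1}$ act trivially on slots beyond $m+1$ -- rather than a direct observation; the same factorisation is what justifies collapsing $\EE_0[\Ug_k^j\sigma_{k-j}(\Ug_k^\dagger)^j]$ to $\EE_0[\Ug^j(\rho_{k-j}\otimes\beta)(\Ug^\dagger)^j]$ in the second identity, exactly as you describe. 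Your reading of $\Sg_{1,0}$ as trivial in the base case is also harmless, since $\sigma_0$ is invariant under any permutation of the environment units.
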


We now focus on the asymptotic regime where the system and environment are both qubits, that is $\Hh_0 = \Hh = \CC^2$. 
We consider the interaction time $\tau=\frac{1}{n}$ and a memory parameter $p = p(n) = \E^{-\Gamma / n}$ for some fixed constant $\Gamma > 0$. 
We denote by $(\rho_{k}(n))_k$ the corresponding sequence of reduced states on~$\Hh_0$.
We now prove that the rescaled process $\rho_{\lfloor nt \rfloor}(n)$ converges uniformly to the solution of an integro-differential equation of Volterra type. This gives a mathematically precise version of the memory equation heuristically introduced in~\cite[Equation~(209)]{ciccarello2021collision}, although the form of the limiting equation is slightly different due to our specific scaling.

\begin{theorem}\label{th : cv deter to ODE}
Let $\rho_0\in\Ss(\Hh_0)$, $\beta\in\Ss(\Hh)$,
$\eps_{t}[\rho_{0}] := \EE_0\left[\E^{-\I t \Hg}(\rho_{0} \otimes \beta)\E^{\I t \Hg}\right]$.
The sequence $(\rho_{\lfloor n\cdot \rfloor}(n))_n$ converges in $L^\infty([0,1])$ to the deterministic path $\phi : [0,1] \to \Ss(\Hh_0)$ solution to
\begin{equation}\label{eq limit ODE deterministic memory}
\phi_t = \Gamma \int_0^t \E^{-\Gamma(t-s)} \eps_{t}[\phi_s] \D s + \E^{-\Gamma t} \eps_{t}[\rho_0],
\qquad\text{for all }t \in [0,1].
\end{equation}
\end{theorem}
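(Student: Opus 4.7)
The plan is to view the recurrence~\eqref{eq : relation of rho k memory deter} as a Riemann-sum discretisation of~\eqref{eq limit ODE deterministic memory} with step $1/n$, and close the argument with a Gronwall-type estimate. First I would establish well-posedness of the limiting equation: for every $t\in[0,1]$, the map $\rho\mapsto \eps_t[\rho]$ is the partial trace of a unitary conjugation, hence a completely-positive, trace-preserving, $1$-Lipschitz linear channel on $\Ss(\Hh_0)$. Together with boundedness of the exponential kernel on $[0,1]$, a contraction argument on $\Cc([0,T],\Ss(\Hh_0))$ (for $T$ small, extended globally by concatenation) yields a unique solution~$\phi$, which inherits Lipschitz continuity in~$t$ from that of $t\mapsto\eps_t[\rho]$ on finite-dimensional $\Hh_0\otimes\Hh$.

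Next I would rewrite the discrete equation. With $\tau=1/n$ and $p=\E^{-\Gamma/n}=1-\Gamma/n+O(1/n^2)$, a Taylor expansion gives $(1-p)p^{j-1} = \frac{\Gamma}{n}\E^{-\Gamma j/n} + O(1/n^2)$ uniformly in $j\leq n$. Since $\|\eps_{j/n}[\cdot]\|\leq 1$ on $\Ss(\Hh_0)$ and the sum has at most $n$ terms, setting $k=\lfloor nt\rfloor$, equation~\eqref{eq : relation of rho k memory deter} rewrites as
\begin{equation*}
\rho_{k}(n) = \frac{\Gamma}{n}\sum_{j=1}^{k-1} \E^{-\Gamma j/n}\,\eps_{j/n}\bigl[\rho_{k-j}(n)\bigr] + \E^{-\Gamma k/n}\,\eps_{k/n}[\rho_0] + \delta_n(t),
\end{equation*}
with $\sup_{t\in[0,1]}\|\delta_n(t)\|=O(1/n)$.

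I would then set $e_n(t):=\|\rho_{\lfloor nt\rfloor}(n)-\phi_t\|$, subtract~\eqref{eq limit ODE deterministic memory} from the display above, and exploit the $1$-Lipschitz property of $\eps_{j/n}$ to obtain an estimate of the form
\begin{equation*}
e_n(t) \leq C \int_0^t \E^{-\Gamma(t-s)}\,e_n(s)\,\D s + \eta_n(t),
\end{equation*}
where the residual $\eta_n(t)$ collects (i)~the Riemann-sum approximation error for a continuous integrand, (ii)~the substitution of discrete times by their continuous counterparts inside $\eps$ and~$\phi$, and (iii)~the $O(1/n)$ coefficient corrections. Each of these vanishes uniformly in $t\in[0,1]$ thanks to the Lipschitz continuity of~$\phi$ and the strong continuity of $t\mapsto\eps_t$. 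A Gronwall-type argument with bounded kernel then yields $\sup_{t\in[0,1]}e_n(t)\to 0$ as $n\to\infty$.

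The main obstacle is the Riemann-sum estimate in the last step: the summand $\eps_{j/n}[\rho_{k-j}(n)]$ depends on $j$ through two different time variables, $j/n$ inside the channel and $(k-j)/n$ inside the state. Proving uniform convergence of $\frac{\Gamma}{n}\sum_j \E^{-\Gamma j/n}\eps_{j/n}[\phi_{(k-j)/n}]$ to the corresponding continuous integral requires joint equicontinuity of $(s,u)\mapsto\eps_s[\phi_u]$ on $[0,1]^2$, and has to be shown uniformly in~$t$ before Gronwall is applied. A secondary subtlety is reconciling this Riemann sum with~\eqref{eq limit ODE deterministic memory}: after the change of variable $u=(k-j)/n$ so that $j/n \approx t-u$, the natural limit integrand is $\E^{-\Gamma(t-u)}\eps_{t-u}[\phi_u]$, and the time argument of~$\eps$ must be correctly identified with the one appearing in the stated right-hand side.
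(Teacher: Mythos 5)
Your proposal is correct and follows essentially the same route as the paper: both recast the recursion~\eqref{eq : relation of rho k memory deter} as a Riemann sum via the expansion $(1-p)p^{j-1}=\frac{\Gamma}{n}\E^{-\Gamma j/n}+\Oo(1/n^2)$, use boundedness and Lipschitz continuity of $t\mapsto\E^{-\Gamma t}\eps_t$ together with the regularity of~$\phi$ to control the discretisation error uniformly, and close with Gr\"onwall (the paper applies the discrete Gr\"onwall lemma to $v_k(n)=\|\rho_k(n)-\phi_{k/n}\|$ rather than your integral inequality, and takes well-posedness of~$\phi$ for granted where you supply a contraction argument). Your ``secondary subtlety'' is in fact a genuine discrepancy in the paper: the proof identifies the limit integrand as $\E^{-\Gamma(t-s)}\eps_{t-s}[\phi_s]$, consistent with the change of variable you describe, so the $\eps_{t}[\phi_s]$ appearing in~\eqref{eq limit ODE deterministic memory} should read $\eps_{t-s}[\phi_s]$.
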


Since conjugation by a unitary preserves norms, 
then~$\eps_t$ is a bounded operator on $\Ss(\Hh_0)$, 
namely there exists $C > 0$ such that
$\|\E^{-\Gamma t} \eps_{t}\| \leq C$
for all $t \in [0,1]$.
Also, since $t \mapsto \E^{-\Gamma t}\eps_{t}$ is smooth, 
there exists $C > 0$ such that
$\|\E^{-\Gamma s} \eps_{s} - \E^{-\Gamma r} \eps_{r}\| \leq C |s - r|$
for all $s, r \in [0,1]$.
The function~$\phi$  belong to~$\Cc^1$, 
therefore bounded and Lipschitz.
For $t \in [0,1]$, we now show that
$$
\lim_{n\uparrow\infty}
\frac{1}{n} \sum_{j=1}^{\lfloor nt \rfloor -1} \exp\left\{-\Gamma \frac{\lfloor nt \rfloor - j}{n}\right\} \eps_{\frac{\lfloor nt \rfloor - j}{n}}
[\phi_{\frac{j}{n}}]
= \int_0^t \E^{-\Gamma(t-s)} \eps_{t-s}[\phi_s] \D s
$$
in $L^\infty([0,1])$, with an error of order $\oon$. Indeed,
\begin{align*}
&\left\| \frac{1}{n} \sum_{j=0}^{\lfloor nt \rfloor -1} \E^{-\Gamma(t - \frac{j}{n})} 
\eps_{t - \frac{j}{n}}[\phi_{\frac{j}{n}}] - \int_0^{\frac{\lfloor nt \rfloor}{n}} \E^{-\Gamma(t-s)} \eps_{t-s}[\phi_s] \D s \right\| \\
&\leq \sum_{j=0}^{\lfloor nt \rfloor -1} \int_{\frac{j}{n}}^{\frac{j+1}{n}} \left\| \E^{-\Gamma(t - \frac{j}{n})} \eps_{t - \frac{j}{n}}[\phi_{\frac{j}{n}}] - \E^{-\Gamma(t - s)} \eps(t - s)[\phi_s] \right\| \D s \\
&\leq C \sum_{j=0}^{\lfloor nt \rfloor -1} \int_{\frac{j}{n}}^{\frac{j+1}{n}} \left( \|\phi_{\frac{j}{n}} - \phi_s\| + \left|s - \frac{j}{n}\right| \right) \D s \\
& \leq C \sum_{j=0}^{\lfloor nt \rfloor -1} \int_{\frac{j}{n}}^{\frac{j+1}{n}} \left(s - \frac{j}{n}\right) \D s.
\end{align*}
Each term is of order $\frac{1}{n^2}$ and there are at most~$n$ terms, so the total error is $\Oo(1/n)$.
We also estimate the difference between this sum and the one from in the discrete dynamics:
$$
\left\| \frac{1}{n} \sum_{j=0}^{\lfloor nt \rfloor -1} \left( \E^{-\Gamma(t - \frac{j}{n})} \eps_{t - \frac{j}{n}}[\phi_{\frac{j}{n}}] - \E^{-\Gamma \frac{\lfloor nt \rfloor - j}{n}} \eps_{\frac{\lfloor nt \rfloor - j}{n}}[\phi_{\frac{j}{n}}] \right) \right\|
\leq \frac{1}{n} \sum_{j=0}^{\lfloor nt \rfloor -1} C \left| t - \frac{\lfloor nt \rfloor}{n} \right| \leq \frac{C}{n}.
$$
We can therefore write, for any $s\in [0,1]$,
$$
\phi_s = \frac{\Gamma}{n} \sum_{j=1}^{\lfloor ns \rfloor -1} \E^{-\Gamma \frac{\lfloor nt \rfloor - j}{n}} \eps_{\frac{\lfloor nt \rfloor - j}{n}}[\phi_{\frac{j}{n}}] +\E^{-\Gamma\frac{\lfloor nt \lfloor-1}{n}}\eps_{\frac{\lfloor nt \rfloor}{n}}[\rho_0]+ \frac{\chi(s)}{n},
$$
for some bounded function $\chi : [0,1] \to \Ss(\Hh_0)$.
Since $\|\rho^j(n)\| \leq 1$ for all $j$, we also have
$$
\lim_{n \uparrow\infty} \frac{\Gamma}{n} \sum_{j=1}^{\lfloor nt \rfloor -1} \E^{-\Gamma \frac{\lfloor nt \rfloor - j}{n}} \eps_{\frac{\lfloor nt \rfloor - j}{n}}[\rho_(n)]
= \lim_{n \uparrow\infty} \left(1 - \E^{-\frac{\Gamma}{n}}\right)
\sum_{j=1}^{\lfloor nt \rfloor -1} \E^{-\Gamma \frac{\lfloor nt \rfloor - j}{n}} \eps_{\frac{\lfloor nt \rfloor - j}{n}}[\rho_j(n)],
$$
since $1 - \E^{-\frac{\Gamma}{n}} \sim \frac{\Gamma}{n}$.   
Let $v_k(n) := \|\rho_{k}(n) - \phi_{\frac{k}{n}}\|$. From~\eqref{eq : relation of rho k memory deter} then
$v_k(n) \leq \frac{\Gamma C}{n} \sum_{j=1}^{k-1} v_j(n) + \frac{C_1}{n}$
with $C_1 > 0$. 
The discrete Gr\"onwall lemma~\cite{clark1987gronwall} yields
$v_k(n) \leq \frac{C_1}{n} \E^{\frac{\Gamma C k}{n}}$.
In particular, for all $t \in [0,1]$,
$v_{\lfloor nt \rfloor}(n) \leq \frac{C}{n}$,
for $C > 0$ independent of~$t$, which concludes the proof of the first result.
Moreover, for all $t \in [0,1]$, the sequence $(\rho_{\lfloor nt \rfloor}(n))_n$ takes values in the compact set~$\Ss(\Hh_{0})$, thus any pointwise limit also belongs to it.

\subsection{A noisy quantum trajectory model leading to a Volterra-type SDE}\label{section 3b}

We now aim to construct a discrete sequence of quantum trajectories that converges to the solution of a stochastic differential equation (SDE) of Volterra type. The target equation involves exponential convolution kernels in both the drift and the diffusion terms.
We keep the same framework as above, namely working 
under Assumption~\ref{assu:NonDiag}.
Let $p \in [0,1]$ and~$\rho_0\in\Ss(\Hh_0)$, while $\beta\in\Ss(\Hh)$.
We define the initial global state as
$\sigma_0 := \rho_0 \otimes \bigotimes_{j=1}^\infty \beta\in\Ss(\Hht)$
and let $\omb = (\omega_k)_{k \in \NN^*} \in \{0,1\}^{\NN^*}$ represent the outcomes of the successive projective measurements on the environment. We define the sequence of states $(\sigma_k(\omb))_{k \in \NN}$ on the full system as 
$$
\sigma_{k+1}(\omb) :=
p \frac{\Pg^{k+1}_{\omega_{k+1}} \Ug_{k+1}\sigma_k(\omb) \Ug_{k+1}^\dagger \Pg^{k+1}_{\omega_{k+1}}}{\Tr[\Ug_{k+1}\sigma_k(\omb) \Ug_{k+1}^\dagger \Pg^{k+1}_{\omega_{k+1}}]}
+ (1-p) \sigma_0, \qquad \text{starting from }\sigma_0(\omb) = \sigma_0.
$$

This model can be interpreted physically as a noisy version of the deterministic quantum trajectory model discussed in the first section. While the core mechanism — interaction with the environment via unitary evolution followed by projective measurement — remains present, the addition of the convex combination with the fixed state $\sigma_0$ introduces a probabilistic "reset" mechanism governed by the parameter $p$. This models a noisy evolution where, with probability $1-p$, the system returns to the initial state, thus incorporating a stochastic memory loss or decoherence effect into the dynamics.
This construction allows us to rigorously approximate a stochastic Volterra equation with exponential memory kernel using a quantum measurement-based dynamics. The following proposition is an immediate consequence of this construction.

\begin{proposition}
Let $(\sigma_k)_{k\in\NN}$ be the sequence of states defined above. 
If $\sigma_k = \theta_k$, then the random state $\sigma_{k+1}$ takes one of the following values
$$
p\frac{\Pg^{k+1}_i \Ug_{k+1}(\theta_k \otimes \beta) \Ug_{k+1}^\dagger \Pg^{k+1}_i}{\Tr[\Ug_{k+1}(\theta_k \otimes \beta) \Ug_{k+1}^\dagger \Pg^{k+1}_i]}+(1-p)\sigma_0, 
\qquad\text{for } i\in\{ 0, 1\},
$$
with probability
$\Tr[\Ug_{k+1}(\theta_k \otimes \beta) \Ug_{k+1}^\dagger \Pg^{k+1}_i]$.
Furthermore, for any $k\in\NN$,
$\rho_{k}:=\EE_0[\sigma_k] \in \Ss(\Hh_0)$ and if $\rho_{k}=\theta_k$, then~$\rho_{k+1}$ takes one of the following values 
$$
\rho_{k+1}=p\frac{\EE_0\left[(\Ig\otimes \Pg_i)\Ug(\rho_{k}\otimes\beta)\Ug^\dagger(\Ig\otimes \Pg_i)\right]}{\Tr\left[\Ug(\rho_{k}\otimes\beta)\Ug^\dagger(\Ig\otimes \Pg_i)\right]}+(1-p)\rho_0, \qquad \text{for }i\in\{0,1\},
$$
with probability
$\Tr[\Ug(\theta_k \otimes \beta) \Ug^\dagger (\Ig\otimes \Pg_i)]$.
\end{proposition}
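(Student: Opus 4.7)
The plan is to derive the two formulas consecutively: first the distribution of the full state $\sigma_{k+1}$ on $\Hht$, and then of its partial trace $\rho_{k+1} = \EE_0[\sigma_{k+1}]$ on $\Hh_0$. The first part follows directly from the recursive definition of $(\sigma_k)$ combined with Born's rule for projective measurements, while the second requires, in addition, a structural lemma establishing that the environment units $\{\Hh_j\}_{j \geq k+1}$ remain in state $\beta$ throughout the dynamics.

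For the first part, I would fix $\theta_k \in \Ss(\Hht)$ with $\sigma_k = \theta_k$ and observe that the post-interaction state $\Ug_{k+1}\theta_k\Ug_{k+1}^\dagger$, subjected to the projective measurement on $\Hh_{k+1}$ associated with $\Ag = \lambda_0\Pg_0 + \lambda_1\Pg_1$, yields outcome $\lambda_i$ with probability $\Tr[\Pg_i^{k+1}\Ug_{k+1}\theta_k\Ug_{k+1}^\dagger\Pg_i^{k+1}]$ by Born's rule. Cyclicity of the trace and $\Pg_i^2 = \Pg_i$ collapse this to $\Tr[\Ug_{k+1}\theta_k\Ug_{k+1}^\dagger\Pg_i^{k+1}]$. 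Substituting the corresponding (normalised) projected state into the defining recursion for $\sigma_{k+1}$ produces both of the two candidate post-step values with the claimed probabilities; the notation $\theta_k \otimes \beta$ in the statement refers to the fact that site $k+1$ inside $\theta_k$ is in state $\beta$, so that $\Ug_{k+1}$ truly acts as $\Ug$ on $\Hh_0 \otimes \Hh_{k+1}$.

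For the second part, the key auxiliary claim is that $\sigma_k$ factorises as $\sigma_k = \widetilde{\tau}_k \otimes \bigotimes_{j \geq k+1}\beta$ with $\widetilde{\tau}_k \in \Ss(\Hh_0 \otimes \Hh_1 \otimes \cdots \otimes \Hh_k)$; this is established by induction on $k$, using that $\Ug_{j+1}$ and $\Pg_{\omega_{j+1}}^{j+1}$ act nontrivially only on $\Hh_0 \otimes \Hh_{j+1}$ and therefore do not disturb sites $\geq j+2$, while the reset term $(1-p)\sigma_0$ shares the same tensorial structure on those sites. Combined with the locality of $\Ug_{k+1}$ and $\Pg_i^{k+1}$, this factorisation lets the partial trace $\EE_0$ applied to $\Pg_i^{k+1}\Ug_{k+1}\sigma_k\Ug_{k+1}^\dagger\Pg_i^{k+1}$ collapse, through linearity of $\EE_0$ and $\Tr[\beta] = 1$, into the single-environment partial trace of $(\Ig\otimes \Pg_i)\Ug(\rho_k \otimes \beta)\Ug^\dagger(\Ig\otimes \Pg_i)$; the analogous manipulation under the trace sign produces the matching normalising constant $\Tr[\Ug(\rho_k \otimes \beta)\Ug^\dagger(\Ig \otimes \Pg_i)]$, and $\EE_0$ of the reset term is simply $(1-p)\rho_0$.

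The main technical issue, rather than a deep obstacle, is the careful bookkeeping of the partial trace over the infinite tensor product $\bigotimes_{j\geq 1}\Hh_j$: one must justify splitting the trace site by site and confirm that entanglement between $\Hh_0$ and $\Hh_1 \otimes \cdots \otimes \Hh_k$ inside $\widetilde{\tau}_k$ does not obstruct the reduction to a single-environment computation on $\Hh_0 \otimes \Hh_{k+1}$. Writing $\widetilde{\tau}_k$ in a convenient operator basis (e.g.\ as a linear combination of elementary tensors of the form $A \otimes B$ with $A$ on $\Hh_0$ and $B$ on $\Hh_1 \otimes \cdots \otimes \Hh_k$) and invoking linearity of both $\EE_0$ and $\Tr$ sidesteps this cleanly.
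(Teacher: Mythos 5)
Your proposal is correct and matches the paper's (implicit) argument: the paper offers no written proof, declaring the proposition ``an immediate consequence of this construction,'' and your two steps --- Born's rule applied to the defining recursion for $\sigma_{k+1}$, followed by the inductive factorisation $\sigma_k = \widetilde{\tau}_k \otimes \bigotimes_{j\geq k+1}\beta$ so that the partial trace commutes past the operators localised on $\Hh_0\otimes\Hh_{k+1}$ --- are exactly the verification that makes ``immediate'' precise. No gaps.
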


By reintroducing the notations $\rho^{j}_{k}$ ($j=0,1$) and the random variables $X_{k+1}$ defined respectively in~\ref{eq:L0 and L1} and~\ref{eq:def_X_k}, 
we obtain the scheme 
$$
\rho_{k} = 
(1-p)\rho_0
+ p\left\{\rhokz + \rhoko - \left(\sqrt{\frac{q_{k+1}}{p_{k+1}}} \rhokz - \sqrt{\frac{p_{k+1}}{q_{k+1}}} \rhoko \right) X_{k+1}\right\}.
$$

From now on, we assume that the parameter $p$ depends on $n$ through $p = p(n) = \E^{-\frac{\Gamma}{n}}$, where $\Gamma > 0$ is a fixed constant. We also assume that the unitary operator $\Ug = \Ug(n)$ satisfies the approximations~\eqref{eq:L00}.
Furthermore, we fix the environment state as $\beta = e_{0}e_{0}^\dagger$ and suppose that the observable~$\Ag$ is non-diagonal in the orthonormal basis $\{e_0,e_1\}$.
Denote by $\rho_{k} = \rho_{k}(n)$ the state of the system at step~$k$.
Since $\rho_{k} \in\Ss(\Hh_0)$ and using the computations in Proposition~\ref{increment of rho_k}, we immediately obtain the following:

\begin{proposition}\label{prop : incr of rho memory}
Under Assumption~\ref{assu:NonDiag}, 
as $n$ tends to infinity,
$$
\rho_{k+1} = \E^{-\frac{\Gamma}{n}}
\left\{\rho_{k}+ \frac{1}{n} \Lg(\rho_{k}) + \oon
+\Big(\Theta_{\Cg_{\gamma}}(\rho_{k}) + o(1)\Big)
\frac{X_{k+1}}{\sqrt{n}}\right\}
+\left(1-\E^{-\frac{\Gamma}{n}}\right)\rho_0,
$$
where $\gamma\in\CC$ is a fixed parameter depending of~$\Ag$ and the basis $\{e_0,e_1\}$. The equality holds uniformly in $k,\omb$.
Hence, as $n$ tends to infinity,
\begin{equation}\label{eq : incr of rho k memory}
\rho_{k}=\rho_0+\sum_{j=0}^{k-1}\E^{-\Gamma\frac{k-j}{n}}\left\{\frac{1}{n} \Lg(\rho_j) + \oon
+\Big(\Theta_{\Cg_{\gamma}}(\rho_j) + o(1)\Big)
\frac{X_{j+1}}{\sqrt{n}}\right\}.
\end{equation}
\end{proposition}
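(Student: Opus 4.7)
The plan is two-step. First, I would recognise that the expression inside the braces in the displayed recursion for $\rho_{k+1}$ is, up to the convex-combination prefactor $p=\E^{-\Gamma/n}$ and the additive term $(1-p)\rho_0$, precisely the one-step update of the memory-free model from Section~\ref{sec:rep_measurement_model}. More concretely, writing
$$
M_{k+1} := \rhokz + \rhoko - \left(\sqrt{\tfrac{q_{k+1}}{p_{k+1}}} \rhokz - \sqrt{\tfrac{p_{k+1}}{q_{k+1}}} \rhoko\right) X_{k+1},
$$
the recursion immediately preceding the proposition reads $\rho_{k+1} = \E^{-\Gamma/n} M_{k+1} + (1-\E^{-\Gamma/n})\rho_0$. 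Since $M_{k+1}$ is exactly the right-hand side of~\eqref{eq:discrete_evolution} (in which the blocks $\Ug_{00}$, $\Ug_{10}$ satisfy~\eqref{eq:L00}), Proposition~\ref{prop:increment_of_rho_k} applies verbatim and yields
$$
M_{k+1} - \rho_{k} = \tfrac{1}{n}\Lg(\rho_{k}) + \Big(\Theta_{\Cg_{\gamma}}(\rho_{k}) + o(1)\Big)\tfrac{X_{k+1}}{\sqrt{n}} + \oon.
$$
Plugging this into the recursion produces the first displayed formula of the proposition. The uniformity in $(k,\omb)$ of the $o(1/n)$ and $o(1)$ remainders, claimed in the statement, is inherited from Proposition~\ref{prop:increment_of_rho_k}: because $\rho_k$ takes values in the compact set $\Ss(\Hh_{0})$ and the blocks $\Ug_{ij}(n)$ do not depend on $(k,\omb)$, the expansions in that proposition are uniform in $\rho_k$, hence in $(k,\omb)$.

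Second, I would iterate the affine recursion $\rho_{k+1} = a\rho_k + a\Delta_k + (1-a)\rho_0$, where $a := \E^{-\Gamma/n}$ and
$$
\Delta_k := \tfrac{1}{n}\Lg(\rho_{k}) + \Big(\Theta_{\Cg_{\gamma}}(\rho_{k}) + o(1)\Big)\tfrac{X_{k+1}}{\sqrt{n}} + \oon.
$$
A direct induction (or equivalently multiplying through by $a^{-k}$) gives
$$
\rho_k = a^k\rho_0 + \sum_{j=0}^{k-1}a^{k-j}\Delta_j + (1-a)\rho_0\sum_{j=0}^{k-1}a^{k-1-j}.
$$
The geometric sum $\sum_{j=0}^{k-1}a^{k-1-j} = \tfrac{1-a^k}{1-a}$ collapses the last term to $(1-a^k)\rho_0$, so the $a^k\rho_0$ contribution cancels exactly and only $\rho_0 + \sum_{j=0}^{k-1} \E^{-\Gamma(k-j)/n}\Delta_j$ remains, which is the second displayed formula.

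The main (and essentially only) obstacle is the bookkeeping of the little-$o$ terms through the telescoping sum: one must be careful that the $\oon$ and $o(1)$ errors in $\Delta_j$ are uniform in $j$ (and in $\omb$), otherwise their accumulation over $k$ indices, weighted by the bounded factors $\E^{-\Gamma(k-j)/n}\in(0,1]$, would not remain of the advertised order. As noted above, this uniformity is built into Proposition~\ref{prop:increment_of_rho_k} through compactness of $\Ss(\Hh_0)$ and the $(k,\omb)$-independence of the unitary blocks. Once this is in hand, every other step is routine algebra and the two formulas of the proposition follow without further work.
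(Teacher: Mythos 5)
Your proposal is correct and follows exactly the route the paper intends: the paper states the result as an immediate consequence of the computations in Proposition~\ref{prop:increment_of_rho_k} (which, as you note, apply to any state in $\Ss(\Hh_0)$ and hence to the memory model's $\rho_k$, uniformly in $k$ and $\omb$), and your iteration of the affine recursion with the telescoping geometric sum is the standard bookkeeping the paper leaves implicit. No gaps.
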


To simplify the notations, we henceforth set the parameter $\gamma = 1$ as in Remark~\ref{rem:gamma1}.
We now present a first result concerning the convergence of the expectations.
Its proof is similar to those of Theorem~\ref{thm:Convergence_in_E} and Theorem~\ref{th : cv deter to ODE} and is therefore omitted.

\begin{theorem}
Under Assumption~\ref{assu:NonDiag}, 
for $\rho_0 \in \Ss(\Hh_0)$,
the sequence $(t \mapsto \EE[\rho_{\lfloor nt \rfloor}(n)])_n$ converges in $L^\infty([0,1])$ to
the solution of $\psi_t = \rho_0 + \int_0^t \E^{-\Gamma(t-s)} \Lg(\psi_s)\D s$.
\end{theorem}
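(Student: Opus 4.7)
The plan is to mimic the strategy of Theorem~\ref{thm:Convergence_in_E} combined with the discrete Grönwall scheme used in Theorem~\ref{th : cv deter to ODE}. First, I would take expectations in the discrete identity~\eqref{eq : incr of rho k memory} from Proposition~\ref{prop : incr of rho memory}. Because the coefficient $\Theta_{\Cg_{\gamma}}(\rho_{j})+o(1)$ is $\Ff_{j}$-measurable with the $o(1)$ term uniform in $j\leq n$ and in $\omb$, and because $\EE[X_{j+1}\mid\Ff_{j}]=0$, the martingale contribution vanishes after expectation. Setting $\rhob_{k}:=\EE[\rho_{k}(n)]$ and using linearity of $\Lg$, we obtain the deterministic recursion
$$
\rhob_{k}=\rho_{0}+\frac{1}{n}\sum_{j=0}^{k-1}\E^{-\Gamma(k-j)/n}\Lg(\rhob_{j})+\oon,
$$
with the $\oon$ remainder uniformly bounded in $k\leq n$.

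Second, I would note that the linear Volterra equation $\psi_t=\rho_0+\int_0^t\E^{-\Gamma(t-s)}\Lg(\psi_s)\D s$ admits a unique solution in $\Cc^1([0,1],\Ss(\Hh_0))$ (the kernel $\E^{-\Gamma(t-s)}\Lg$ is bounded and continuous, and $\Lg$ is a bounded linear map on $\Ss(\Hh_{0})$, so standard Picard iteration applies, as used implicitly in Theorem~\ref{th : cv deter to ODE}). In particular $s\mapsto \E^{-\Gamma(t-s)}\Lg(\psi_s)$ is Lipschitz, so by the same Riemann-sum estimate as in the proof of Theorem~\ref{th : cv deter to ODE},
$$
\psi_{k/n}=\rho_{0}+\frac{1}{n}\sum_{j=0}^{k-1}\E^{-\Gamma(k-j)/n}\Lg(\psi_{j/n})+\Oo\!\left(\tfrac{1}{n}\right),
$$
uniformly in $k\leq n$.

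Third, subtracting these two relations and defining $v_{k}:=\lVert\rhob_{k}-\psi_{k/n}\rVert$, the linearity and boundedness of $\Lg$ (let $K:=\lVert\Lg\rVert$) give the discrete inequality
$$
v_{k}\leq \frac{K}{n}\sum_{j=0}^{k-1}\E^{-\Gamma(k-j)/n}v_{j}+\frac{\alpha_{n}}{n}, \qquad\text{with }\alpha_{n}\xrightarrow[n\uparrow\infty]{}0.
$$
Bounding $\E^{-\Gamma(k-j)/n}\leq 1$ reduces this to the classical form treated by the discrete Grönwall lemma~\cite{clark1987gronwall}, as in the final step of Theorem~\ref{Convergence in E with two Cs} and Theorem~\ref{th : cv deter to ODE}, yielding $v_{k}\leq \frac{\alpha_{n}}{K}(1+\frac{K}{n})^{k}$. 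Taking $k=\lfloor nt\rfloor$ with $t\in[0,1]$ gives $\sup_{t\in[0,1]}v_{\lfloor nt\rfloor}\leq \frac{\alpha_{n}}{K}\E^{K}$, which tends to zero, proving the $L^\infty([0,1])$-convergence.

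The only non-routine step is the first one: one must verify that the $o(1)$-factors appearing multiplicatively against the noise $X_{j+1}$ in~\eqref{eq : incr of rho k memory} are $\Ff_{j}$-measurable and uniformly controlled, so that expectation truly annihilates the stochastic sum and the residual is $\oon$ when accumulated over $k\leq n$ steps. This is exactly what is ensured by the uniform-in-$(k,\omb)$ statement in Proposition~\ref{prop : incr of rho memory}; everything else is a linear perturbation of the argument of Theorem~\ref{th : cv deter to ODE}.
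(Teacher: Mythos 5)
Your proposal is correct and is exactly the argument the paper has in mind: the paper omits this proof, stating only that it is ``similar to those of Theorem~\ref{thm:Convergence_in_E} and Theorem~\ref{th : cv deter to ODE}'', and your combination of (i) taking expectations in~\eqref{eq : incr of rho k memory} so that the $\Ff_j$-measurable coefficients against $X_{j+1}$ annihilate the noise, with (ii) the Riemann-sum and discrete Gr\"onwall estimates from the proof of Theorem~\ref{th : cv deter to ODE}, is precisely that. One cosmetic point: the remainder accumulated over $k\leq n$ steps, each contributing $\oon$, is $o(1)$ rather than $\oon$, but the Gr\"onwall bound $v_k\leq \alpha_n(1+\frac{K}{n})^k\leq \alpha_n\E^{K}$ with $\alpha_n\to 0$ still yields the claimed uniform convergence.
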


Introduce now the auxiliary processes 
$$
V_n(t) := \frac{\lfloor nt \rfloor}{n}
\qquad\text{and}\qquad
W_n(t) := \sum_{j=0}^{\lfloor nt \rfloor -1} \frac{X_{j+1}}{\sqrt{n}},
$$
as well as $\rho_n(t) := \rho_{\lfloor nt \rfloor}(n)$ and $X_n(t) := \E^{\Gamma \frac{\lfloor nt \rfloor}{n}} \rho_n(t)$. Using~\eqref{eq : incr of rho k memory}, we obtain the relation
\begin{equation} \label{relation of X_n}
    X_n(t) = \E^{\Gamma \frac{\lfloor nt \rfloor}{n}} \rho_0 + \eps_n(t) + \int_0^t \Lg(X_n(s)) \D V_n(s) + \int_0^t \E^{\Gamma s} \Theta_{\Cg}\left(\E^{-\Gamma s} X_n(s)\right) \D W_n(s),
\end{equation}
where $\eps_n(t)$ collects the approximation errors.
The introduction of the process~$X_n$ reveals a structure reminiscent of a classical 
(without memory) SDE. 
Observe that $\| \E^{-\Gamma t} X_n(t) \| \leq 1$
for all $n \in \NN$ and $t \in [0,1]$. 
Fix a smooth cut-off function $\Xx \in \Cc^\infty([0,\infty), [0,1])$ such that
$\Xx\big|_{[0,1]} = 1$ and $\Xx\big|_{[2,\infty)} = 0$,
and define a truncated version of the noise coefficient $\Theta_{\Cg}$, namely $\widetilde{\Theta}_{\Cg} : \Ss(\Hh_{0}) \to \Ss(\Hh_{0})$, by
$\widetilde{\Theta}_{\Cg}(\rho) := \Xx(\| \rho \|) \cdot \Theta_{\Cg}(\rho)$,
so that~$X_n$ also satisfies~\eqref{relation of X_n} with~$\Theta_{\Cg}$ replaced by~$\widetilde{\Theta}_{\Cg}$.
The following lemma, whose proof follows that of~\cite[Theorem~5.2.1]{oksendal2003sde}, ensures uniqueness of a limit to the sequence~$(X_n)_n$.
\begin{lemma}\label{lemma : X well def}
The stochastic differential equation
\begin{equation} \label{eq:limit_SDE_X}
X_t = \E^{\Gamma t} \rho_0 + \int_0^t \Lg(X_s) \D s + \int_0^t \E^{\Gamma s} \widetilde{\Theta}(\E^{-\Gamma s} X_s) \D W_s.
\end{equation}
admits a unique strong solution satisfying $\lVert X_t \rVert \leq \E^{\Gamma t}$
for all $t \in [0,1]$.
\end{lemma}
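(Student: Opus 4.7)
The plan is to reduce~\eqref{eq:limit_SDE_X} to a time-autonomous SDE by setting $Y_t := \E^{-\Gamma t}X_t$ and then invoke the standard result~\cite[Theorem~5.2.1]{oksendal2003sde}. A direct Itô computation (the exponential factor is deterministic, so carries no quadratic variation) combined with the linearity of~$\Lg$ transforms~\eqref{eq:limit_SDE_X} into
\begin{equation*}
\D Y_t = \bigl(\Lg(Y_t) + \Gamma(\rho_{0} - Y_t)\bigr)\D t + \widetilde{\Theta}(Y_t)\,\D W_t,\qquad Y_0 = \rho_{0},
\end{equation*}
so that strong existence and uniqueness for~\eqref{eq:limit_SDE_X} is equivalent to the same statement for this autonomous SDE.

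Existence and uniqueness then follow from checking the Lipschitz and linear-growth hypotheses. The drift $Y\mapsto \Lg(Y)+\Gamma(\rho_{0}-Y)$ is affine, hence globally Lipschitz of linear growth on the ambient matrix space. The diffusion $\widetilde{\Theta}(Y)=\Xx(\lVert Y\rVert)\Theta_{\Cg}(Y)$ is the product of the smooth polynomial~$\Theta_{\Cg}$ with a smooth cut-off compactly supported in $\{\lVert Y\rVert\leq 2\}$, hence is globally bounded and globally Lipschitz. Therefore~\cite[Theorem~5.2.1]{oksendal2003sde} produces a unique strong solution~$Y$ on $[0,1]$, and setting $X_t := \E^{\Gamma t}Y_t$ recovers the unique strong solution of~\eqref{eq:limit_SDE_X}.

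For the pathwise bound $\lVert X_t\rVert\leq \E^{\Gamma t}$, equivalently $\lVert Y_t\rVert\leq 1$, the strategy is to confine~$Y$ to the compact convex state space~$\Ss(\Hh_{0})$. Applying Itô to $\mathrm{Tr}[\cdot]$, using trace-preservation of~$\Lg$ together with the explicit identity $\mathrm{Tr}[\Theta_{\Cg}(Y)]=\mathrm{Tr}[Y(\Cg+\Cg^\dagger)](1-\mathrm{Tr}[Y])$, shows that both the drift term $\Gamma(1-\mathrm{Tr}[Y_t])$ and the martingale term vanish on $\{\mathrm{Tr}[Y_t]=1\}$, so $\mathrm{Tr}[Y_t]\equiv 1$ is preserved; self-adjointness is equally preserved since all coefficients map Hermitian matrices to Hermitian matrices. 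Positivity is the delicate step: the transformed SDE is a perturbation of the Belavkin equation~\eqref{eq:Belavkin_equation_og_form} (whose flow preserves~$\Ss(\Hh_{0})$ by Theorem~\ref{thm:SDE}) by the affine drift $\Gamma(\rho_{0}-Y)$, which is inward-pointing towards the interior point~$\rho_{0}\in\Ss(\Hh_{0})$; a standard invariance argument for convex sets then keeps~$Y$ in~$\Ss(\Hh_{0})$, and the operator-norm bound $\lVert Y_t\rVert\leq \mathrm{Tr}[Y_t]=1$ follows.

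I expect this last positivity/invariance step to be the main obstacle: the existence and uniqueness portion is entirely routine once the truncation of~$\widetilde{\Theta}$ and the transformation to~$Y$ are in place, but ruling out that $Y_t$ leaves the state space requires combining the known positivity-preservation of the Belavkin flow with a careful analysis of the extra affine drift near the rank-one boundary of~$\Ss(\Hh_{0})$.
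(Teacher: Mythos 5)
Your reduction to the autonomous equation $\D Y_t = (\Lg(Y_t)+\Gamma(\rho_0-Y_t))\D t + \widetilde{\Theta}(Y_t)\D W_t$ via $Y_t=\E^{-\Gamma t}X_t$ is correct, and the existence/uniqueness part then matches the paper, which simply invokes the Lipschitz/linear-growth theorem of~\cite[Theorem~5.2.1]{oksendal2003sde} (the truncation $\Xx$ is there precisely to make the diffusion coefficient globally Lipschitz, so the time-inhomogeneous form could equally be treated directly). Where you diverge is the bound $\lVert X_t\rVert\leq\E^{\Gamma t}$. The paper does not prove it intrinsically from the SDE: the intended mechanism, used verbatim at the end of the proof of Theorem~\ref{th: cv SDE with alternating U} and signalled by the remark ``$\lVert\E^{-\Gamma t}X_n(t)\rVert\leq 1$'' just before the lemma, is that the discrete approximants satisfy the bound, the constraint set is closed in $\DD([0,1])$, and the bound is therefore inherited by the weak limit, which by uniqueness is the solution of~\eqref{eq:limit_SDE_X}. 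Your route --- proving stochastic invariance of $\Ss(\Hh_{0})$ for the $Y$-equation --- would give a cleaner, approximation-free statement, but it is genuinely harder and is exactly where your argument has a gap.

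Concretely: (i) $\rho_0$ need not be an interior point of $\Ss(\Hh_{0})$ (a pure initial state is an extreme point), so the drift $\Gamma(\rho_0-Y)$ is only tangent-cone valued at the boundary, not strictly inward; (ii) more importantly, there is no ``standard invariance argument for convex sets'' for SDEs that uses only an inward-pointing drift. Stochastic viability of a closed convex set requires in addition that the diffusion coefficient be tangent to the boundary at boundary points (a Nagumo/Stroock--Varadhan type condition), since first-order noise transverse to the boundary cannot be compensated by a bounded drift. Verifying this tangency for $\Theta_{\Cg}$ at rank-deficient states is precisely the content of the positivity-preservation results for the Belavkin equation (Theorem~\ref{thm:SDE}), and it is not automatic that adding the affine drift $\Gamma(\rho_0-Y)$ and the cut-off $\Xx$ preserves the conclusion; your text acknowledges this but leaves it unresolved. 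A second, smaller point: the trace computation you rely on uses $\Tr[\Lg(Y)]=0$, which holds for the standard Lindblad form $\Cg\rho\Cg^\dagger-\frac12\{\Cg^\dagger\Cg,\rho\}$ but should be checked against the anticommutator $\{\Cg\Cg^\dagger,\cdot\}$ as written in~\eqref{eq:master_equation}. If you want to keep your intrinsic approach you must supply the boundary tangency argument; otherwise the shortest correct proof of the bound is the closedness-under-weak-limits argument combined with the uniqueness you have already established.
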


\begin{theorem}\label{th : cv in d of X n}
Under Assumption~\ref{assu:NonDiag}, 
the sequence~$(X_n)$ in~\eqref{relation of X_n}
converges in distribution to the solution of~\eqref{eq:limit_SDE_X}.
\end{theorem}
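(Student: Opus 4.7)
The plan is to apply the Kurtz--Protter-type stability result of Proposition~\ref{prop:KurtzWong} to the semimartingale identity~\eqref{relation of X_n}. A mild complication is that the diffusion integrand $\E^{\Gamma s}\widetilde{\Theta}(\E^{-\Gamma s}\cdot)$ depends explicitly on~$s$, whereas Proposition~\ref{prop:KurtzWong} requires a coefficient depending only on the state. I would handle this by augmenting the state with a time coordinate: set $\widehat{X}_n(t):=(t,X_n(t))$, $\widehat{Y}_n(t):=(t,V_n(t),W_n(t))^\top$ and $\widehat{U}_n(t):=(0,\E^{\Gamma\lfloor nt\rfloor/n}\rho_0+\eps_n(t))$, so that~\eqref{relation of X_n} becomes $\widehat{X}_n(t)=\widehat{U}_n(t)+\int_0^t f(\widehat{X}_n(s^-))\D\widehat{Y}_n(s)$, where $f(t,X)$ is built from $\Lg$, $\widetilde{\Theta}$, and a constant row propagating time. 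The cut-off $\Xx$ baked into $\widetilde{\Theta}$ makes $f$ bounded, and continuity is immediate.

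Next I would establish the joint weak convergence $\widehat{Y}_n\Rightarrow(t,t,B_t)$ with $B$ a standard Brownian motion, together with the \emph{goodness} condition~\eqref{def of goodness}. The deterministic components are immediate, and $W_n\Rightarrow B$ follows by replaying the proof of Proposition~\ref{cv of hat W_n} via Whitt's Theorem~\ref{th:whitt}: the $(\Ff^n)$-martingale property rests on $\EE[X_{k+1}\mid\Ff_k]=0$; the vanishing of jumps uses a uniform bound $\sup_k\EE[X_k^4]<\infty$, which is available since Assumption~\ref{assu:NonDiag} keeps the relevant $\pf_{00}(1-\pf_{00})$ bounded away from zero, so Lemma~\ref{lemma:sup_EE_X_k^4_with_alter_U} applies almost verbatim; and $[W_n,W_n]_t\to t$ in $L^2$ by a direct variance computation. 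Choosing $\tau_n^\alpha\equiv(1+\alpha)/2$ as in the proof of Theorem~\ref{th: cv SDE with alternating U} reduces goodness to uniform $L^1$-boundedness of $[W_n,W_n]_t$ and of the total variation of $V_n$, both trivially bounded by $t$.

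Finally, I would show $\widehat{U}_n\Rightarrow(0,\E^{\Gamma\cdot}\rho_0)$ by proving $\eps_n\to 0$ uniformly in probability: the remainder in~\eqref{eq : incr of rho k memory} is the sum of $n$ deterministic $\oon$ terms and a martingale sum of $o(1)X_{j+1}/\sqrt{n}$ increments; uniformity of these little-$o$'s in $(k,\omb)$, granted by $\|\rho_j\|\le 1$, combined with Doob's $L^2$ inequality, yields $\EE[\sup_{t\le 1}\|\eps_n(t)\|^2]\to 0$. Proposition~\ref{prop:KurtzWong} then gives tightness of $(\widehat{X}_n)$ and identifies any weak limit as a solution to the augmented SDE, with uniqueness supplied by Lemma~\ref{lemma : X well def}; projection on the second coordinate yields the claim. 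The main obstacle I anticipate lies precisely here: the $o(1)$ prefactor multiplying $X_{j+1}/\sqrt{n}$ in~\eqref{eq : incr of rho k memory} is not a fixed continuous coefficient in the sense of Proposition~\ref{prop:KurtzWong}, so one must either absorb it into $\eps_n$ using strong uniformity in $(k,\omb)$, or appeal to an asymptotically-negligible-coefficient extension of the Kurtz--Protter theorem, and verifying that the resulting martingale perturbation vanishes in the Skorokhod topology requires careful $L^2$ estimates that are the most delicate ingredient of the whole argument.
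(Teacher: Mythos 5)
Your proposal is correct and follows essentially the same route as the paper: augmenting the state with a time coordinate to absorb the explicit $s$-dependence of the diffusion coefficient, verifying \emph{goodness} with the constant stopping times $\tau_n^\alpha\equiv\frac{1+\alpha}{2}$, and invoking Proposition~\ref{prop:KurtzWong} together with the uniqueness from Lemma~\ref{lemma : X well def}. The only difference is that the paper outsources the joint convergence $(\eps_n,V_n,W_n)\Rightarrow(0,V,W)$ to \cite[Proposition~4.1]{pellegrini2008existence}, whereas you reprove it via Theorem~\ref{th:whitt} and Doob's inequality --- a more self-contained treatment that also makes explicit the absorption of the $o(1)$ martingale perturbation into $\eps_n$, a point the paper leaves implicit.
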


\begin{proof}
We apply Proposition~\ref{prop:KurtzWong} to the sequence of augmented processes $(\widetilde{X}_n)_n$, where $\widetilde{X}_n(t) = (t, X_n(t))$ for $t \in [0,1]$. 
Let~$V$ denote the deterministic process $V(t) = t$.
Define the processes $U_n$, $Y_n$ and the function $f: \RR \times \Ss(\Hh_{0}) \to (\RR \times \Ss(\Hh_{0}))^2$ as
$U_n(t) = (t, \E^{\Gamma \frac{\lfloor nt \rfloor}{n}} \rho_0 + \eps_n(t))$, 
$Y_n(t) = ( V_n(t), W_n(t))$
and 
$f(t,X) = ( (0, \Lg(X)),   (0,\E^{\Gamma t} \widetilde{\Theta}(\E^{-\Gamma t} X)))$
so that
$\widetilde{X}_n(t) = U_n(t) + \int_0^t f(\widetilde{X}_n(s^-)) \D Y_n(s)$.
The sequence $(Y_n)_n$ is \textit{good} in the sense of Definition~\ref{def of goodness}, for instance by choosing the stopping times $\tau^\alpha_n \equiv \frac{1+\alpha}{2}$ for all $\alpha > 1$ and $n \in \NN$. This follows similarly to the proof of Theorem~\ref{th: cv SDE with alternating U}. 
According to~\cite[Proposition~4.1]{pellegrini2008existence}, $(\eps_n, V_n, W_n)_n$ converges in distribution to $(0, V, W)$ as~$n$ tends to infinity, where~$W$ is a Brownian motion.
Consequently, we obtain the convergence
$(U_n, Y_n) \Rightarrow (U, Y)$, where
$U(t) = (t, \E^{\Gamma t} \rho_0),\; Y = \left(V, W \right)$.
Finally, since $\|X_n(t)\| \leq \E^{\Gamma t}$ for all $n \in \NN$ and $t \in [0,1]$, and since $\widetilde{\Theta}$ is truncated and continuous on the ball of radius $1$, the function $f$ is bounded and continuous on compact subsets. Therefore, the assumptions of Proposition~\ref{prop:KurtzWong} are satisfied, and the sequence~$(X_n)_n$ converges in distribution to the solution of~\eqref{eq:limit_SDE_X}.
\end{proof}

We now show the corresponding convergence
of the original sequence~$(\rho_n)_n$.

\begin{theorem}\label{thm : cv in d memory}
Under Assumption~\ref{assu:NonDiag}, 
for any $\rho_0 \in \Ss(\Hh_0)$,
there exists a Brownian motion~$W$ such that
$(\rho_n)_n$ 
converges in distribution in $\DD([0,1]$ to the unique strong solution of the Volterra SDE
\begin{equation}\label{limiting SDE rho memory}
\rho_t = \rho_0 + \int_0^t \E^{-\Gamma(t-s)} \Lg(\rho_s) \D s + \int_0^t \E^{-\Gamma(t-s)} \Theta_{\Cg}(\rho_s) \D W_s.
\end{equation}
\end{theorem}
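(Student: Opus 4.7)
The plan is to transfer the weak convergence of $X_n$, established in Theorem~\ref{th : cv in d of X n}, back to the sequence $(\rho_n)_n$ via the deterministic rescaling $\rho_t = \E^{-\Gamma t} X_t$, and then to verify that the resulting limit satisfies the Volterra SDE~\eqref{limiting SDE rho memory}. By definition of $X_n$ we have $\rho_n(t) = \E^{-\Gamma \lfloor nt\rfloor/n} X_n(t)$, and since $\rho_n(t)\in\Ss(\Hh_0)$ has norm bounded by one, the processes $X_n$ are uniformly bounded by $\E^{\Gamma}$ on $[0,1]$. Combining this with the elementary estimate $\sup_{t\in[0,1]} |\E^{-\Gamma \lfloor nt\rfloor/n} - \E^{-\Gamma t}| = \Oo(\frac{1}{n})$ yields $\|\rho_n - \E^{-\Gamma \cdot} X_n\|_\infty = \Oo(\frac{1}{n})$ almost surely, so a Slutsky-type argument reduces the problem to establishing the weak convergence of $(\E^{-\Gamma t} X_n(t))_t$ to $(\E^{-\Gamma t} X_t)_t$. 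Since the limit $X$ is continuous and the map $Y \mapsto (\E^{-\Gamma t} Y(t))_t$ is continuous on the Skorokhod space (multiplication by a bounded, continuous, deterministic function preserves Skorokhod convergence), the continuous mapping theorem delivers the desired convergence of $(\rho_n)_n$ in $\DD([0,1],\Ss(\Hh_0))$.

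Next, set $\rho_t := \E^{-\Gamma t} X_t$, with $X$ the unique strong solution of~\eqref{eq:limit_SDE_X}. Multiplying the integral equation for $X$ by $\E^{-\Gamma t}$, and using linearity of the Lindblad generator $\Lg$ to absorb the exponential factor in the drift, namely $\E^{-\Gamma t} \Lg(X_s) = \E^{-\Gamma(t-s)} \Lg(\rho_s)$, together with the cancellation $\E^{-\Gamma t}\cdot \E^{\Gamma s} = \E^{-\Gamma(t-s)}$ in the stochastic integrand, I obtain
$$
\rho_t = \rho_0 + \int_0^t \E^{-\Gamma(t-s)}\Lg(\rho_s)\D s + \int_0^t \E^{-\Gamma(t-s)}\widetilde{\Theta}_{\Cg}(\rho_s)\D W_s.
$$
The a priori bound $\|X_t\| \leq \E^{\Gamma t}$ from Lemma~\ref{lemma : X well def} translates into the pathwise bound $\|\rho_t\| \leq 1$; since the cut-off $\Xx$ is identically one on the unit ball, $\widetilde{\Theta}_{\Cg}(\rho_s)$ may be replaced by $\Theta_{\Cg}(\rho_s)$, and $\rho$ indeed solves~\eqref{limiting SDE rho memory}.

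Uniqueness of solutions of~\eqref{limiting SDE rho memory} is inherited directly from the uniqueness statement of Lemma~\ref{lemma : X well def}: if $\rho,\rho'$ both solve~\eqref{limiting SDE rho memory} relative to the same Brownian motion, then $\E^{\Gamma t}\rho_t$ and $\E^{\Gamma t}\rho'_t$ both satisfy~\eqref{eq:limit_SDE_X}, and must therefore coincide. I expect the main conceptual obstacle to have already been absorbed into Theorem~\ref{th : cv in d of X n}; the remaining effort is essentially deterministic bookkeeping to pass from the auxiliary process $X_n$ to $\rho_n$ while recovering the Volterra convolution structure. The only subtle point is the uniformity of the error $\|\rho_n - \E^{-\Gamma \cdot} X_n\|_\infty$, which however is immediate from the uniform boundedness $\|\rho_n(t)\| \le 1$ and the smoothness of $t\mapsto \E^{-\Gamma t}$.
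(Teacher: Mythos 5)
Your proof is correct and follows essentially the same route as the paper: both transfer the convergence of $X_n$ from Theorem~\ref{th : cv in d of X n} to $(\rho_n)_n$ by combining the continuity on $\DD([0,1])$ of multiplication by the deterministic continuous function $t\mapsto \E^{-\Gamma t}$ with a uniform $\Oo(\frac{1}{n})$ bound on the discrepancy between $\E^{-\Gamma \lfloor nt\rfloor/n}$ and $\E^{-\Gamma t}$ (you apply these two steps in the opposite order, which changes nothing). You additionally spell out the verification that $\rho_t=\E^{-\Gamma t}X_t$ satisfies~\eqref{limiting SDE rho memory} and that the truncation $\widetilde{\Theta}_{\Cg}$ can be dropped on the unit ball, details the paper leaves implicit.
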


\begin{proof}
Existence and uniqueness of~$\rho$ follow from the fact that $\rho_t = \E^{-\Gamma t} X_t$, where~$X$ is the unique strong solution to~\eqref{eq:limit_SDE_X}.
Theorem~\ref{th : cv in d of X n} establishes that the sequence of processes $\left(t \mapsto \rho_n(t)\E^{\Gamma \frac{\lfloor nt \rfloor}{n}}\right)_n$ converges in distribution to~$X$. We now show that the mapping 
$\Phi : g \mapsto \left(t \mapsto \E^{-\Gamma t} g(t)\right)$
is continuous on $\DD([0,1])$.
Let $(x_n)_n \in (\DD([0,1]))^\NN$ and $x \in \DD([0,1])$ such that $(x_n)$ converges to $x$ in $\DD([0,1])$. Then there exists a sequence of increasing continuous functions $(\theta_n)_n$ from $[0,1]$ to $[0,1]$ such that
$$
\lim_{n \uparrow\infty} \sup_{t \in [0,1]} \|x_n(\theta_n(t)) - x(t)\| = 0
    \qquad \text{and} \qquad \lim_{n \uparrow\infty} \sup_{t \in [0,1]} |\theta_n(t) - t| = 0.
$$
For each $n$ and $t \in [0,1]$, we estimate
\begin{align*}
\left\| \E^{-\Gamma \theta_n(t)} x_n(\theta_n(t)) - \E^{-\Gamma t} x(t) \right\| 
&\leq \left\| x_n(\theta_n(t)) - x(t) \right\| + \|x\|_\infty \left| \E^{-\Gamma \theta_n(t)} - \E^{-\Gamma t} \right| \\
&\leq \left\| x_n(\theta_n(t)) - x(t) \right\| + \Gamma \|x\|_\infty |\theta_n(t) - t|.
\end{align*}
This implies that $\E^{-\Gamma \cdot} x_n(\cdot) \to \E^{-\Gamma \cdot} x(\cdot)$ in $\DD([0,1])$, hence $\Phi$ is continuous.
Applying~$\Phi$ to the sequence $t \mapsto \rho_n(t) \E^{\Gamma \left(\frac{\lfloor nt \rfloor}{n}\right)}$, we deduce that
$\left( t \mapsto \rho_n(t) \E^{\Gamma \left(\frac{\lfloor nt \rfloor}{n} - t \right)} \right)_n \Rightarrow \rho$ 
in $\DD([0,1])$.
Finally, observe that for each $n\geq 1$ and $t \in [0,1]$, using $\|\rho_n(t)\| \leq 1$, we have
    \[
    \left\| \rho_n(t) \left( \E^{\Gamma \left(\frac{\lfloor nt \rfloor}{n} - t \right)} - 1 \right) \right\| \leq \Gamma \left(t - \frac{\lfloor nt \rfloor}{n} \right) \leq \frac{\Gamma}{n}.
    \]
Therefore $\rho_n(t) \E^{\Gamma \left(\frac{\lfloor nt \rfloor}{n} - t \right)}$ and $\rho_n(t)$ are uniformly close, and thus the sequence $(\rho_n)_n$ also converges in distribution to~$\rho$.
For all $t\in[0,1]$, $\rho_t\in\Ss(\Hh_0)$ because $\rho_n(t)\in\Ss(\Hh_0)$.
\end{proof}

\bibliographystyle{siam}
\bibliography{Biblio}

\appendix

\section{Proofs of Section ~\ref{sec:Background}}\label{proofs of section 1}
\subsection{Proof of Proposition~\ref{prop:corresponding_Hamiltonian}}\label{proof of corresp hamiltonian}

The proposition follows from the next two lemmas:

\begin{lemma}\label{lemma : matrix exponential}
For any $X,Y,Z\in \Mm_d(\CC)$, define the  maps $\phi_X$ and $\psi_X$ on $\Mm_d(\CC)$ by 
$$
\phi_X(Y) := \int_0^1 \E^{-sX} Y \E^{sX}\D s 
\qquad\text{and}\qquad \psi_X(Y) := \int_0^1\int_0^s \E^{-sX} Y \E^{(s-r)X} Y \E^{rX}\D r\D s.
$$
Then, as $\eps$ tends to zero,
$$
\exp\Big\{X+\eps Y +\eps^2 Z\Big\} =\E^X\left[\Ig+\eps  \phi_X(Y)+\eps^2\Big(\phi_X(Z)+\psi_X(Y)\Big)\right] + \Oo(\eps^3).
$$
\end{lemma}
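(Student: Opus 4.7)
I would treat $\exp\{X+\eps Y +\eps^2 Z\}$ as the value at $t=1$ of the matrix-valued flow $U(t,\eps) := \exp\{t(X+\eps Y+\eps^2 Z)\}$ and extract the free part $\E^{tX}$ by setting $V(t,\eps) := \E^{-tX} U(t,\eps)$. Differentiating in~$t$ gives the linear non-autonomous ODE
$$
\partial_t V(t,\eps) = B(t,\eps) V(t,\eps), \qquad V(0,\eps) = \Ig,
\qquad \text{with } B(t,\eps) := \E^{-tX}(\eps Y+\eps^2 Z)\E^{tX}.
$$

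The next step is to iterate the fixed-point equation $V(t,\eps) = \Ig + \int_0^t B(s,\eps) V(s,\eps)\D s$ into the usual Dyson series. Since $\|B(t,\eps)\|=\Oo(\eps)$ uniformly on $[0,1]$, keeping only the first three terms yields
$$
V(1,\eps) = \Ig + \int_0^1 B(s,\eps)\D s + \int_0^1\!\int_0^{s} B(s,\eps) B(r,\eps)\D r\D s + \Oo(\eps^3).
$$
Substituting the explicit form of~$B$ and using the identity $\E^{sX}\E^{-rX}=\E^{(s-r)X}$ (valid because $X$ commutes with itself), the single integral produces $\eps\,\phi_X(Y) + \eps^2\,\phi_X(Z)$, while the leading $\eps^2$-part of the iterated integral is exactly $\eps^2\,\psi_X(Y)$. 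Multiplying by $\E^X$ through $U(1,\eps)=\E^X V(1,\eps)$ then yields the stated expansion.

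The only technical point is the uniform control of the remainder. Every tail term of the Dyson series is dominated by a summand of $\sum_{k\geq 3} \frac{(\|B\|_\infty)^k}{k!}$, with $\|B\|_\infty \leq \E^{2\|X\|}(\eps\|Y\|+\eps^2\|Z\|)$, which is manifestly $\Oo(\eps^3)$ with constants depending only on the operator norms of $X$, $Y$ and $Z$; a Gr\"onwall bound applied to the integral equation additionally gives $\|V(s,\eps)\|\leq \exp(\|B\|_\infty)$, uniform in $s\in[0,1]$ and~$\eps$ small. I do not anticipate any genuine obstacle, and the only point worth attention is to keep the double integral in its time-ordered form: this is what produces the correct coefficient $\phi_X(Z)+\psi_X(Y)$ rather than a symmetric product of $\phi_X(Y)$ with itself.
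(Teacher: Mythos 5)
Your proof is correct, but it takes a genuinely different route from the paper. You pass to the interaction picture, writing $V(t,\eps)=\E^{-tX}\exp\{t(X+\eps Y+\eps^2 Z)\}$, and read off the expansion from the first three terms of the Dyson series of $\partial_t V=B(t,\eps)V$; the time-ordered double integral of $B(s,\eps)B(r,\eps)$ over $\{0\le r\le s\le 1\}$ is indeed exactly $\eps^2\psi_X(Y)+\Oo(\eps^3)$, and your remark that the time-ordering (rather than a symmetrised square of $\phi_X(Y)$) is what produces the right coefficient is the key point. The paper instead expands the exponential directly as a power series in $X$, $Y$, $Z$, collects the words $X^kYX^j$ and $X^kYX^jYX^l$ with coefficients $\frac{1}{(k+j+1)!}$ and $\frac{1}{(k+j+l+2)!}$, and matches these against $\E^X\phi_X$ and $\E^X\psi_X$ via the Beta-type identities $\int_0^1(1-s)^ks^j\,\D s=\frac{k!j!}{(k+j+1)!}$ and $\int_0^1\!\int_0^s(1-s)^k(s-r)^jr^l\,\D r\,\D s=\frac{k!j!l!}{(k+j+l+2)!}$. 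The two arguments are equivalent in content (the Beta identities are precisely what you get by expanding the exponentials inside your time-ordered integrals), but yours has a cleaner remainder estimate --- the tail of the Dyson series is dominated by $\sum_{k\ge 3}\|B\|_\infty^k/k!=\Oo(\eps^3)$ with explicit constants --- and extends without change to higher orders or time-dependent perturbations, whereas the paper's version is purely algebraic and self-contained but needs the combinatorial integral evaluations at each order.
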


\begin{lemma}\label{lem:invertphi_{iD}}
    Let $D$ be a Hermitian matrix and $\lambda_1,\dots,\lambda_d$ its eigenvalues.
    If for all $k\neq l$ $\lambda_k-\lambda_l \notin 2\pi \ZZ\setminus \{0\}$,
    then~$\phi_{\I D}$ defined in Lemma~\ref{lemma : matrix exponential} is invertible.
\end{lemma}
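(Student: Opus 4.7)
The plan is to diagonalise $D$ and exploit the fact that $\phi_{\I D}$, expressed in the eigenbasis of $D$, becomes a diagonal operator on the matrix entries, so that invertibility reduces to showing that none of the diagonal coefficients vanishes.

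First I would use the spectral theorem for the Hermitian matrix~$D$ to write $D=V\Lambda V^\dagger$ with $\Lambda=\mathrm{diag}(\lambda_1,\ldots,\lambda_d)$ and $V$ unitary. Since $Y\mapsto V^\dagger Y V$ is a linear isomorphism on $\Mm_d(\CC)$, the operator~$\phi_{\I D}$ is invertible if and only if the conjugated operator $\Psi\colon Y\mapsto V^\dagger \phi_{\I D}(VYV^\dagger)V$ is invertible. Because $\E^{-s\I D}=V\E^{-s\I\Lambda}V^\dagger$, a direct computation yields
\begin{equation*}
\Psi(Y)_{kl} = \int_0^1 \E^{-s\I\lambda_k} Y_{kl}\E^{s\I\lambda_l}\D s
= c_{kl}\,Y_{kl},
\qquad\text{with }
c_{kl} := \int_0^1 \E^{\I s(\lambda_l-\lambda_k)}\D s.
\end{equation*}
Hence $\Psi$ acts as a Hadamard-type diagonal operator on the matrix entries.

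Next I would check that $c_{kl}\neq 0$ for all indices $k,l$. If $\lambda_k=\lambda_l$ (in particular when $k=l$), then $c_{kl}=1$. Otherwise, evaluating the integral,
\begin{equation*}
c_{kl} = \frac{\E^{\I(\lambda_l-\lambda_k)}-1}{\I(\lambda_l-\lambda_k)},
\end{equation*}
which vanishes exactly when $\lambda_l-\lambda_k\in 2\pi\ZZ\setminus\{0\}$. By assumption this cannot happen when $k\neq l$, so $c_{kl}\neq 0$ in every case.

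Therefore $\Psi$ has a diagonal representation on the entries with non-zero coefficients $c_{kl}$, making it a bijection of $\Mm_d(\CC)$ with explicit inverse $\Psi^{-1}(Y)_{kl}=c_{kl}^{-1}Y_{kl}$. Conjugating back, $\phi_{\I D}$ is invertible, which concludes the proof. No step is genuinely difficult here: the only subtlety is the bookkeeping between the index convention ($\lambda_l-\lambda_k$ versus $\lambda_k-\lambda_l$), which is symmetric under swapping $k\leftrightarrow l$ in the hypothesis and thus harmless.
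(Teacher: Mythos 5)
Your proposal is correct and follows essentially the same route as the paper: diagonalise $D$ via the spectral theorem, conjugate so that $\phi_{\I D}$ becomes a Hadamard (entrywise) multiplication by the coefficients $c_{kl}=\int_0^1 \E^{\I s(\lambda_l-\lambda_k)}\D s$, and check these are non-zero under the stated hypothesis. Nothing to add.
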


\begin{proof}[Proof of Lemma~\ref{lemma : matrix exponential}]
Since the exponential map has an infinite radius of convergence, then 
$$
\E^{X+\eps Y +\eps^2 Z} = 
 \E^{X} + \eps\sum_{k,j=0}^\infty\frac{X^kYX^j}{(k+j+1)!} + \eps^2\left[\sum_{k,j=0}^\infty\frac{X^kZX^j}{(k+j+1)!} + \sum_{k,j,l=0}^\infty\frac{X^kYX^jYX^l}{(k+j+l+2)!}\right] + \Oo(\eps^3).
$$
Now,
$$
\E^X\phi_X(Y)
 = \sum_{k,j=0}^\infty \int_0^1 \frac{(1-s)^ks^j}{k!j!}\D s X^kYX^j
= \sum_{k,j=0}^\infty\frac{X^kYX^j}{(k+j+1)!}.
$$
Furthermore, $\int_0^1 \int_0^s (1-s)^k(s-r)^jr^l \D r \D s=\frac{k!j!l!}{(k+j+l+2)!}$ for all $k,j,l\in \NN$, and therefore
$$
\E^X\psi_X(Y)
 = \sum_{k,j,l=0}^\infty \int_0^1\int_0^s (1-s)^k(s-r)^jr^l \D r \D s X^kYX^kYX^j
 = \sum_{k,j,l=0}^\infty \frac{X^k Y X^j Y X^l}{(k+j+l+2)!}.
$$
\end{proof}

\begin{proof}[Proof of Lemma~\ref{lem:invertphi_{iD}}]
Since~$D$ is Hermitian, then there exists a unitary~$V$ such that $D=V\Lambda V^\dagger$ with $\Lambda=\text{Diag}(\lambda_1,\dots,\lambda_d)$. 
Given $Y \in \Mm_d(\CC)$ and $\widetilde{Y} := V^\dagger Y V$, we can write
$$
\phi_{\I D}(Y) = V\left(\int_0^1 \E^{-\I s\Lambda}\widetilde{Y}\E^{\I s\Lambda}\D s\right)V^\dagger.
$$ 
For all $k,l\in \{1,\dots,d\}$, we define $\gamma_{kl}$ as
$$\gamma_{kl}:=\int^1_0\E^{\I s(\lambda_l-\lambda_k)}\D s=\begin{cases}
    \displaystyle \frac{\E^{\I(\lambda_l-\lambda_k)}-1}{\I (\lambda_l-\lambda_k)}, &\text{if } \lambda_k\neq \lambda_l,\\
    1, &\text{if } \lambda_k=\lambda_l.
\end{cases}
$$
Therefore $\phi_{\I D}=\operatorname{Conj}(V) \circ H(\gamma) \circ \operatorname{Conj}(V^\dagger)$ with $\operatorname{Conj}(V)$ the conjugation by~$V$ and~$H(\gamma)$ the Hadamard multiplication operator associated with $\gamma$. 
Hence~$\phi_{\I D}$ is invertible if and only if $H(\gamma)$ is. $H(\gamma)$ is invertible if $\gamma_{kl}\neq 0$ for all $k,l$, that is $\lambda_l-\lambda_k \notin2\pi\ZZ\setminus\{0\}$ for $k\neq l$.
\end{proof}

\subsection{Proof of Proposition~\ref{prop:increment_of_rho_k}}\label{proof:prop__increment_of_rho_k}
Let $\Pg_0=(\pf_{ij})$ and $\Pg_1=(\qf_{ij})$ be the orthogonal projectors from Assumption~\ref{assu:NonDiag}.
Using~\eqref{eq:L00}, we can write
\begin{equation*}
\begin{array}{r@{\;}lcr@{\;}l}
\Ug_{00} \rho_{k} \Ug_{00}^\dagger
 & = \displaystyle \rho_{k}-\frac{\I}{n}[\Hg_{0},\rho_{k}]-\frac{1}{2n}\{\Cg \Cg^\dagger,\rho_{k}\} 
 + \oon, 
 & &
 \Ug_{00}\rho_{k}\Ug_{10}^\dagger & = \displaystyle \frac{1}{\sqrt{n}}\rho_{k} \Cg^\dagger + \oon,\\
\Ug_{10}\rho_{k}\Ug_{00} & \displaystyle = \frac{1}{\sqrt{n}}\Cg\rho_{k} + \oon,
& &
 \Ug_{10}\rho_{k}\Ug_{10}^\dagger & = \displaystyle \frac{1}{n}\Cg\rho_{k} \Cg^\dagger + \oon.
\end{array}
\end{equation*}

Recall from~\eqref{eq:discrete_evolution} that
$\displaystyle
\rho_{k+1} = \rhokz + \rhoko - \left(\sqrt{\frac{q_{k+1}}{p_{k+1}}} \rhokz - \sqrt{\frac{p_{k+1}}{q_{k+1}}} \rhoko \right) X_{k+1}$.
Concentrating first on the first two terms, we have, from~\eqref{eq : og mu k+1},
\begin{align*}
\rho^0_k &= \sum_{i,j=0}^1 \pf_{ji}\Ug_{i0}\rho_{k} \Ug_{j0}^\dagger\\
&= \pf_{00}\rho_{k}  + \frac{\pf_{10}\rho_{k}\Cg^\dagger + \pf_{01}\Cg\rho_{k}}{\sqrt{n}}
+ \frac{\pf_{11}\Cg\rho_k \Cg^\dagger-\pf_{00}(\I[\Hg_{0}, \rho_k] + \frac{1}{2} 
\{\Cg \Cg^\dagger, \rho_k \})}{n}
+ \oon.
\end{align*}

Similarly,
\begin{align*}
\rho^1_k &= \sum_{i,j=0}^1 \qf_{ji}\Ug_{i0}\rho_{k} \Ug_{j0}^\dagger\\
&= \qf_{00}\rho_{k}  + \frac{\qf_{10}\rho_{k}\Cg^\dagger + \qf_{01}\Cg\rho_{k}}{\sqrt{n}}
+ \frac{\qf_{11}\Cg\rho_k \Cg^\dagger - \qf_{00}(\I[\Hg_{0}, \rho_k] + \frac{1}{2} 
\{\Cg \Cg^\dagger, \rho_k \})}{n} + \oon
\end{align*}

Since~$\Pg_0$ and~$\Pg_1$ are the two orthogonal projectors, then
$\pf_{00} + \qf_{00} = \pf_{11} + \qf_{11} = 1$ and 
$\pf_{01} + \qf_{0`} = \pf_{01} + \qf_{01} = 0$. 
Therefore
\begin{align*}
\rho^0_k + \rho^1_k
 & = \pf_{00}\rho_{k}  + \frac{\pf_{10}\rho_{k}\Cg^\dagger + \pf_{01}\Cg\rho_{k}}{\sqrt{n}}
+ \frac{\pf_{11}\Cg\rho_k \Cg^\dagger - \pf_{00}(\I[\Hg_{0}, \rho_k] + \frac{1}{2} 
\{\Cg \Cg^\dagger, \rho_k \})}{n}\\
 & \quad + \qf_{00}\rho_{k}  + \frac{\qf_{10}\rho_{k}\Cg^\dagger + \qf_{01}\Cg\rho_{k}}{\sqrt{n}}
+ \frac{\qf_{11}\Cg\rho_k \Cg^\dagger - \qf_{00}(\I[\Hg_{0}, \rho_k] + \frac{1}{2} 
\{\Cg \Cg^\dagger, \rho_k \})}{n} + \oon\\
 & = 
 (\pf_{00} + \qf_{00})\rho_{k}
 + \frac{(\pf_{10}+\qf_{10})\rho_{k}\Cg^\dagger + (\pf_{01}+\qf_{01})\Cg\rho_{k}}{\sqrt{n}}\\
  & \quad + 
 \frac{(\pf_{11}+\qf_{11})\Cg\rho_k \Cg^\dagger - (\pf_{00}+\qf_{00})(\I[\Hg_{0}, \rho_k] + \frac{1}{2} 
\{\Cg \Cg^\dagger, \rho_k \})}{n} + \oon\\
 & = 
 \rho_{k}
 + 
 \frac{\Cg\rho_k \Cg^\dagger - (\I[\Hg_{0}, \rho_k] + \frac{1}{2} 
\{\Cg \Cg^\dagger, \rho_k \})}{n} + \oon 
 =\rho_{k}+\frac{1}{n}\Lg(\rho_k)
 + \oon,
\end{align*}
with~$\Lg(\cdot)$ as in the proposition.
Regarding the terms $p_{k+1} = \Tr[\rhokz]$ and   
$q_{k+1} =\Tr[\rhoko]$,
\begin{equation}\label{eq : pk+1 and qk+1 with U alter}
\left\{
\begin{array}{rl}
p_{k+1} & = \displaystyle  \pf_{00}+\frac{1}{\sqrt{n}}\Tr\left[\pf_{10}\rho_{k}\Cg^\dagger + \pf_{01} \Cg \rho_{k}\right] +\oonsq, \\
q_{k+1} & = \displaystyle \qf_{00}+\frac{1}{\sqrt{n}}\Tr\left[\qf_{10}\rho_{k}\Cg^\dagger + \qf_{01}\Cg \rho_k\right] +\oonsq.
\end{array}
\right.
\end{equation}

Consider now the term
$-\sqrt{\frac{q_{k+1}}{p_{k+1}}} \rhokz + \sqrt{\frac{p_{k+1}}{q_{k+1}}} \rhoko$ in~\eqref{eq:discrete_evolution}.
From the expansions above, 
\begin{align*}
\sqrt{\frac{q_{k+1}}{p_{k+1}}}
 &  = \alpha \left\{1+\frac{1}{2\sqrt{n}}\left(\frac{q}{\qf_{00}} -\frac{p}{\pf_{00}}\right) + \oonsq\right\},\\
 \sqrt{\frac{p_{k+1}}{q_{k+1}}}
 & = \frac{1}{\alpha} \left\{1+\frac{1}{2\sqrt{n}}\left(\frac{p}{\pf_{00}} - \frac{q}{\qf_{00}}\right) + \oonsq\right\},
\end{align*}
with $p:=\Tr[\rho_k(\pf_{10}\Cg^\dagger + \pf_{01}\Cg)]$,
$q:=\Tr[\rho_k(\qf_{10}\Cg^\dagger + \qf_{01}\Cg)]$,
$\alpha:=\sqrt{\frac{\qf_{00}}{\pf_{00}}}$.
Then
\begin{align*}
\zeta := & -\sqrt{\frac{q_{k+1}}{p_{k+1}}} \rhokz + \sqrt{\frac{p_{k+1}}{q_{k+1}}} \rhoko\\
= &-\alpha\left[1+\frac{1}{2\sqrt{n}}\left(\frac{q}{\qf_{00}} - \frac{p}{\pf_{00}}\right)+\oonsq\right]
\left[\pf_{00}\rho_{k} +\frac{1}{\sqrt{n}}(\pf_{10}\rho_{k}\Cg^\dagger+\pf_{01}\Cg\rho_{k})+\oonsq\right]\\
 & + \frac{1}{\alpha} \left[1+\frac{1}{2\sqrt{n}}\left(\frac{p}{\pf_{00}} - \frac{q}{\qf_{00}}\right) + \oonsq\right]
\left[\qf_{00}\rho_{k} +\frac{1}{\sqrt{n}}(\qf_{10}\rho_{k}\Cg^\dagger+\qf_{01}\Cg\rho_{k})+\oonsq\right]\\
 & = \frac{1}{\sqrt{n}}
\left[\left(\alpha p -\frac{q}{\alpha}\right)\rho_{k} + \left(\frac{\qf_{10}}{\alpha}-\alpha \pf_{10}\right)\rho_{k}\Cg^\dagger+\left(\frac{\qf_{01}}{\alpha} - \alpha \pf_{01}\right)\Cg\rho_{k}\right] +\oonsq.
\end{align*}
Since $\alpha p - \frac{q}{\alpha}=\Tr[\rho_{k}[(\alpha \pf_{10}-\frac{\qf_{10}}{\alpha})\Cg^\dagger +(\alpha \pf_{01}-\frac{\qf_{01}}{\alpha})\Cg]]$, $\alpha\ne 0$ and,
with $\gamma=\frac{\qf_{01}}{\alpha}-\alpha \pf_{01}$,
$$
\zeta = \frac{1}{\sqrt{n}}
\Big\{\rho_{k} \overline{\gamma}\Cg^\dagger +\gamma \Cg \rho_{k} -\Tr\left[\rho_{k}(\overline{\gamma}\Cg^\dagger + \gamma \Cg)\right]\rho_{k}+o(1)\Big\}.
$$


\end{document}